\newcommand{\citep}{\autocite}
\newcommand{\citet}{\textcite}
\DeclareMathOperator*{\argmin}{argmin}
\DeclareMathOperator{\var}{Var}
\newcommand{\MTkillspecial}[1]{
	\begingroup%
	\catcode`\&=9%
	\let\\\relax%
	\scantokens{#1}%
	\endgroup%
}
\newcommand{\MTemptyplaceholder}{\:\cdot\:}
\DeclarePairedDelimiter\abs\lvert\rvert
\reDeclarePairedDelimiterInnerWrapper\abs{star}{%
	\mathopen{#1\vphantom{\MTkillspecial{#2}}\kern-\nulldelimiterspace\right.}%
	\ifblank{#2}{\MTemptyplaceholder}{#2}%
	\mathclose{\left.\kern-\nulldelimiterspace\vphantom{\MTkillspecial{#2}}#3}%
}
\DeclarePairedDelimiterXPP\iprodWrapper[2]{}{\langle}{\rangle}{}{
	\ifblank{#1}{\MTemptyplaceholder}{#1},
	\ifblank{#2}{\MTemptyplaceholder}{#2}
}
\NewDocumentCommand\iprod{ s o m m }{
	\IfBooleanTF {#1}
	{ \iprodWrapper*{#3}{#4} }
	{ \IfNoValueTF{#2}
		{
			\iprodWrapper{#3}{#4} 
		}
		{
			\iprodWrapper[#2]{#3}{#4}
		}
	}
}
\DeclarePairedDelimiterXPP\normWrapper[1]{}\lVert\rVert{}{\ifblank{#1}{\MTemptyplaceholder}{#1}}
\NewDocumentCommand\norm{ s o m }{
	\IfBooleanTF {#1}
	{ \normWrapper*{#3}}
	{ \IfNoValueTF{#2}
		{
			\normWrapper{#3}
		}
		{
			\normWrapper[#2]{#3}
		}
	}
}
\newcommand{\tnorm}[1]{{\left\vert\kern-0.25ex\left\vert\kern-0.25ex\left\vert #1 
    \right\vert\kern-0.25ex\right\vert\kern-0.25ex\right\vert}}
\providecommand\given{}
\DeclarePairedDelimiterX\Set[1]\{\}{%
	\renewcommand\given{\SetSymbol[\delimsize]}
	#1
}
\DeclarePairedDelimiterXPP\ProbWrapper[2]{#1}(){}{
	\renewcommand\given{\nonscript\:\delimsize\vert\nonscript\:\mathopen{}}
\NewDocumentCommand\prob{ s O{} O{} O{} m }{
	\ifblank {#5}{\mathrm{P}_{#2}^{#3}}
		{\IfBooleanTF {#1}
			{ \ProbWrapper*{\mathrm{P}_{#2}^{#3}}{#5} }
			{ \IfNoValueTF{#4}
				{
					\ProbWrapper{\mathrm{P}_{#2}^{#3}}{#5}
				}
				{
					\ProbWrapper[#4]{\mathrm{P}_{#2}^{#3}}{#5}
				}
			}
		}
}
\NewDocumentCommand\qrob{ s O{} O{} O{} m }{
	\ifblank {#5}{\mathrm{Q}_{#2}^{#3}}
		{\IfBooleanTF {#1}
			{ \ProbWrapper*{\mathrm{Q}_{#2}^{#3}}{#5} }
			{ \IfNoValueTF{#4}
				{
					\ProbWrapper{\mathrm{Q}_{#2}^{#3}}{#5}
				}
				{
					\ProbWrapper[#4]{\mathrm{Q}_{#2}^{#3}}{#5}
				}
			}
		}
}
\DeclarePairedDelimiterXPP\EVWrapper[2]{#1}[]{}{
	\renewcommand\given{\mathrel{}\mathclose{}\delimsize\vert\mathopen{}\mathrel{}}
	#2
}
\NewDocumentCommand\EV{ s O{} O{} O{} m }{
	\ifblank {#5}{\mathrm{E}_{#2}^{#3}}
		{\IfBooleanTF {#1}
			{ \EVWrapper*{\mathrm{E}_{#2}^{#3}}{#5} }
			{ \IfNoValueTF{#4}
				{
					\EVWrapper{\mathrm{E}_{#2}^{#3}}{#5}
				}
				{
					\EVWrapper[#4]{\mathrm{E}_{#2}^{#3}}{#5}
				}
			}
		}
}
\renewcommand{\underbar}[1]{\underaccent{\bar}{#1}}
\providecommand{\transpose}{^{\intercal}}
\newcommand{\inv}{^{{\mathsmaller{-1}}}}
\newcommand{\identity}{\mathbb{I}}		
\newcommand{\indicator}{\mathbb{1}}		
\providecommand*{\bigcupdot}{%
	\mathop{%
		\vphantom{\bigcup}%
		\mathpalette\@bigcupdot{}%
	}%
}
\newcommand*{\@bigcupdot}[2]{%
	\ooalign{%
		$\m@th#1\bigcup$\cr
		\sbox0{$#1\bigcup$}%
		\dimen@=\ht0 %
		\advance\dimen@ by -\dp0 %
		\sbox0{\scalebox{2}{$\m@th#1\cdot$}}%
		\advance\dimen@ by -\ht0 %
		\dimen@=.5\dimen@
		\hidewidth\raise\dimen@\box0\hidewidth
	}%
}
\newcommand{\D}{\ensuremath{\mathrm{d}}}
\newcommand{\semigroup}{\mathcal{S}}
\let\Pr=P
\newcommand{\subalign}[1]{%
	\vcenter{%
		\Let@ \restore@math@cr \default@tag
		\baselineskip\fontdimen10 \scriptfont\tw@
		\advance\baselineskip\fontdimen12 \scriptfont\tw@
		\lineskip\thr@@\fontdimen8 \scriptfont\thr@@
		\lineskiplimit\lineskip
		\ialign{\hfil$\m@th\scriptstyle##$&$\m@th\scriptstyle{}##$\crcr
			#1\crcr
		}%
	}
}
	\renewcommand{\phi}{\varphi}			
	\renewcommand{\epsilon}{\varepsilon}	
	\renewcommand{\theta}{\vartheta}		
	\renewcommand{\Delta}{\varDelta}		
\newcommand{\smallo}{
	  \mathchoice
	    {{\scriptstyle\mathcal{O}}}
	    {{\scriptstyle\mathcal{O}}}
	    {{\scriptscriptstyle\mathcal{O}}}
	    {\scalebox{.7}{$\scriptscriptstyle\mathcal{O}$}}
	  }	
	\newcounter{thc}[section]
	\numberwithin{thc}{section}
	\numberwithin{equation}{section}
	\theoremstyle{plain}
		\newtheorem{corollary}[thc]{Corollary}
		\newtheorem{proposition}[thc]{Proposition}
		\newtheorem*{proposition*}{Proposition}
		\newtheorem{theorem}[thc]{Theorem}
		\newtheorem{lemma}[thc]{Lemma}
		\newtheorem*{lemma*}{Lemma}
		\newtheorem{assumption}[thc]{Assumption}
	\theoremstyle{definition}
		\newtheorem{definition}[thc]{Definition}
		\newtheorem{example}[thc]{Example}
		\newtheorem{remark}[thc]{Remark}
\title{Non--parametric inference on the reaction term in semi--linear SPDEs with spatial ergodicity}
\author{Sascha Gaudlitz\thanks{Humboldt--Universität zu Berlin, Germany \\ Email: sascha.gaudlitz<at>hu-berlin.de}}
\date{}
\begin{document}

\maketitle
\begin{abstract}
	This paper discusses the non-parametric estimation of a non-linear reaction term in a semi-linear parabolic stochastic partial differential equation (SPDE). The estimator's consistency is due to the spatial ergodicity of the SPDE while the time horizon remains fixed.  
	The analysis of the estimation error requires the concentration of spatial averages of non-linear transformations of the SPDE. The method developed in this paper combines the Clark-Ocone formula from Malliavin calculus with the Markovianity of the SPDE and density estimates. The resulting variance bound utilises the averaging effect of the conditional expectation in the Clark-Ocone formula.
The method is applied to two realistic asymptotic regimes. The focus is on a coupling between the diffusivity and the noise level, where both tend to zero. Secondly, the observation of a fixed SPDE on a growing spatial observation window is considered. Furthermore, the concentration of the occupation time around the occupation measure is proved.
\end{abstract}

\noindent\textit{2020 MSC subject classifications.} Primary 62G05, 60H15; secondary 60H07. \\
\textit{Key words.} Non--parametric estimation, SPDE, Ergodicity, Clark--Ocone formula, Gaussian density bounds.

\section{Introduction}\label{sec:Balances_Intro}

  The main contribution of this work amounts to non-parametrically estimating the non-linear reaction term in semi-linear SPDEs of the type
	\begin{equation}
	\D X_t = \nu A_t X_t\,\D t + f\circ X_t\,\D t + \sigma\, \D W_t,\quad 0\le t\le T,\label{eq:Balanced_SPDE}
\end{equation}
	on a bounded domain $\Lambda\subset \mathbb{R}^d$, $d\in\mathbb{N}$. Here, $(A_t)_{t\in[0,T]}$ is a family of unbounded operators on $L^2(\Lambda)$, for example $A_tz=\operatorname{div}(\omega(t,\MTemptyplaceholder)\nabla z)$ for $\omega\colon [0,T]\times \Lambda \to\mathbb{R}_{>0}$. $(W_t)_{t\in[0,T]}$ is the driving Gaussian process on a probability space $(\Omega,\mathcal{F},\prob*{})$ and $(\mathcal{F}_t)_{t\ge 0}$ is the natural filtration generated by $(W_t)_{t\in[0,T]}$.  We refer to $\nu>0$ as the \emph{diffusivity} and to $\sigma>0$ as the \emph{noise level} of the system. The unknown non-linear function $f\colon \mathbb{R}\to\mathbb{R}$ models local reactions (\emph{reaction function}) and shall be estimated based on observing the process $(X_t)_{t\in[0,T]}$ continuously in time and space. We refer to Section \ref{sec:Setting} for the detailed model assumptions.

		\subsection{Statistical methodology and contributions}\label{subsec:Balanced_BalancedJustification}
		
		In this work, some $x_0$ in the \emph{state space} $\mathbb{R}$ of $X_t(y)$ is fixed and the local reaction $f(x_0)$ is estimated consistently on a finite time horizon $T<\infty$.
		 
		\subsubsection*{Non-parametric regression} Informally, we can rewrite the estimation of the reaction function $f$ in the SPDE \eqref{eq:Balanced_SPDE} into a non-parametric regression framework:
		\begin{equation*}
			\D X_t - \nu A_t X_t\,\D t = f\circ X_t\,\D t + \sigma\,\D W_t,\quad 0\le t\le T,
		\end{equation*}
		where $\D X_t - \nu A_t X_t\,\D t$ corresponds to the \emph{observable}, $f\circ X_t\,\D t$ to the \emph{signal} and $\sigma\,\D W_t$ to the \emph{noise}. From a statistical perspective, the problem of estimating $f$ is thus directly linked to non-parametric regression with \emph{random design}. Importantly, the non-i.i.d.-design is given by evaluations of $X=(X_t(y))_{t\in [0,T],y\in\Lambda}$, depends on the unknown $f$ and is complicated by the non-Gaussianity of $X$. The statistical approach to estimating $f(x_0)$ uses local information of $f$ around $x_0$. To this end, consider a \emph{kernel} $K\colon \mathbb{R}\to\mathbb{R}$ with compact support and its localised version $K_h(x)\coloneqq K((x-x_0)/h)$ with \emph{bandwidth} $h>0$. A canonical choice for estimating $f(x_0)$ is the Nadaraya-Watson estimator
		\begin{equation*}
			\hat{f}(x_0)_{h}^{\operatorname{NW}} \coloneqq \frac{\int_0^T \iprod*{K_h\circ X_t}{\D X_t - \nu A_t X_t\,\D t}_{L^2(\Lambda)}}{\int_0^T \int_\Lambda K_h(X_t(y))\,\D y\D t}.
		\end{equation*}
		Its analysis is closely linked to understanding the design $X$, in particular the marginal densities of $X$ and the behaviour of averages. The main statistical contribution of this work amounts to showing that concentration results for spatial averages of the non-Markovian process $y\mapsto X_t(y)$ are obtainable and that they can enable the consistent estimation of the reaction function $f$. This is exemplified in two concrete settings, which are outlined in the next paragraph.
		
		\subsubsection*{The asymptotic regimes} The Girsanov theorem (e.g.\ Theorem 10.18 of \citet{DaPrato2014}) implies that the laws of $X$ solving the SPDE \eqref{eq:Balanced_SPDE} with globally Lipschitz-continuous reaction functions $f$ and $g$ on the path space $C([0,T],L^2(\Lambda))$ are typically absolutely continuous, thus rendering the model statistically non-identifiable. Consequently, the need for asymptotic regimes affecting not only the observation scheme, but also the law of $X$ itself, arises.
		
		The statistical inference on $f$ was first considered by \citet{Ibragimov1999, Ibragimov2000, Ibragimov2001} and \citet{Ibragimov2003}, who assume that $f(x)$ can be written as $\theta \bar{f}(x)$ with a known reaction function $\bar{f}$, and estimate the reaction intensity $\theta$ in the small noise regime $\sigma\to 0$. This regime corresponds to an asymptotically deterministic design, where $\var(X_t(y))\to 0$. \citet{hildebrandtNonparametricCalibrationStochastic2023} estimate $f$ non-parametrically in the (temporally) ergodic setting $T\to\infty$ and \citet{goldysParameterEstimationControlled2002} conduct parametric estimation. In both works, the design density converges to an ergodic density and the variance of $X_t(y)$ is of the order of 1, i.e.\ $\var(X_t(y))\sim 1$. \citet{gaudlitzEstimationReactionTerm2023} observed that a small diffusivity level $\nu$ allows for the consistent estimation of the reaction intensity $\theta$, provided that the reaction function $\bar{f}$ is known. This asymptotic regime amounts to a design with exploding variance, where $\var(X_t(y))\to\infty$.
		
		 The focus of this work is on the asymptotic regime $\nu\to 0$, which is realistic in applications. Small values of $\nu$ yield the important class of \emph{diffusion-limited reactions} in physical chemistry \citep{riceDiffusionlimitedReactions1985}. When modelling biodiversity, small values of $\nu$ allow for the coexistence of populations \citep{groseljHowTurbulenceRegulates2015}. Further examples with small or medium diffusivity include the works of \citet{Soh2010, Alonso2018,flemmingHowCorticalWaves2020, Altmeyer2020b}. The noise level $\sigma=\sigma(\nu)\to 0$ is coupled to $\nu$ and tends to zero such that the design does not degenerate, i.e.\ $\var(X_t(y))\sim 1$. Since the latter is important when modelling random phenomena with SPDEs at small diffusivity, the coupling of $\sigma$ to $\nu$ arises naturally.
		 In computational neuroscience, a small noise level $\sigma$ increases the transmission probability \citep{tuckwellStochasticPartialDifferential2013}. A large noise level $\sigma$ hides the non-linear behaviour induced by $f$ \citep{Pasemann2021}. Viewing SPDEs as a method to quantify model uncertainty for PDEs additionally motivates small values of $\sigma$.
		 
		 Using the concrete example of an SPDE with fixed diffusivity level $\nu=1$ and noise level $\sigma=1$, which is observed on a growing spatial observation window, we show that the methodology developed in this work is not specific to the $\nu\to 0$ and $\sigma(\nu)\to 0$ asymptotics.
		
		
		\subsubsection*{Statistical results} The (asymptotic) \emph{spatial ergodicity} of the process $y\mapsto X_t(y)$ for fixed time $0<t\le T$ gives rise to the concentration of spatial averages of the SPDE (Section \ref{sec:Density}) and leads to the consistency of the estimator for $f(x_0)$ (Theorems \ref{thm:Nonparametric} and \ref{thm:Nonparametric_GrowingDomains}). If $\D W_t / \D t$ is space-time white noise, a central limit theorem (Corollary \ref{cor:CLT}) and the minimax-optimality of the convergence rate (Proposition \ref{prop:LowerBound}) are established. The estimator can be computed efficiently from discrete observations using weighted least squares (see Remarks \ref{rmk:PracticalImplementation} and Subsection \ref{subsec:Balanced_Numerics}).
		
		\subsection{Probabilistic challenges and contributions}\label{subsec:Balanced_Probchallenges}	
		
		 A precise control of the design and of the estimation error is achieved by using the spatial ergodicity of the SPDE. The fundamental phenomenon of spatial ergodicity for SPDEs has first been observed by \citet{chenSpatialErgodicitySPDEs2021} and has led to numerous convergence results for spatial averages of the type $\int_{[0,N]^d} g(v_t(y))\,\D y$ as $N\to\infty$, where $v$ solves the stochastic heat equation on $\mathbb{R}^d$ \citep{khoshnevisanSpatialStationarityErgodicity2021, kimLimitTheoremsTimedependent2022, kuzgunConvergenceDensitiesSpatial2022, chenSpatialErgodicityCentral2022, chenCentralLimitTheorems2022a, chenCentralLimitTheorems2023}. In the setting presented in this work, the spatial ergodicity occurs asymptotically as $\nu\to 0$ and, in principle, the previous results could be adapted to this case. Importantly, they are not suited to perform non-parametric estimation, since they are not uniform in the Lipschitz-constant of $g$. We tackle the following challenges, which are of independent statistical and probabilistic interest.
		
	\subsubsection*{Concentration of spatial averages}
	The analysis of the estimator  $\hat{f}(x_0)_{h}^{\operatorname{NW}}$ requires an in-depth understanding of the fluctuations of functionals of the spatial process $y\mapsto X_t(y)$ of the type
	\begin{equation}
		\int_\Lambda K_h(X_t(y))\,\D y, \quad h>0,\quad 0\le t\le T,\label{eq:deltasequence}
	\end{equation}
	where $K_h$ behaves similarly to a delta-sequence as $h\to 0$. If $y\mapsto X_t(y)$ was generated by a stochastic (fractional) differential equation (SDE), the fluctuations could be controlled using well-established tools. These include the local time \citep{Kutoyants2013, RevuzYor1999}, the It\^o-formula \citep{nicklNonparametricStatisticalInference2020}, mixing properties \citep{castellanaSmoothedProbabilityDensity1986, comteSuperOptimalRates2005}, martingale approximations \citep{aeckerle-willemsConcentrationScalarErgodic2021, trottnerConcentrationAnalysisMultivariate2023}, the (fractional) Meyer inequality \citep{comteNonparametricEstimationFractional2019, huDriftParameterEstimation2019} and the spectral gap property of the transition semi-group \citep{dalalyanAsymptoticStatisticalEquivalence2007}. Even though first results in this direction have been obtained for the process $y\mapsto X_t(y)$ for $d=1$ or in the Gaussian case by \citet{tudorChaosExpansionRegularity2013, sunQuadraticCovariationsSolution2020, chenSpatialErgodicitySPDEs2021, boufoussiLocalTimesSystems2023, boufoussiBesovRegularityLocal2023, huStochasticHeatEquation2009}, none of these tools seem available in the required generality. As previous research on the local time for random fields (e.g.\ \citet{gemanOccupationDensities1980, khoshnevisanMultiparameterProcessesIntroduction2002}) has primarily focused on Gaussian fields, its application to our setting is challenging. The spatial ergodicity approach of \citet{chenSpatialErgodicitySPDEs2021} and the variance bound of Lemma 2.12 of \citet{gaudlitzEstimationReactionTerm2023}, which are both based on the Poincaré inequality, can be adapted to the setting presented here, but only yield a variance bound for \eqref{eq:deltasequence} of the order of $\mathcal{O}(\sigma(\nu)^2 h^{-2})$ and $\mathcal{O}(\sigma(\nu)^2 h^{-1})$, respectively. As $h\to 0$ is required to reduce the bias of the estimator $\hat{f}(x_0)_{h}^{\operatorname{NW}}$, these bounds are not sufficiently sharp.
	
	Instead, a novel approach employing the Clark-Ocone formula together with upper bounds on the (Lebesgue-) density of $X_t(y)$, for $0<t\le T$ and $y\in\Lambda$, is introduced, which yields the bound
	\begin{equation*}
		\var\left(\int_\Lambda K_h(X_t(y))\,\D y\right) = \mathcal{O}(\sigma(\nu)^2),\quad h>0,\quad 0\le t\le T,
\end{equation*}
in Proposition \ref{prop:VarianceBound}. The first observation in the proof is that the upper bound for the density of $X_t(y)$ does not depend on the (deterministic) initial condition. In a second step, we use these density bounds to exploit the averaging effect of the conditional expectation in the Clark-Ocone formula and obtain the bound for the variance. This is a general approach since it only requires the following properties of the process $X$: As a process in time it needs to be Markovian, allow for a Clark-Ocone formula and obey upper and lower bounds of the marginal Lebesgue-densities. A related approach has been used by \citet{kohatsu-higaApproximationsNonsmoothIntegral2014} for controlling the discretisation error of time averages of functionals of one-dimensional diffusions. In the setting presented in this work, the infinite-dimensional nature of the SPDE \eqref{eq:SPDE1} significantly complicates the treatment of the Malliavin derivative and the densities compared to the setting considered by \citet{kohatsu-higaApproximationsNonsmoothIntegral2014}. The Clark-Ocone formula has been used in a statistical context, e.g.\ by \citet{gobetLocalAsymptoticMixed2001}, but - to the best of the author's knowledge - this is the first time the  averaging effect of the conditional expectation in the integrand of the Clark-Ocone formula is used explicitly.

As a by-product, Lemma \ref{lem:Occupationtime} shows that this technique can be used to prove concentration results for the \emph{occupation time} of the process $y\mapsto X_t(y)$. Extensions of the method beyond parabolic equations pose interesting questions for future research.
		\subsubsection*{Density bounds using minimal regularity of $f$} From a statistical perspective, a key challenge is to obtain convergence rates of the estimator that improve with higher smoothness of the function $f$ around $x_0$. This is usually achieved by using  higher-order kernels (see Proposition 1.65 of \citet{Kutoyants2013} or Section 1.2 of \citet{Tsybakov2009}) or by using local polynomials (see \citet{fanLocalPolynomialModelling2017} or Section 1.6 of \citet{Tsybakov2009}). The former requires that if $f$ has $k\in\mathbb{N}$ bounded derivatives, then the marginal density of $X_t(y)$ also has $k$ bounded derivatives for $0<t\le T$ and $y\in\Lambda$. No such results seem available for SPDEs. Most quantitative results on the densities of SPDEs proceed similarly to Theorem 2.1.4 of \citet{nualartMalliavinCalculusRelated2006} and require $k+2$ bounded derivatives of $f$ for $k$ bounded derivatives of the density \citep{carmonaRandomNonlinearWave1988a, milletStochasticWaveEquation1999, marquez-carrerasStochasticPartialDifferential2001, dalangPotentialTheoryHyperbolic2004, SanzSole2005, nualartExistenceSmoothnessDensity2007a, dalangHittingProbabilitiesSystems2009,  marinelliExistenceRegularityDensity2013}. Assuming smoothness of $f$ with bounded derivatives of all orders, Gaussian lower and upper density bounds are proven by \citet{nualartGaussianEstimatesDensity2012a}.
		
		Regularity results for the density for non-smooth $f$ are given by \citet{romitoSimpleMethodExistence2018} and \citet{nourdinDensityFormulaConcentration2009}. In the latter, requiring only global Lipschitz-continuity of $f$, the authors bound the densities (but not their derivatives) from below and above. Their approach has been applied by \citet{nualartGaussianDensityEstimates2009a, nualartOptimalGaussianDensity2011} to SPDEs, and we extend their results to more general operators $A_t$, domains $\Lambda$, and spatial covariance structures of $\D W_t/\D t$ (Proposition \ref{prop:DensityBounds}). 
			The lower and upper bounds for the densities in Proposition \ref{prop:DensityBounds} pave the way for other statistical methods, compare Assumption (E) and its discussion of \citet{hildebrandtNonparametricCalibrationStochastic2023}.		
			
\subsection{Outline}
	
	Section \ref{sec:Setting} introduces the analytical setting and the model assumptions. The main results on the non-parametric estimation constitute Section \ref{sec:Balanced_MainResults}. The bounds for the marginal densities of $X_t(y)$ for $0<t\le T$, $y\in\Lambda$, and the concentration results due to (asymptotic) spatial ergodicity are included in Section \ref{sec:Density}. In Section \ref{sec:GrowingObs} we show that the methodology developed in this work extends beyond the case $\nu\to 0$, namely to the setting of a semi-linear stochastic heat equation on $\mathbb{R}$ with fixed diffusivity $\nu=1$ and noise level $\sigma = 1$, which is observed on a growing spatial observation window. The appendix contains the remaining proofs. 
	
	\section{Setting and Assumptions}\label{sec:Setting}

For $d\in\mathbb{N}$ let $\Lambda\subset\mathbb{R}^d$ be open and bounded with Lipschitz boundary, and endow $\mathcal{H}\coloneqq L^2(\Lambda)$ with the standard inner product $\iprod{}{}$. Consider the SPDE
\begin{equation}
	\D X_t = \nu A_tX_t\,\D t + F(X_t)\,\D t + \sigma \D W_t,\label{eq:SPDE1}
\end{equation}
on $\Lambda$ with deterministic and continuous initial condition $X_0\in C(\Lambda)$ and with either Dirichlet or Neumann boundary conditions. For a bounded linear operator $B\colon \mathcal{H}\to\mathcal{H}$, the process $(W_t)_{t\in[0,T]}$ is a cylindrical $Q$-Wiener process with covariance operator $Q=B B^\ast$ on $\mathcal{H}$. $F$ is of Nemytskii-type, i.e.\ $F(z)=f\circ z$  for a non-linear function $f\colon \mathbb{R}\to\mathbb{R}$.

Let $\mathbb{H}$ be the closure of $\mathcal{H}$ with respect to the norm $\norm{}_{\mathbb{H}}$, which is induced by the inner product $\iprod{}{}_{\mathbb{H}} \coloneqq \iprod{Q \MTemptyplaceholder}{}$. The formal time-derivative $\D W_t/\D t$ can be interpreted as an isonormal Gaussian process $\mathcal{W}$ on $\mathfrak{H}\coloneqq L^2([0,T],\mathbb{H})$, where the spatial covariance is encoded in the norm of $\mathbb{H}$. For a linear bounded operator $L\colon\mathcal{H}\to\mathcal{H}$, let $\norm*{L}$ denote its operator norm, and deduce $\norm*{\xi}_{\mathbb{H}}\le \norm*{B}\norm*{\xi}$ for any $\xi\in \mathcal{H}$. Let $\identity$ be the identity operator on $\mathcal{H}$. We write $a\lesssim b$ (or $b\gtrsim a$) if there exists a constant $0<C<\infty$ depending only on non-asymptotic quantities such that $a\le Cb$ and $a\sim b$ if $a\lesssim b$ and $a\gtrsim b$. We denote by $\xrightarrow{\prob*{}}$ probabilistic and by $\xrightarrow{d}$ distributional convergence of random variables. For equality in distribution under a law $\qrob{}$ we write $\overset{\qrob{}}{=}$. Define the following H\"older-classes. For $\beta=1$, $\Sigma(\beta,L)$ denotes the class of globally Lipschitz-continuous functions $\mathbb{R}\to\mathbb{R}$ with Lipschitz-norm $L$. For $1<\beta\le 2$, $\Sigma(\beta,L)$ denotes the class of differentiable functions $\mathbb{R}\to\mathbb{R}$, whose derivative has $(\beta-1)$-H\"older-norm $L$. The indicator function for a set $M$ is denoted by $\indicator_M$. For $a,b\in\mathbb{R}$ let $a\wedge b\coloneqq \min(a,b)$ and $a\vee b \coloneqq \max(a,b)$. For two sets $M$, $N\subset\mathbb{R}^d$ we define their distance as $\operatorname{dist}(M,N)\coloneqq \inf\Set{\abs{m-n}\given m\in M, n\in N}$. The Lebesgue-measure of a Borel-set $M\subset\mathbb{R}^d$ is denoted by $\abs{M}$.
		
		 We proceed by collecting and discussing the main model conditions. This is complemented by concrete examples satisfying these assumptions (Example \ref{examp:main}). Since we consider the small diffusivity regime $\nu\to 0$, we restrict $0<\nu\le\bar{\nu}$ for some arbitrary but fixed $\bar{\nu}>0$.
\begin{assumption}[\textcolor{black}{Well-posedness}]\label{assump:WellPosedness}~
	\begin{enumerate}[(a)]
	\item\label{num:AssumpWellposedness_a} The reaction function $f$ is globally Lipschitz-continuous and $f'$ denotes its almost everywhere existing derivative. $\norm{f'}_\infty<\infty$ denotes the Lipschitz-constant of $f$.
	\item\label{num:AssumpWellposedness} The operator families $(A_t)_{t\in[0,T]}$ and $(A_t^\ast)_{t\in[0,T]}$ live on common domains $\operatorname{dom}(A_t)=\operatorname{dom}(A_0)\subset \mathcal{H}$ and $\operatorname{dom}(A_t^\ast)=\operatorname{dom}(A_0^\ast)\subset \mathcal{H}$, respectively, for all $0\le t\le T$ with $C_c^\infty(\Lambda)\subset \operatorname{dom}(A_0^\ast)$. Furthermore, for every $0<\nu\le\bar{\nu}$, the linear and deterministic equation
	\begin{equation*}
		 \frac{\partial}{\partial t}u_t= \nu A_t u_t,\quad 0\le \underbar{t}\le t\le T,\quad u_{\smash{\underbar{t}}}=\xi\in \operatorname{dom}(A_0),
\end{equation*}
 is solved by
	\begin{equation*}
		u_t = \semigroup_{\nu,t,\underbar{t}}\xi \coloneqq \int_\Lambda G_{\nu,t,\underbar{t}}(\MTemptyplaceholder,\eta)\xi(\eta)\,\D\eta,\quad 0\le \underbar{t}<t\le T,
	\end{equation*}
	for a non-negative \emph{Green function} $G_{\nu,\MTemptyplaceholder,\smash{\underbar{t}}}(\MTemptyplaceholder,\MTemptyplaceholder)\colon (\smash{\underbar{t}},T]\times \Lambda\times\Lambda\to \mathbb{R}_{\ge 0}$. Assume that there exists a constant $C_0>0$ such that
		\begin{equation*}
			\max\left(\norm*{\semigroup_{\nu,t,\smash{\underbar{t}}}},\norm*{G_{\nu,t,\smash{\underbar{t}}}(y,\MTemptyplaceholder)}_{L^1(\Lambda)}\right)\le C_0<\infty,\quad 0\le \smash{\underbar{t}}<t\le T,\quad y\in\Lambda,
		\end{equation*}
		and that
		\begin{equation}
			\frac{\partial}{\partial t}\semigroup_{\nu,t,\smash{\underbar{t}}}^\ast \xi = \nu A_t^\ast \semigroup_{\nu,t,\smash{\underbar{t}}}^\ast \xi =  \nu \semigroup_{\nu,t,\smash{\underbar{t}}}^\ast A_t^\ast\xi ,\quad \xi\in \operatorname{dom}(A_0^\ast),\quad 0\le \smash{\underbar{t}}<t\le T,\label{eq:Commuting}
		\end{equation}
		for all $0<\nu\le \bar{\nu}$.
		\item\label{num:AssumpWellposedNess_c} For all $t\ge 0$ and $0<\nu\le\bar{\nu}$ we have
		\begin{equation*}
			\sup_{y\in\Lambda}\int_0^t\norm*{G_{\nu,t,s}(y,\MTemptyplaceholder)}_{\mathbb{H}}^2\,\D s<\infty.
		\end{equation*}
\end{enumerate}
\end{assumption}

\begin{remark}[On the consequences of Assumption \ref{assump:WellPosedness} {\hyperref[assump:WellPosedness]{(well-posedness)}}]~
\begin{enumerate}[(a)]\label{rmk:Well_posedness}
	\item We can consider leading order operators of divergence type $A_t=\operatorname{div}(\omega_t\nabla)$ for temporally and spatially varying diffusivity $\omega_t(y)$ with ellipticity condition bounds $0<\underbar{\omega}\le \omega_t(y)$ in Examples \ref{examp:main} (\ref{num_Examp_a}) (\ref{num_Examp_b}) and (\ref{num_Examp_c}) (below).
	 \item By standard arguments (e.g.\ Theorem 2.4.3 of \citet{nualartMalliavinCalculusRelated2006}
	 or Theorem 1 of \citet{Ibragimov2003}), Assumption \ref{assump:WellPosedness} \hyperref[assump:WellPosedness]{(well-posedness)} ensures that the semi-linear SPDE \eqref{eq:SPDE1} admits a Markovian random field solution, in the sense that for all $0\le \underbar{t}< t\le T$ and $y\in\Lambda$ the equality
\begin{align}
\begin{split}
	X_t(y) &= \sigma \int_{\underbar{t}}^t\int_\Lambda G_{\nu,t,s}(y,\eta)\mathcal{W}(\D\eta,\D s)\\
	 &\quad +\int_{\underbar{t}}^t\int_\Lambda G_{\nu,t,s}(y,\eta)f(X_s(\eta))\,\D \eta\D s + \int_\Lambda G_{\nu,t,\underbar{t}}(y,\eta)X_{\underbar{t}}(\eta)\,\D\eta
	\end{split}\label{eq:RandomField1}
\end{align}
	is satisfied. We give a proof in Lemma \ref{lem:aux_WellPosedness} (\ref{num:aux_wellposed_a}) for completeness.
	\item The Markovianity of $t\mapsto(t,X_t)$ plays an important role in the following. For a starting time $0\le \smash{\underbar{t}} <T$ and an initial condition $\xi\in C(\Lambda)$ we denote by $\prob*[\smash{(\smash{\underbar{t}},\xi)}]{}$ the law of $(X_t)_{t\in [0,T-\smash{\underbar{t}}]}$ started at $(\smash{\underbar{t}},\xi)$. For notational simplicity, we abbreviate $\prob*{}\coloneqq \prob*[\smash{(0,X_0)}]{}$.
	\item Similarly to Proposition 2.4.4 of \citet{nualartMalliavinCalculusRelated2006}
	, the Malliavin derivative $\mathcal{D}X_t(y) = \mathcal{D}_{\MTemptyplaceholder,\MTemptyplaceholder} X_t(y)\in\mathfrak{H}$ of $X_t(y)$ under $\prob*[\smash{(\underbar{t},\xi)}]{}$ exists for all starting times $0\le \smash{\underbar{t}}<T$, deterministic initial conditions $\xi\in C(\Lambda)$, times $0< t\le T-\smash{\underbar{t}}$ and locations $y\in \Lambda$, and it satisfies
\begin{align}
\begin{split}
	\mathcal{D}_\tau X_t(y)&= \mathcal{D}_{\tau,\MTemptyplaceholder} X_t(y) \\
	&=\sigma G_{\nu,\smash{\underbar{t}}+t,\smash{\underbar{t}}+\tau}(y,\MTemptyplaceholder)+ \int_{\tau}^{t}\int_\Lambda G_{\nu,\smash{\underbar{t}}+t,\smash{\underbar{t}}+s}(y,\eta)f'(X_s(\eta))\mathcal{D}_\tau X_s(\eta)\,\D \eta\D s
	\end{split}\label{eq:MalliavinDerivative}
	\end{align}
	for $0\le \tau<t$ and $\mathcal{D}_\tau X_t(y)=0$ for $t\le \tau \le T-\smash{\underbar{t}}$. We give a proof in Lemma \ref{lem:aux_WellPosedness} (\ref{num:aux_wellposed_b}) for completeness. If $f$ is globally Lipschitz-continuous but not differentiable, then $f'(X_t(y))$ is an adapted process satisfying the bound $\sup_{0\le t\le T-\smash{\underbar{t}}, y\in\Lambda}\abs{f'(X_t(y))}\le \norm{f'}_\infty$ almost surely.\label{num:MalliavinDeriv}
	\item In Lemma \ref{lem:aux_WellPosedness} (\ref{num:aux_wellposed_c}) we show that 
	 the random field solution is also an analytically weak solution, in the sense that
	\begin{equation}
		\iprod*{X_t}{\phi}-\iprod*{X_0}{\phi}=\int_0^t \left(\iprod{X_s}{\nu A_s^\ast\phi} + \iprod*{F(X_s)}{\phi}\right)\,\D s + \sigma \int_0^t \iprod*{\phi}{\D W_s},\quad 0\le t\le T,\label{eq:WeakSolution}
	\end{equation}
	for all $\phi\in C_c^\infty(\Lambda)\subset \operatorname{dom}(A_0^\ast)$.
	\end{enumerate}
\end{remark}

The process $X$ is observed continuously in time on a bounded and connected (spatial) \emph{observation window} $\Gamma\subset \Lambda$ with positive Lebesgue-measure $\abs{\Gamma}>0$. The following assumption is the basis for all subsequent results.
\begin{assumption}[\textcolor{black}{Noise-scaling}]\label{assump:key}
	The noise level $\sigma=\sigma(\nu)\to 0$ depends on the diffusivity $0<\nu\le \bar{\nu}$ and tends to zero such that for absolute constants $0<\underbar{C}\le \bar{C}<\infty$, $0<\alpha<1$ and any $0\le \smash{\underbar{t}}<T$ we have
	\begin{equation}
		\sigma(\nu)^2\int_{0}^{t} \norm*{G_{\nu,\smash{\underbar{t}}+t,\smash{\underbar{t}}+s}(y,\MTemptyplaceholder)}_{\mathbb{H}}^2\,\D s\le \bar{C} t^{\alpha},\quad y\in\Lambda,\quad 0\le t\le T-\smash{\underbar{t}},\quad 0<\nu\le \smash{\bar{\nu}},\label{eq:Upperbound}
	\end{equation}
	as well as
	\begin{equation}
			\sigma(\nu)^2\int_{0}^{t}  \norm*{G_{\nu,\smash{\underbar{t}}+t,\smash{\underbar{t}}+s}(y,\MTemptyplaceholder)}_{\mathbb{H}}^2\,\D s\ge \underbar{C}\label{eq:Lowerbound} t^{\alpha},\quad y\in\Gamma,\quad 0\le t\le T-\smash{\underbar{t}},\quad 0<\nu\le \smash{\bar{\nu}}.
	\end{equation}
\end{assumption}
In the sequel, the dependency of the noise level on the diffusivity is sometimes omitted in the notation and we simply write $\sigma$ instead of $\sigma(\nu)$.
\begin{remark}[On Assumption \ref{assump:key} {\hyperref[assump:key]{(noise-scaling)}}]\label{rmk:on_Noisescaling}~
\begin{enumerate}[(a)]
	\item As for any time $0\le t\le T$ and location $y\in\Lambda$ we have
	\begin{equation*}
		\sigma^2\int_{0}^{t}  \norm*{G_{\nu,\smash{\underbar{t}}+t,\smash{\underbar{t}}+s}(y,\MTemptyplaceholder)}_{\mathbb{H}}^2\,\D s = \var\left(\sigma\int_{0}^{t}  \int_\Lambda G_{\nu,\smash{\underbar{t}}+t,\smash{\underbar{t}}+s}(y,\eta)\mathcal{W}(\D \eta,\D s)\right),
	\end{equation*}
	Assumption \ref{assump:key} \hyperref[assump:key]{(noise-scaling)} ensures that the noise level $\sigma$ and the diffusivity $\nu$ scale such that the variance of the Gaussian integral in \eqref{eq:RandomField1} does not degenerate as $\nu\to 0$ and thus $t^{-\alpha}\var(X_t(y))\sim 1$, $0<t\le t$.  Any other coupling of $\nu$ and $\sigma$ would imply either exploding or vanishing variance of $X_t(y)$. Consequently, this coupling between $\nu$ and $\sigma$ arises naturally when modelling random phenomena with SPDEs at low diffusivity.
	\item If $B$ has finite Hilbert-Schmidt norm, then \eqref{eq:Lowerbound} can only hold with $\sigma$ not tending to zero and Assumption \ref{assump:key} \hyperref[assump:key]{(noise-scaling)} is violated. Since the latter is required for first order SPDEs, this case is excluded.
	\item In the case of Dirichlet boundary conditions, the lower bounds in Assumption \ref{assump:key} \hyperref[assump:key]{(noise-scaling)} cannot hold uniformly in $y\in\Lambda$, which can be circumvented by the restriction to a subset $\Gamma$ with positive distance to $\partial\Lambda$.
	\item The parameter $0<\alpha<1$ in Assumption \ref{assump:key} \hyperref[assump:key]{(noise-scaling)} regulates the temporal behaviour of the density of $X_t(y)$, compare Proposition \ref{prop:DensityBounds} (below). The lower $\alpha$, the faster the density is smoothed out after the deterministic initial condition.
	\item  For any exponent $p\ge 1$, Lemma \ref{lem:UniformBoudnednessSolution} shows $\EV{\abs{X_t(y)}^p}<\infty$, uniformly in $0\le t\le T$, $y\in\Lambda$ and $0<\nu\le \bar{\nu}$.
\end{enumerate}
\end{remark}

The upper and lower bounds in Assumption \ref{assump:key} \hyperref[assump:key]{(noise-scaling)} form the basis for Corollary \ref{cor:DensityBounds} (below), which ensures that the (Lebesgue-) density $p_{\nu,\smash{\underbar{t}},t,y}$ of $X_{t}(y)$ under $\prob*[\smash{(\smash{\underbar{t}},\xi)}]{}$ exists for all $0<t\le T-{\smash{\underbar{t}}}$, $y\in\Lambda$, and can be bounded from below and above. For notational simplicity, we write $p_{\nu,t,y}\coloneqq p_{\nu,0,t,y}$. Note that $p_{\nu,\smash{\underbar{t}},t,y}$ also depends on the initial condition $\xi$, but the upper bounds on $p_{\nu,\smash{\underbar{t}},t,y}$ are uniform in $\xi$ and this dependency is suppressed for notational convenience.

\begin{example}\label{examp:main}~
\begin{enumerate}[(a)]
	\item Space-time white noise: The main example is the semi-linear stochastic heat equation with space-time white noise on an open and bounded interval $\Lambda\subset \mathbb{R}$, i.e.\ $d=1$, $A_t\equiv \Updelta$, $B=\identity$ and Dirichlet boundary conditions. Then Assumption \ref{assump:WellPosedness} \hyperref[assump:WellPosedness]{(well-posedness)} is satisfied and Lemma \ref{lem:Variance_Control_Mainexample*} yields Assumption \ref{assump:key} \hyperref[assump:key]{(noise-scaling)} with $\sigma^2=\nu^{1/2}$ and $\alpha=1/2$ if $\operatorname{dist}(\Gamma,\partial \Lambda)>0$. Typical realisations of $X$ for high and low diffusivity $\nu\in \Set{0.1,0.001}$ are displayed in Figure \ref{fig:Examp}.\label{num_Examp_a}
	\item Coloured noise I (Riesz kernel): Take $A_t\equiv\Updelta$ with Dirichlet boundary conditions on an open and bounded domain $\Lambda\subset\mathbb{R}^d$, $d\ge 1$, with Lipschitz-boundary and consider the covariance kernel $\chi(x)=\abs*{x}^{-\rho}$ for $1/2<\rho<1$. Define
	\begin{equation*}
		[Q\xi](y) \coloneqq \int_\Lambda \chi(y-\eta)\xi(\eta)\,\D \eta,\quad \xi\in \mathcal{H},\quad y\in\Lambda,
	\end{equation*}
	then Assumption \ref{assump:WellPosedness} \hyperref[assump:WellPosedness]{(well-posedness)} is satisfied and Lemma \ref{lem:VaryingCorrelationLength} yields Assumption \ref{assump:key} \hyperref[assump:key]{(noise-scaling)} with $\sigma^2=\nu^{\rho/2}$ and $\alpha = 1-\rho/2$, if $\Gamma$ is convex with $\operatorname{dist}(\Gamma,\partial \Lambda)>0$.
	\label{num_Examp_b}
	\item Coloured noise II (Spectral dispersion $B$): Fix $\rho_1>0$, $0\le \rho_2<2$ with ${\rho_1+2\rho_2\ge 1}$ and assume that $A_t\equiv A$ and $B$ have discrete spectra with the same eigenfunctions $(e_k)_{k\in\mathbb{N}}$ and eigenvalues of the order of $\lambda_k\sim -k^{\rho_1}$, $\sigma_k\sim k^{-\rho_2}$, respectively. If $A$ is the generator of a Markov process (e.g. $A= -(-\Updelta)^{\rho_1/2}$ with $1<\rho_1\le 2$, \citep{Lischke2020}), then the Green function $G$ for $A$ is given by the transition probabilities and Assumption \ref{assump:WellPosedness} \hyperref[assump:WellPosedness]{(well-posedness)} is satisfied. Lemma \ref{lem:Variance_Control_Spectral} yields Assumption \ref{assump:key} \hyperref[assump:key]{(noise-scaling)} with $\sigma^2=\nu^{(1-2\rho_2)/\rho_1}$ and $\alpha = 1+(2\rho_2-1)/\rho_1$, provided the mild Assumption  \ref{assump:B_Aux} (below) on the eigenfunctions $(e_k)_{k\in\mathbb{N}}$.\label{num_Examp_d}
		\item Lemma \ref{lem:Neumann} shows that the statements of (\ref{num_Examp_a}) and (\ref{num_Examp_b}) hold true with $\Gamma=\Lambda$, if we consider Neumann instead of Dirichlet boundary conditions.\label{num_Examp_c}
\end{enumerate}
\end{example}

\begin{figure}
\includegraphics[width = 0.32\textwidth]{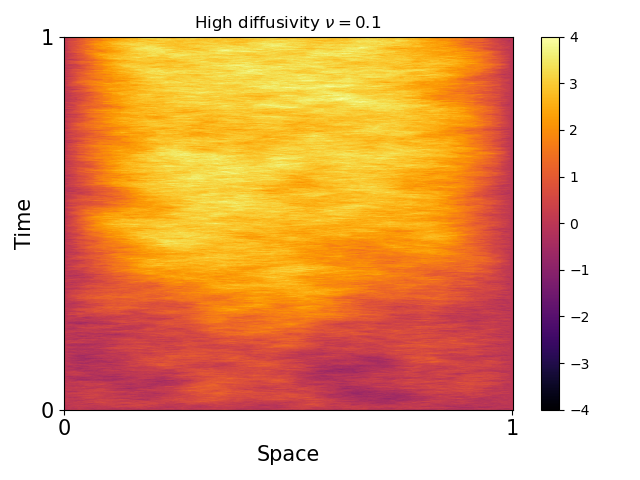}
\includegraphics[width = 0.32\textwidth]{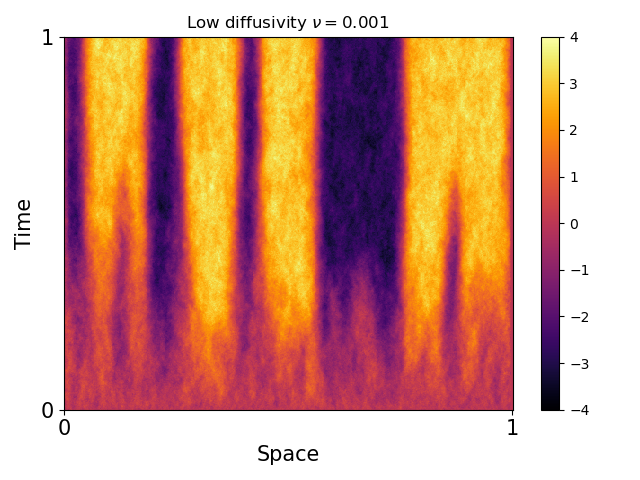}
\includegraphics[width = 0.32\textwidth]{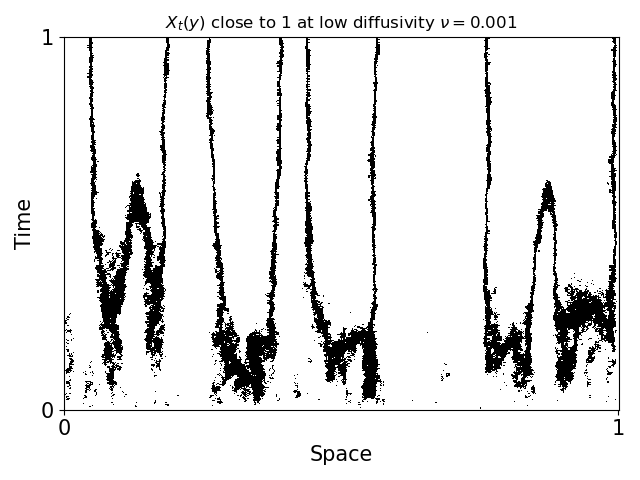}
\caption{Realisations of the semi-linear stochastic heat equation with space-time white noise (Example \ref{examp:main} (\ref{num_Examp_a})) and Allen-Cahn non-linearity $f$ with stable points $\pm 3$ given by \eqref{eq:Simulation_f} for $\nu=0.1$ (left) and $\nu=0.001$ (middle and right). Right: Those space-time points $(t,y)$, where $\abs{X_t(y)-1}\le 0.5$ are highlighted in black.
}\label{fig:Examp}
\end{figure}

\begin{remark}[On the connection to spatial ergodicity]\label{rmk:SpatialErgodicity}
	Further intuition for the coupling $\sigma=\sigma(\nu)\to 0$ can be obtained in the case of Example \ref{examp:main} (\ref{num_Examp_a}), the semi-linear stochastic heat equation driven by space-time white noise. Lemma \ref{lem:zoomingout} shows that $Y_t(y)\coloneqq X_t(\nu^{1/2}y)$ for $0\le t\le T$ and $y\in \nu^{-1/2}\Lambda$ solves
	\begin{equation*}
		\D Y_t = \Updelta Y_t\,\D t + F(Y_t)\,\D t + \nu^{-1/4}\sigma\,\D\bar{W}_t,\quad 0\le t\le T,\quad Y_0(y)= X_0(\nu^{1/2}y),
	\end{equation*}
	on $\nu^{-1/2}\Lambda$ with space-time white noise $\D\bar{W}_t/\D t$ on $\nu^{-1/2}\Lambda$. Note that $\sigma=\nu^{1/4}$ is such that the noise level of $Y_t$ is equal to one. The diffusivity level of $Y_t$ is one, whereas its domain $\nu^{-1/2}\Lambda$ grows. Thus, we can expect (spatial) ergodicity of $y\mapsto X_t(y)$ for fixed $0<t\le T$. See also Figure \ref{fig:Examp} (middle) for a visual indication of viewing $X_t$ as a spatially squeezed version of $Y_t$, which lives on a larger (spatial) domain.
\end{remark}

In the case of a general dispersion operator $B$ we prove consistency of the estimator for $f(x_0)$ and specify the convergence rate. The following assumption additionally allows for a central limit theorem (Corollary \ref{cor:CLT}), and a proof of the minimax-optimality of the convergence rate (Proposition \ref{prop:LowerBound}).

\begin{assumption}[\textcolor{black}{Noise covariance function}]\label{assump:CLT}
There exist constants ${0<\smash{\underbar{\Sigma}}\le\bar{\Sigma}<\infty}$ and a measurable function ${\Sigma\colon \mathbb{R}\to [\smash{\underbar{\Sigma}},\bar{\Sigma}]}$ such that  $[B \xi](y) = \Sigma(y)\xi(y)$ for all $\xi\in\mathcal{H}$ and $y\in\Lambda$.
\end{assumption}

Assumptions \ref{assump:WellPosedness} \hyperref[assump:WellPosedness]{(well-posedness)} and \ref{assump:key} \hyperref[assump:key]{(noise-scaling)} are required throughout this paper, whereas Assumption \ref{assump:CLT} \hyperref[assump:CLT]{(noise covariance function)} is only required for the central limit theorem and the (statistical) lower bounds.

	\section{Estimator and statistical main results}\label{sec:Balanced_MainResults}
	This section contains the statistical results for the $\nu\to 0$ and $\sigma=\sigma(\nu)\to 0$ asymptotics.
	
		\subsection{Definition of the estimator}\label{subsec:Balanced_estimator}
		
		Recall that a canonical choice for estimating $f(x_0)$ is the Nadaraya-Watson estimator
		\begin{equation}
			\hat{f}(x_0)_{h}^{\operatorname{NW}} = \frac{\int_0^T \iprod*{K_h(X_t)}{\D X_t - \nu A_t X_t\,\D t}}{\int_0^T \int_\Lambda K_h(X_t(y))\,\D y\D t},\label{eq:Estimator_Raw}
		\end{equation}
		which locally averages the information about $f$ around $x_0$ using weights given by the kernel $K_h$. There are two challenges:
		\begin{enumerate}[(a)]
			\item\label{num:challenge_a} If $(X_t)_{t\ge 0}$ was solving an SODE, the (ergodic) density $p_{\nu,t}$ of $X_t$ would be more regular than the drift $f$ and by choosing a kernel of sufficiently high order, the estimator from \eqref{eq:Estimator_Raw} benefits from arbitrary regularity of $f$ (see Proposition 1.65 of \citet{Kutoyants2013}). As mentioned in Section \ref{sec:Balances_Intro}, no results for SPDEs seem available in the literature so far, which ensure that $p_{\nu,t,y}$ has at least the regularity of $f$. Consequently, it is unclear how to reduce the bias of $\hat{f}(x_0)_{h}^{\operatorname{NW}}$. This challenge is tackled by combining two estimators of the type \eqref{eq:Estimator_Raw} with data-driven weights, similarly to local polynomial estimators. Since no precise control of the derivatives of $p_{\nu,t,y}$ is available, we require a positive kernel and restrict the regularity of $f$ to $1\le \beta\le 2$.
			\item\label{num:challenge_b} Since $X_t\not\in\operatorname{dom}(A_t)$, the term $\D X_t- \nu A_t X_t\,\D t$ is not well-defined in the (analytically) strong sense. We use an (analytically) weak formulation, as explained in Remark \ref{rmk:WellDefinednessEstimator} (below).
		\end{enumerate}
		To address (\ref{num:challenge_a}), we use a (random) kernel $K_{h,\sigma}$, which is a combination of two Nadaraya-Watson kernels $K_{-,h}$ and $K_{+,h}$ with data-driven weights. The weights ensure that
		\begin{enumerate}[(I)]
			\item\label{num_RmkIntegrate_a} $K_{h,\sigma}\ge 0$,
			\item $\int_0^T\int_{\Gamma} K_{h,\sigma}(X_t(y))\,\D y\D t= 1$ and\label{num_RmkIntegrate_b}
			\item $\int_0^T\int_{\Gamma} K_{h,\sigma}(X_t(y))(X_t(y)-x_0)\,\D y\D t =0.$\label{num_RmkIntegrate_c}
		\end{enumerate}
		The proof of Theorem \ref{thm:Nonparametric} (below) reveals that Property (\ref{num_RmkIntegrate_c}) reduces the bias of the estimator depending on the regularity of $f$, similarly to local polynomial estimators. To this end, let $K_-\colon \mathbb{R}\to\mathbb{R}_{\ge 0}$ and $K_+\colon \mathbb{R}\to\mathbb{R}_{\ge 0}$ be two functions satisfying the following mild condition.
		\begin{assumption}[\textcolor{black}{Kernel}]\label{assump:Kernel}
			Let $K_-\colon \mathbb{R}\to\mathbb{R}_{\ge 0}$ and $K_+\colon \mathbb{R}\to\mathbb{R}_{\ge 0}$ be (non-zero) Lipschitz-continuous functions, which have (almost) disjoint supports $\operatorname{supp}(K_-)\subset [-1,0]$ and $\operatorname{supp}(K_+)\subset [0,1]$.
		\end{assumption}
		For any function $g\colon \mathbb{R}\to\mathbb{R}$ and a bandwidth $h>0$ define the localisation
		\begin{equation}
			g_h(x) \coloneqq g\left(\frac{x-x_0}{h}\right),\quad x\in \mathbb{R},\label{eq:Rescaling}
		\end{equation}
		and write $K_{-,h}\coloneqq (K_-)_h$, $K_{+,h}\coloneqq (K_+)_h$.
		\begin{remark}[On the scaling \eqref{eq:Rescaling}]\label{rmk:Scaling}
			The scaling ensures that
			\begin{align*}
				\norm{g_h}_{L^1(\mathbb{R})}&=h\norm{g}_{L^1(\mathbb{R})},\quad 
				\norm{g_h^2}_{L^1(\mathbb{R})}=h\norm{g^2}_{L^1(\mathbb{R})}=h\norm{g}_{L^2(\mathbb{R})}^2,\\
				\norm{g_h'}_{L^1(\mathbb{R})}&\coloneqq \norm{(g_h)'}_{L^1(\mathbb{R})} = \norm{g'}_{L^1(\mathbb{R})},\quad
				\norm{(g_h^2)'}_{L^1(\mathbb{R})}=\norm{(g^2)'}_{L^1(\mathbb{R})},
			\end{align*}
			whenever these norms are finite. Estimators of the type \eqref{eq:Estimator_Raw} are invariant under rescaling the kernel, for example normalising in $L^1(\mathbb{R})$, and the choice in \eqref{eq:Rescaling} is merely for notational convenience.
		\end{remark}
		
		Fix a bandwidth $h>0$ and introduce the non-negative random quantities
		\begin{align*}
			\mathcal{T}_{h,\sigma}^{\smash{\pm,1}}&\coloneqq \int_0^T \int_{\Gamma} K_{\pm,h}(X_t(y))\,\D y\D t,\quad \mathcal{T}_{h,\sigma}^{\smash{\pm,2}}\coloneqq \pm\int_0^T \int_{\Gamma} K_{\pm,h}(X_t(y))(X_t(y)-x_0)\,\D y\D t,\\
				\mathcal{J}_{h,\sigma}&\coloneqq \mathcal{T}_{h,\sigma}^{\smash{-,1}}\mathcal{T}_{h,\sigma}^{\smash{+,2}}+\mathcal{T}_{h,\sigma}^{\smash{+,1}}\mathcal{T}_{h,\sigma}^{\smash{-,2}}
		\end{align*}
		and define the random kernel
		\begin{equation}
			K_{h,\sigma}(x)\coloneqq \frac{\mathcal{T}_{h,\sigma}^{\smash{+,2}}K_{-,h}(x) + \mathcal{T}_{h,\sigma}^{\smash{-,2}}K_{+,h}(x)}{\mathcal{J}_{h,\sigma}},\quad x\in\mathbb{R},\quad h>0.\label{eq:Kernel}
		\end{equation}
		Note that $\operatorname{supp}(K_{h,\sigma})\subset [x_0-h,x_0+h]$.
		\begin{remark}[On the dependence on $\nu$ and $\sigma$]
			Note that $\mathcal{T}_{h,\sigma}^{\pm,1}$, $\mathcal{T}_{h,\sigma}^{\pm,2}$, $\mathcal{J}_{h,\sigma}$ and $K_{h,\sigma}$ depend on the diffusivity $\nu$ through the law of $X$. We will see in Corollary \ref{cor:OptimalRate} that by indexing the statistical quantities by the noise level $\sigma=\sigma(\nu)$ instead of the diffusivity level $\nu$, we obtain the classical non-parametric rates.
		\end{remark}
		\begin{definition}
		With $K_{h,\sigma}$ from \eqref{eq:Kernel} the estimator $\hat{f}(x_0)_{h,\sigma}$ for $f(x_0)$ is defined as
		\begin{equation}
			\hat{f}(x_0)_{h,\sigma}\coloneqq \int_0^T \iprod*{\indicator_{\Gamma} K_{h,\sigma}(X_t)}{\D X_t-\nu A_t X_t\,\D t},\quad h>0.\label{eq:Estimator}
		\end{equation}
		\end{definition}
		
		\begin{remark}[On the definition of the estimator \eqref{eq:Estimator}]\label{rmk:WellDefinednessEstimator}
			To define ${\D X_t- \nu A_t X_t\,\D t}$, consider any complete orthonormal system $(e_k)_{k\in\mathbb{N}}\subset \operatorname{dom}(A_0^\ast)$ of $L^2(\Gamma)\subset \mathcal{H}$, extended to functions on $\Lambda$ by zero. Let
		\begin{align*}
			\int_0^T \iprod*{\indicator_{\Gamma}K_{h,\sigma}(X_t)}{\D X_t-\nu A_t X_t\,\D t}\hspace{-10em}&\\
			&\coloneqq\sum_{k\in\mathbb{N}}\int_0^T\iprod*{\indicator_{\Gamma}K_{h,\sigma}(X_t)}{e_k}\left(\iprod*{e_k}{\D X_t} - \iprod{\nu A_t^\ast e_k}{X_t}\,\D t\right),
			\end{align*}
			which is well-defined by \eqref{eq:WeakSolution} and satisfies under the data-generating law $\prob{}$ the representation
			\begin{align}
			\begin{split}
			\int_0^T \iprod*{\indicator_{\Gamma}K_{h,\sigma}(X_t)}{\D X_t-\nu A_t X_t\,\D t}\hspace{-10em}&\\
			&\overset{\prob*{}}{=}\int_0^T \iprod*{\indicator_{\Gamma}K_{h,\sigma}(X_t)}{F(X_t)}\,\D t +
			 \sigma\int_0^T \iprod*{\indicator_{\Gamma}K_{h,\sigma}(X_t)}{\D W_t}.
			 \end{split}\label{eq:Dynamics_pluggedin}
		\end{align}
	\end{remark}
	\begin{remark}[On the decomposition of the estimation error]\label{rmk:ErrorDecomposition}
		In view of \eqref{eq:Dynamics_pluggedin}, it is natural to introduce the (stochastic) approximation error
		\begin{equation*}
B_{h,\sigma}\coloneqq \int_0^T\iprod*{\indicator_{\Gamma} K_{h,\sigma}(X_t)}{F(X_t)}\,\D t -f(x_0),
\end{equation*}
and the time martingales $\mathcal{M}_{h,\sigma}^{\smash{\pm}}$ with quadratic variations $\mathcal{I}_{h,\sigma}^{\smash{\pm}}$ by
\begin{equation*}
	\mathcal{M}_{h,\sigma}^{\smash{\pm}}\coloneqq \int_0^T\iprod*{\indicator_{\Gamma}K_{\pm,h}(X_t)}{\D W_t},\quad  \mathcal{I}_{h,\sigma}^{\smash{\pm}}\coloneqq \int_0^T\norm*{\indicator_{\Gamma}K_{\pm,h}(X_t)}_{\mathbb{H}}^2\,\D t.
	\end{equation*}
 With the short-hand notations
		\begin{equation*}
	A_{h,\sigma}^{\smash{\pm}}\coloneqq \frac{\mathcal{T}_{h,\sigma}^{\smash{\mp,2}}\EV{\mathcal{I}_{h,\sigma}^{\smash{\pm}}}^{1/2}}{\mathcal{J}_{h,\sigma}},\quad 
	Z_{h,\sigma}^{\smash{\pm}}\coloneqq \frac{\mathcal{M}_{h,\sigma}^{\smash{\pm}}}{\EV{\mathcal{I}_{h,\sigma}^{\smash{\pm}}}^{1/2}},
\end{equation*}
		the identity \eqref{eq:Dynamics_pluggedin} yields under $\prob{}$ the following decomposition of the estimation error:
		\begin{equation}
			\hat{f}(x_0)_{h,\sigma}-f(x_0) \overset{\prob{}}{=} B_{h,\sigma} + \sigma A_{h,\sigma}^{\smash{-}}Z_{h,\sigma}^{\smash{-}} + \sigma A_{h,\sigma}^{\smash{+}}Z_{h,\sigma}^{\smash{+}}.\label{eq:ErrorDecomposition}
\end{equation}
We refer to $B_{h,\sigma}$ as the \emph{approximation error} and to $\sigma A_{h,\sigma}^{\smash{-}}Z_{h,\sigma}^{\smash{-}} + \sigma A_{h,\sigma}^{\smash{+}}Z_{h,\sigma}^{\smash{+}}$ as the \emph{stochastic error}.
		\end{remark}

\pagebreak

		\begin{remark}[On knowing the diffusivity $\nu$]
			The estimator $\hat{f}(x_0)_{h,\sigma}$ relies on the knowledge of the diffusivity $\nu$. This can be justified since $\nu$ is known exactly given continuous observations \citep{Huebner1995,Altmeyer2020}. Consequently, given discrete observations at a sufficiently dense space-time grid, a plug-in estimator seems promising. A precise error analysis is non-trivial due to the loss of spatial smoothness as $\nu\to 0$ (compare Proposition 3.14 of \citet{gaudlitzEstimationReactionTerm2023} and Theorem 4.2 of \citet{Bibinger2020}), and is beyond the scope of this thesis.
		\end{remark}
		
			\begin{remark}[Implementation of $\hat{f}(x_0)_{h,\sigma}$ via (weighted) least squares]\label{rmk:PracticalImplementation}
			Given discrete data points $(X_{t_i}(y_k)| i=0,\dots,N, {k=0,\dots, M})$, the estimator $\hat{f}(x_0)_{h,\sigma}$ can be implemented via a (weighted) least squares approach: For bandwidth $h\sim \sigma^{2/3}$ chosen according to Corollary \ref{cor:OptimalRate} (below), define $\mathfrak{I}\coloneqq \Set{(i,k)\given X_{t_i}(y_k)\in \operatorname{supp}(K_{h,\sigma})}$. Estimate the weights $\mathcal{T}_{h,\sigma}^{\pm,\smash{\Set{1,2}}}$ and $\mathcal{J}_{h,\sigma}$ by Riemann sums to obtain an estimated kernel function $\hat{K}_{h,\sigma}$ from \eqref{eq:Kernel}. For every $(i,k)\in\mathfrak{I}$ compute an estimate $\widehat{A_{t_i} X_{t_i}(y_k)}$ of $A_{t_i} X_{t_i}(y_k)$, for example using finite differences, and let $Y_{t_i}(y_k)\coloneqq (X_{t_{i+1}}(y_k)-X_{t_{i}}(y_k)) / (t_{i+1}-t_i)$. The estimator $\hat{f}(x_0)_{h,\sigma}$ is approximated by the solution to the weighted least squares problem
			\begin{equation}
				\argmin_{\zeta\in\mathbb{R}}\sum_{(i,k)\in\mathfrak{I}}\hat{K}_{h,\sigma}(X_{t_i}(y_k))[Y_{t_i}(y_k) - \nu \widehat{A_{t_i}X_{t_i}(y_k)} - \zeta]^2,\label{eq:OLS}
			\end{equation}
			i.e.\
			\begin{equation*}
				\hat{f}(x_0)_{h,\sigma} =\frac{\sum_{(i,k)\in\mathfrak{I}}\hat{K}_{h,\sigma}(X_{t_i}(y_k))[Y_{t_i}(y_k) - \nu \widehat{A_{t_i}X_{t_i}(y_k)}]}{\sum_{(i,k)\in\mathfrak{I}}\hat{K}_{h,\sigma}(X_{t_i}(y_k))}.
			\end{equation*}
			Joint estimation of $\nu$ and $f(x_0)$ can be performed by simultaneously minimizing over $\nu$ in \eqref{eq:OLS} and choosing $h$ by cross-validation. A precise control of the discretisation error is left for future research.
		\end{remark}

	\subsection{Main results}
		
	This subsection contains the consistency result (Theorem \ref{thm:Nonparametric}), the optimal convergence rate (Corollary \ref{cor:OptimalRate} and Proposition \ref{prop:LowerBound}), and the central limit theorem (Corollary \ref{cor:CLT}) for the estimator $\hat{f}(x_0)_{h,\sigma}$ of $f(x_0)$ from \eqref{eq:Estimator}. Recall that we use $\sigma$ instead of $\nu$ to index the statistical quantities and that $\sigma=\sigma(\nu)\to 0$ as specified by Assumption \ref{assump:key} \hyperref[assump:key]{(noise-scaling)}.

		\begin{theorem}\label{thm:Nonparametric}
			Fix $1\le \beta\le 2$ and $L>0$. Grant Assumptions \ref{assump:WellPosedness} \hyperref[assump:WellPosedness]{(well-posedness)}, \ref{assump:key} \hyperref[assump:key]{(noise-scaling)}, \ref{assump:Kernel} \hyperref[assump:Kernel]{(kernel)} and assume $\sigma=\smallo(h)$, then the estimation error of $\hat{f}(x_0)_{h,\sigma}$ from \eqref{eq:Estimator} satisfies
			\begin{equation*}
				\hat{f}(x_0)_{h,\sigma}-f(x_0) = \mathcal{O}(h^{\beta}) + \mathcal{O}_{\prob{}}(\sigma h^{-1/2}),
			\end{equation*}
			uniformly in $f\in \Sigma(\beta,L)$, as $\nu\to 0$ and $\sigma=\sigma(\nu)\to 0$. More precisely, we can decompose the estimation error as
			\begin{equation}
				\hat{f}(x_0)_{h,\sigma}-f(x_0) = B_{h,\sigma} + \sigma A_{h,\sigma}^{\smash{-}}Z_{h,\sigma}^{\smash{-}} + \sigma A_{h,\sigma}^{\smash{+}}Z_{h,\sigma}^{\smash{+}},\label{eq:ErrorDecomposition2}
			\end{equation}
			where
			\begin{enumerate}[(a)]
				\item $B_{h,\sigma}= \mathcal{O}(h^{\beta})$,\label{num:MainThm_f}
				\item $Z_{h,\sigma}^{\smash{-}}= \mathcal{O}_{\prob{}}(1)$ and $Z_{h,\sigma}^{\smash{+}}= \mathcal{O}_{\prob{}}(1)$,\label{num:MainThm_b}
				\item $A_{h,\sigma}^{\smash{-}} = \mathcal{O}_{\prob{}}(h^{-1/2})$ and $A_{h,\sigma}^{\smash{+}} = \mathcal{O}_{\prob{}}(h^{-1/2})$.\label{num:MainThm_c}
			\end{enumerate}
			If, additionally, Assumption \ref{assump:CLT} \hyperref[assump:CLT]{(noise covariance function)} is satisfied, then
			\begin{enumerate}[(a)]\setcounter{enumi}{3}
				\item $A_{h,\sigma}^{\smash{-}} \sim h^{-1/2} + \smallo_{\prob{}}(h^{-1/2})$ and $A_{h,\sigma}^{\smash{+}} \sim h^{-1/2} + \smallo_{\prob{}}(h^{-1/2})$,\label{num:MainThm_e}
				\item $Z_{h,\sigma}^{\smash{-}}$ and $Z_{h,\sigma}^{\smash{+}}$ are uncorrelated random variables,\label{num:MainThm_a}
				\item $(Z_{h,\sigma}^{\smash{-}},Z_{h,\sigma}^{\smash{+}})\transpose\xrightarrow{d} N(0,\operatorname{Id}_{2\times 2})$, where $\operatorname{Id}_{2\times 2}$ is the identity matrix in $\mathbb{R}^{2\times 2}$.\label{num:MainThm_d}
			\end{enumerate}
		\end{theorem}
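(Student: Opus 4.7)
The plan is to verify the decomposition \eqref{eq:ErrorDecomposition2} (which is inherited from Remark \ref{rmk:ErrorDecomposition}), and then to treat each of the six claims separately, relying essentially on (i) the construction of $K_{h,\sigma}$, which enforces the moment‐type conditions (I)--(III), (ii) the variance bound of Proposition \ref{prop:VarianceBound} together with the density bounds of Corollary \ref{cor:DensityBounds}, and (iii) standard martingale tools.

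For the approximation error \textbf{(a)}, I would use (II) to insert $-f(x_0)$ into the integral against $K_{h,\sigma}$, and then (III) to subtract the linear term $f'(x_0)(X_t(y)-x_0)$ without cost when $\beta>1$. Since $\operatorname{supp}(K_{h,\sigma})\subset[x_0-h,x_0+h]$, a Taylor expansion of $f\in\Sigma(\beta,L)$ leaves a remainder bounded pointwise by $L|X_t(y)-x_0|^\beta\le L h^\beta$ on the support, and (II) (the kernel is a probability density in time--space) gives $|B_{h,\sigma}|\le Lh^\beta$. For $\beta=1$ no cancellation is needed and the bound comes directly from the Lipschitz property.

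For \textbf{(b)}, the Itô isometry on $\mathbb{H}$ gives $\mathrm{E}[(\mathcal{M}^{\pm}_{h,\sigma})^2]=\mathrm{E}[\mathcal{I}^{\pm}_{h,\sigma}]$, so $\mathrm{E}[(Z^{\pm}_{h,\sigma})^2]=1$ and tightness follows. For \textbf{(c)} and \textbf{(d)} I would compute expected orders using Corollary \ref{cor:DensityBounds} and the scalings of Remark \ref{rmk:Scaling}: the upper density bound yields $\mathrm{E}[\mathcal{T}^{\pm,1}_{h,\sigma}]\lesssim h$, $\mathrm{E}[\mathcal{T}^{\pm,2}_{h,\sigma}]\lesssim h^2$, $\mathrm{E}[\mathcal{I}^{\pm}_{h,\sigma}]\lesssim h$ (an extra factor $\bar{\Sigma}^2$ enters via $\norm{\cdot}_{\mathbb{H}}$), and under Assumption \ref{assump:CLT} \hyperref[assump:CLT]{(Noise covariance function)} the analogous lower bounds hold with $\underbar{\Sigma}^2$. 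Concentration of $\mathcal{T}^{\pm,1}_{h,\sigma}$ around $\mathrm{E}[\mathcal{T}^{\pm,1}_{h,\sigma}]$ follows from Proposition \ref{prop:VarianceBound} (variance $\mathcal{O}(\sigma^2)$, mean $\sim h$, ratio $\sigma/h=\smallo(1)$), and an identical argument applied to the kernel $x\mapsto K_{\pm,h}(x)(x-x_0)$ (still Lipschitz, still supported in $[x_0-h,x_0+h]$) handles $\mathcal{T}^{\pm,2}_{h,\sigma}$. Hence $\mathcal{T}^{\pm,1}_{h,\sigma}\sim h$, $\mathcal{T}^{\pm,2}_{h,\sigma}\sim h^2$ and $\mathcal{J}_{h,\sigma}\sim h^3$ in probability, which yields $A^{\pm}_{h,\sigma}\sim h^{-1/2}$ under Assumption \ref{assump:CLT} and the one--sided statement of \textbf{(c)} otherwise.

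For \textbf{(e)} I would use that under Assumption \ref{assump:CLT} the operator $B$ is pointwise multiplication by $\Sigma$, so the cross--variation is
\begin{equation*}
	\langle \mathcal{M}^{-}_{h,\sigma},\mathcal{M}^{+}_{h,\sigma}\rangle_T = \int_0^T\int_\Gamma \Sigma(y)^2 K_{-,h}(X_t(y))K_{+,h}(X_t(y))\,\D y\D t = 0
\end{equation*}
almost surely by the disjoint--support condition in Assumption \ref{assump:Kernel} \hyperref[assump:Kernel]{(Kernel)}; taking expectations gives $\mathrm{E}[Z^{-}_{h,\sigma}Z^{+}_{h,\sigma}]=0$. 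For the CLT \textbf{(f)} I would apply a martingale CLT (e.g. Theorem 3.2 in Hall--Heyde): the normalisation $\mathcal{I}^{\pm}_{h,\sigma}/\mathrm{E}[\mathcal{I}^{\pm}_{h,\sigma}]\to 1$ in probability is, once again, a consequence of Proposition \ref{prop:VarianceBound} applied to the squared kernel $K_{\pm,h}^2$, and a Lindeberg condition follows from $\norm*{\indicator_{\Gamma}K_{\pm,h}(X_t)}_{\mathbb{H}}^2\le \bar{\Sigma}^2\norm{K_\pm}_\infty^2|\Gamma|$ together with $\mathrm{E}[\mathcal{I}^\pm]\to 0$ only polynomially. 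Joint convergence in $\mathbb{R}^2$ then follows from the vanishing cross--variation via the Cramér--Wold device. The main obstacle I expect is the proper transfer of the ``delta--sequence'' variance bound of Proposition \ref{prop:VarianceBound} to the slightly more complicated integrands appearing in $\mathcal{T}^{\pm,2}_{h,\sigma}$ and in the quadratic variation $\mathcal{I}^{\pm}_{h,\sigma}$, and checking that the resulting random quantities truly concentrate at the required relative rate $\sigma/h\to 0$ simultaneously.
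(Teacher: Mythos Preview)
Your proposal is correct and follows essentially the same route as the paper: the approximation bound via the kernel moment conditions (I)--(III), the concentration of $\mathcal{T}^{\pm,j}_{h,\sigma}$ and $\mathcal{I}^{\pm}_{h,\sigma}$ via Proposition \ref{prop:VarianceBound} and Corollary \ref{cor:DensityBounds} (packaged in the paper as Lemma \ref{lem:DetailsStochasticError}), the vanishing cross--variation from disjoint supports, and the Cram\'er--Wold reduction to a martingale CLT. One small simplification: since the martingales $\mathcal{M}^{\pm}_{h,\sigma}$ have continuous paths, the martingale CLT (the paper cites Theorem 5.5.4 in \citep{Liptser1989}) requires only the convergence $\mathcal{I}^{\pm}_{h,\sigma}/\mathrm{E}[\mathcal{I}^{\pm}_{h,\sigma}]\to 1$ in probability, so you need not verify a separate Lindeberg condition.
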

		Before proving Theorem \ref{thm:Nonparametric}, we show the consequences of the spatial ergodicity at the concrete example of $\mathcal{T}_{h,\sigma}^{\smash{+,1}}=\int_0^T\int_\Gamma K_{+,h}(X_t(y))\,\D y\D t$, a key ingredient of the estimator $\hat{f}(x_0)_{h,\sigma}$ from \eqref{eq:Estimator}. In Proposition \ref{prop:DensityBounds} (below), we deduce upper and lower (Gaussian) bounds for the (Lebesgue-) density $p_{\nu,t,y}$ of $X_t(y)$, that are uniform in $0<\nu\le \bar{\nu}$. In the form provided by Corollary \ref{cor:DensityBounds} (below), these bounds imply
		\begin{equation*}
			\EV{\mathcal{T}_{h,\sigma}^{\smash{+,1}}} = \int_0^T\int_\Gamma \int_{\mathbb{R}}K_{+,h}(x)p_{\nu,t,y}(x)\,\D x\D y\D t
			\begin{cases}
			\lesssim \int_0^T \int_\Gamma \int_{\mathbb{R}}K_{+,h}(x)t^{-\alpha}\,\D x\D y\D t\lesssim \norm{K_{+,h}}_{L^1(\mathbb{R})},\\
			\gtrsim \int_\Delta^{t_0} \int_\Gamma \int_{\mathbb{R}}K_{+,h}(x)\,\D x\D y\D t\gtrsim \norm{K_{+,h}}_{L^1(\mathbb{R})},			
			\end{cases}
		\end{equation*}
		for $0<\Delta<t_0\le T$ as in Corollary \ref{cor:DensityBounds}. Consequently, we find 
		\begin{equation}
			\EV{\mathcal{T}_{h,\sigma}^{\smash{+,1}}}\sim \norm{K_{+,h}}_{L^1(\mathbb{R})} \sim h,\quad h>0,\label{eq:exampleExpectation}
		\end{equation}
		uniformly in $0<\nu\le\bar{\nu}$. Importantly, we will see in Proposition \ref{prop:VarianceBound} (below) that we can bound the variance of the spatial average
		\begin{equation}
			\var\bigg(\int_\Gamma K_{+,h}(X_t(y))\,\D y\bigg)\lesssim \sigma^2 \norm{K_{+,h}'}_{L^1(\mathbb{R})}^2\lesssim \sigma^2,\label{eq:exampleVariance}
		\end{equation}
		uniformly in $0\le t\le T$ and $h>0$. This concentration is due to the (asymptotic) spatial ergodicity of the process $y\to X_t(y)$ for $0<t\le T$ fixed. Since the bound \eqref{eq:exampleVariance} is uniform in $0\le t\le T$, it persists after integrating in time and carries over to $\mathcal{T}_{h,\sigma}^{\smash{+,1}}$ such that
		\begin{equation}
		\var(\mathcal{T}_{h,\sigma}^{\smash{+,1}})\lesssim  \sigma^2.\label{eq:exampleVariance_2}
		\end{equation}
		By combining \eqref{eq:exampleExpectation} with \eqref{eq:exampleVariance} and Chebychev's inequality, we obtain the convergence
		\begin{equation*}
			\frac{\mathcal{T}_{h,\sigma}^{\smash{+,1}}}{\EV{\mathcal{T}_{h,\sigma}^{\smash{+,1}}}}\xrightarrow{\prob{}}1,
		\end{equation*}
		as $\nu\to 0$, provided $\sigma= \sigma(\nu) = \smallo(h)$. This procedure is applied to the other weights $\mathcal{T}_{h,\sigma}^{\smash{+,2}}$, $\mathcal{T}_{h,\sigma}^{\smash{-,1}}$, $\mathcal{T}_{h,\sigma}^{\smash{-,2}}$, as well as to the quadratic variations $\mathcal{I}_{h,\sigma}^+$ and $\mathcal{I}_{h,\sigma}^-$ in Lemma \ref{lem:DetailsStochasticError} (below).
		\begin{proof}[Proof of Theorem \ref{thm:Nonparametric}]~\\%
	Recall that the decomposition of the estimation error from \eqref{eq:ErrorDecomposition} yields \eqref{eq:ErrorDecomposition2}.\\
	\textbf{Step 1} (Controlling the approximation error $B_{h,\sigma}$ and proving (\ref{num:MainThm_f})).\\
Fix some $h>0$. Using the properties of the kernel $K_{h,\sigma}$ from (\ref{num_RmkIntegrate_a})-(\ref{num_RmkIntegrate_c}), the approximation error can be controlled by standard arguments. An application of Property (\ref{num_RmkIntegrate_b}) shows
		\begin{equation*}
			\abs{B_{h,\sigma}}= \abs[\bigg]{\int_0^T \int_{\Gamma} K_{h,\sigma}(X_t(y))[f(X_t(y)) - f(x_0)]\,\D y\D t}.
		\end{equation*}
		In the case of $\beta=1$, note that $\operatorname{supp}(K_{h,\sigma})\subset [x_0-h,x_0+h]$. By Properties (\ref{num_RmkIntegrate_a}) and (\ref{num_RmkIntegrate_b}), we obtain the bound $\abs{B_{h,\sigma}}\le L h$. For $1<\beta\le 2$, the Property (\ref{num_RmkIntegrate_c}) yields with some $\tilde{x}_{t,y}$ between $X_t(y)$ and $x_0$ the bound
		\begin{equation*}
		\abs{B_{h,\sigma}}= \abs[\bigg]{\int_0^T \int_{\Gamma} K_{h,\sigma}(X_t(y))[f'(\tilde{x}_{t,y}) - f'(x_0)](X_t(y)-x_0)\,\D y\D t}
			\le  L h^{\beta}.
		\end{equation*}
	\textbf{Step 2} (Controlling the stochastic error and proving (\ref{num:MainThm_b})-(\ref{num:MainThm_a})).\\
The property (\ref{num:MainThm_b}) follows from Markov's inequality. Lemma \ref{lem:DetailsStochasticError} (\ref{num:AuxLemma_a})-(\ref{num:AuxLemma_c}) show that the property (\ref{num:MainThm_c}) is a consequence of the positivity of $K_{h,\sigma}$, the bounds on the marginal densities of $X$ and the concentration of $\mathcal{J}_{h,\sigma}$ as $\nu\to 0$. Lemma \ref{lem:DetailsStochasticError} (\ref{num:AuxLemma_a}), (\ref{num:AuxLemma_b}) and (\ref{num:AuxLemma_d}) imply that the property (\ref{num:MainThm_e}) is a consequence of the concentration due to (asymptotic) spatial ergodicity as $\nu \to 0$. The property (\ref{num:MainThm_a}) follows, provided Assumption \ref{assump:CLT} \hyperref[assump:CLT]{(noise covariance function)}, by the (almost) disjoint supports of the kernels $K_-$ and $K_+$.\\
	\textbf{Step 3} (Controlling the stochastic error and proving (\ref{num:MainThm_d})).
It is left to show (\ref{num:MainThm_d}) under Assumption \ref{assump:CLT} \hyperref[assump:CLT]{(noise covariance function)}. To this end, fix $a,b\in\mathbb{R}$ and write
\begin{equation*}
	aZ_{h,\sigma}^{\smash{-}} + bZ_{h,\sigma}^{\smash{+}}=\frac{\int_0^T\iprod{a\EV{\mathcal{I}_{h,\sigma}^{\smash{+}}}^{1/2}K_{-,h}(X_t) + b\EV{\mathcal{I}_{h,\sigma}^{\smash{-}}}^{1/2}K_{+,h}(X_t)}{\indicator_{\Gamma}\D W_t}}{\EV{\mathcal{I}_{h,\sigma}^{\smash{-}}}^{1/2}\EV{\mathcal{I}_{h,\sigma}^{\smash{+}}}^{1/2}}.
\end{equation*}
Due to the (almost) disjoint supports of $K_-$ and $K_+$, the quadratic variation of the martingale in the numerator is given by
\begin{align*}
	\int_0^T\norm[\big]{\Sigma \indicator_{\Gamma}\big( a\EV{\mathcal{I}_{h,\sigma}^{\smash{+}}}^{1/2}K_{-,h}(X_t) + b\EV{\mathcal{I}_{h,\sigma}^{\smash{-}}}^{1/2}K_{+,h}(X_t)\big)}^2\,\D t \hspace{-13em}&\\
		&= \int_0^T\norm[\big]{\Sigma \indicator_{\Gamma}a\EV{\mathcal{I}_{h,\sigma}^{\smash{+}}}^{1/2}K_{-,h}(X_t)}^2\,\D t + \int_0^T\norm[\big]{\Sigma \indicator_{\Gamma}b\EV{\mathcal{I}_{h,\sigma}^{\smash{-}}}^{1/2}K_{+,h}(X_t))}^2\,\D t\\
		&= \EV{\mathcal{I}_{h,\sigma}^{\smash{+}}}\mathcal{I}_{h,\sigma}^{\smash{-}}a^2  + \EV{\mathcal{I}_{h,\sigma}^{\smash{-}}}\mathcal{I}_{h,\sigma}^{\smash{+}} b^2.
\end{align*}
By Lemma \ref{lem:DetailsStochasticError} (\ref{num:AuxLemma_d}), the quadratic variation satisfies
\begin{equation*}
	\frac{\EV{\mathcal{I}_{h,\sigma}^{\smash{+}}}\mathcal{I}_{h,\sigma}^{\smash{-}}a^2  + \EV{\mathcal{I}_{h,\sigma}^{\smash{-}}}\mathcal{I}_{h,\sigma}^{\smash{+}} b^2}{\EV{\mathcal{I}_{h,\sigma}^{\smash{+}}}\EV{\mathcal{I}_{h,\sigma}^{\smash{-}}}(a^2+b^2)}\xrightarrow{\prob{}}1
\end{equation*}
as $\nu\to 0$ due to the asymptotic spatial ergodicity. An application of a martingale central limit theorem (Theorem 5.5.4 of \citet{Liptser1989}) yields $aZ_{h,\sigma}^{\smash{-}} + bZ_{h,\sigma}^{\smash{+}}\xrightarrow{d} N(0,a^2+b^2)$ and concludes the proof of (\ref{num:MainThm_d}).
		\end{proof}

		\begin{corollary}\label{cor:OptimalRate}
			Grant Assumptions \ref{assump:WellPosedness} \hyperref[assump:WellPosedness]{(well-posedness)}, \ref{assump:key} \hyperref[assump:key]{(noise-scaling)}, \ref{assump:Kernel} \hyperref[assump:Kernel]{(kernel)}, fix $1\le\beta\le 2$ and $L>0$. Assume that $h\sim \sigma^{2/(1+2\beta)}$, then
			\begin{equation*}
				\hat{f}(x_0)_{h,\sigma}-f(x_0) = \mathcal{O}_{\prob{}}((\sigma^2)^{\smash{\beta/(1+2\beta)}})
			\end{equation*}
			uniformly in $f\in \Sigma(\beta,L)$ as $\nu\to 0$.
		\end{corollary}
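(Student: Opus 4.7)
The plan is to apply Theorem \ref{thm:Nonparametric} directly, since the corollary is precisely the standard bias–variance balance corresponding to the two error terms established there. The approach has no serious probabilistic content beyond what is already done; the only work is algebraic bookkeeping plus a check that the hypothesis $\sigma = \smallo(h)$ of Theorem \ref{thm:Nonparametric} is satisfied by the chosen bandwidth.

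First I would verify the admissibility of $h \sim \sigma^{2/(1+2\beta)}$. Since $\beta \in [1,2]$,
\begin{equation*}
  \frac{\sigma}{h} \sim \sigma^{1 - 2/(1+2\beta)} = \sigma^{(2\beta-1)/(1+2\beta)} \longrightarrow 0
\end{equation*}
as $\sigma \to 0$, because the exponent $(2\beta-1)/(1+2\beta)$ is strictly positive for $\beta \ge 1$. Hence Theorem \ref{thm:Nonparametric} applies and we may invoke the decomposition \eqref{eq:ErrorDecomposition2} together with the rates (\ref{num:MainThm_f})--(\ref{num:MainThm_c}).

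Next I would balance the two contributions. The approximation error satisfies $B_{h,\sigma} = \mathcal{O}(h^\beta)$ by (\ref{num:MainThm_f}), uniformly in $f \in \Sigma(\beta,L)$. The stochastic error $\sigma A_{h,\sigma}^{\smash{\pm}} Z_{h,\sigma}^{\smash{\pm}}$ is of order $\mathcal{O}_{\prob{}}(\sigma h^{-1/2})$ by (\ref{num:MainThm_b}) and (\ref{num:MainThm_c}). With $h \sim \sigma^{2/(1+2\beta)}$ one computes
\begin{equation*}
  h^{\beta} \sim \sigma^{2\beta/(1+2\beta)} = (\sigma^2)^{\beta/(1+2\beta)}, \qquad \sigma h^{-1/2} \sim \sigma^{1 - 1/(1+2\beta)} = \sigma^{2\beta/(1+2\beta)} = (\sigma^2)^{\beta/(1+2\beta)},
\end{equation*}
so the two contributions are of the same order.

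Combining the bounds via \eqref{eq:ErrorDecomposition2} then yields
\begin{equation*}
  \hat{f}(x_0)_{h,\sigma} - f(x_0) = \mathcal{O}((\sigma^2)^{\beta/(1+2\beta)}) + \mathcal{O}_{\prob{}}((\sigma^2)^{\beta/(1+2\beta)}) = \mathcal{O}_{\prob{}}((\sigma^2)^{\beta/(1+2\beta)}),
\end{equation*}
uniformly in $f \in \Sigma(\beta,L)$, which is the claim. There is no real obstacle here; the only subtlety worth flagging is that the uniformity over $\Sigma(\beta,L)$ in the bias bound is already built into Theorem \ref{thm:Nonparametric} (the constant in $\mathcal{O}(h^\beta)$ depends only on $L$), while the stochastic term is independent of $f$ at leading order, so the final $\mathcal{O}_{\prob{}}$ constant is genuinely uniform in the Hölder ball.
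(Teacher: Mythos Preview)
Your proof is correct and matches the paper's approach, which is the one-line statement ``The claim follows from Theorem \ref{thm:Nonparametric}.'' You have simply spelled out the bias--variance balance and the verification that $\sigma=\smallo(h)$ that this one line entails.
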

		\begin{proof}
			The claim follows from Theorem \ref{thm:Nonparametric}.
		\end{proof}

		\begin{example}[Continued]
			For Examples \ref{examp:main} (\ref{num_Examp_a})-(\ref{num_Examp_c}), the RMSE-optimised rates are given as follows.
				\begin{enumerate}[(a)]
						\item Stochastic heat equation with space-time white noise: $\sigma^{\smash{2\beta/(1+2\beta)}} = \nu^{\smash{\beta/(2+4\beta)}}$,
						\item Riesz kernel (coloured noise I): $\sigma^{\smash{2\beta/(1+2\beta)}} = \nu^{\smash{\rho\beta/(2+4\beta)}}$ and
						\item Spectral dispersion (coloured noise II): $\sigma^{\smash{2\beta/(1+2\beta)}} = \nu^{\smash{\beta(1-2\rho_2)/(\rho_1(1+2\beta))}}$.
						\item The rates in (a) and (b) are the same for Neumann instead of Dirichlet boundary conditions.
					\end{enumerate}
				\end{example}		
		
		\begin{corollary}\label{cor:CLT}
			Grant Assumptions \ref{assump:WellPosedness} \hyperref[assump:WellPosedness]{(well-posedness)}, \ref{assump:key} \hyperref[assump:key]{(noise-scaling)}, \ref{assump:CLT} \hyperref[assump:CLT]{(noise covariance function)}, \ref{assump:Kernel} \hyperref[assump:Kernel]{(kernel)}, fix $1\le\beta\le 2$ and $L>0$. Assume that $\sigma=\smallo(h)$ and $h=\smallo(\sigma^{2/(1+2\beta)})$. Then, for all $f\in \Sigma(\beta,L)$, we find
			\begin{equation*}
				\frac{\mathcal{J}_{h,\sigma}}{\sigma\sqrt{(\mathcal{T}_{h,\sigma}^{\smash{+,2}})^2\mathcal{I}_{h,\sigma}^{\smash{-}}+(\mathcal{T}_{h,\sigma}^{\smash{-,2}})^2\mathcal{I}_{h,\sigma}^{\smash{+}}}}(\hat{f}(x_0)_{h,\sigma}-f(x_0)) \xrightarrow{d} N(0,1),
			\end{equation*}
			 as $\nu\to 0$.
		\end{corollary}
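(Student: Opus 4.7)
The plan is to start from the error decomposition \eqref{eq:ErrorDecomposition2} of Theorem \ref{thm:Nonparametric}, show that the new bandwidth condition $h=\smallo(\sigma^{2/(1+2\beta)})$ makes the normalised bias negligible, and handle the normalised stochastic error by combining the joint Gaussian limit (\ref{num:MainThm_d}) of Theorem \ref{thm:Nonparametric} with the spatial-ergodicity-based concentration of the random weights $\mathcal{T}_{h,\sigma}^{\pm,2}$, $\mathcal{I}_{h,\sigma}^{\pm}$ from Lemma \ref{lem:DetailsStochasticError}.

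Denoting the left-hand side normalisation by $N_{h,\sigma}$, a direct computation from $A_{h,\sigma}^{\pm}=\mathcal{T}_{h,\sigma}^{\mp,2}\EV{\mathcal{I}_{h,\sigma}^{\pm}}^{1/2}/\mathcal{J}_{h,\sigma}$ and $Z_{h,\sigma}^{\pm}=\mathcal{M}_{h,\sigma}^{\pm}/\EV{\mathcal{I}_{h,\sigma}^{\pm}}^{1/2}$ yields the self-normalised identity
\begin{equation*}
N_{h,\sigma}\sigma(A_{h,\sigma}^{-}Z_{h,\sigma}^{-}+A_{h,\sigma}^{+}Z_{h,\sigma}^{+})=\frac{\mathcal{T}_{h,\sigma}^{+,2}\mathcal{M}_{h,\sigma}^{-}+\mathcal{T}_{h,\sigma}^{-,2}\mathcal{M}_{h,\sigma}^{+}}{\sqrt{(\mathcal{T}_{h,\sigma}^{+,2})^{2}\mathcal{I}_{h,\sigma}^{-}+(\mathcal{T}_{h,\sigma}^{-,2})^{2}\mathcal{I}_{h,\sigma}^{+}}}.
\end{equation*}
From the order-of-magnitude estimates $\mathcal{T}_{h,\sigma}^{\pm,1}\sim h$, $\mathcal{T}_{h,\sigma}^{\pm,2}\sim h^{2}$, $\mathcal{I}_{h,\sigma}^{\pm}\sim h$ and $\mathcal{J}_{h,\sigma}\sim h^{3}$ in probability (as recorded in Lemma \ref{lem:DetailsStochasticError}), one obtains $N_{h,\sigma}=\mathcal{O}_{\prob{}}(h^{1/2}/\sigma)$. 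Combining this with $B_{h,\sigma}=\mathcal{O}(h^{\beta})$ from Theorem \ref{thm:Nonparametric}(\ref{num:MainThm_f}) and rewriting $h=\smallo(\sigma^{2/(1+2\beta)})$ as $h^{\beta+1/2}=\smallo(\sigma)$ gives $N_{h,\sigma}B_{h,\sigma}\xrightarrow{\prob{}}0$.

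For the stochastic part, the concentration $\mathcal{T}_{h,\sigma}^{\pm,2}/\EV{\mathcal{T}_{h,\sigma}^{\pm,2}}\xrightarrow{\prob{}}1$ and $\mathcal{I}_{h,\sigma}^{\pm}/\EV{\mathcal{I}_{h,\sigma}^{\pm}}\xrightarrow{\prob{}}1$ (again provided by Lemma \ref{lem:DetailsStochasticError}) together with $Z_{h,\sigma}^{\pm}=\mathcal{O}_{\prob{}}(1)$ permits replacing all random weights by their expectations, whence the right-hand side of the display above equals $\bar{c}_{h}Z_{h,\sigma}^{-}+\bar{d}_{h}Z_{h,\sigma}^{+}+\smallo_{\prob{}}(1)$, where
\begin{equation*}
\bar{c}_{h}=\frac{\EV{\mathcal{T}_{h,\sigma}^{+,2}}\EV{\mathcal{I}_{h,\sigma}^{-}}^{1/2}}{\sqrt{\EV{\mathcal{T}_{h,\sigma}^{+,2}}^{2}\EV{\mathcal{I}_{h,\sigma}^{-}}+\EV{\mathcal{T}_{h,\sigma}^{-,2}}^{2}\EV{\mathcal{I}_{h,\sigma}^{+}}}},
\end{equation*}
and $\bar{d}_{h}$ is defined analogously, so that $\bar{c}_{h}^{2}+\bar{d}_{h}^{2}=1$. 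By Theorem \ref{thm:Nonparametric}(\ref{num:MainThm_d}), $(Z_{h,\sigma}^{-},Z_{h,\sigma}^{+})\transpose\xrightarrow{d}N(0,\operatorname{Id}_{2\times 2})$, and Lévy's continuity theorem ensures that the joint characteristic function $\psi_{h}$ converges to $(a,b)\mapsto e^{-(a^{2}+b^{2})/2}$ uniformly on compact subsets of $\mathbb{R}^{2}$; since $(\bar{c}_{h},\bar{d}_{h})$ stays on the unit circle, evaluating at $(t\bar{c}_{h},t\bar{d}_{h})$ for $t\in\mathbb{R}$ gives $\EV{\exp(it(\bar{c}_{h}Z_{h,\sigma}^{-}+\bar{d}_{h}Z_{h,\sigma}^{+}))}=\psi_{h}(t\bar{c}_{h},t\bar{d}_{h})\to e^{-t^{2}/2}$. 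Slutsky's theorem then combines the stochastic part with the negligible bias to yield the claim.

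The main obstacle is that the coefficients $(\bar{c}_{h},\bar{d}_{h})$ multiplying $(Z_{h,\sigma}^{-},Z_{h,\sigma}^{+})$ are $h$-dependent and need not themselves converge, so a plain Cramér--Wold combined with Slutsky does not apply verbatim; boundedness on the unit circle together with the uniform-on-compacts convergence of characteristic functions (ensured by continuity of the Gaussian limit) is what circumvents this issue without having to extract subsequences.
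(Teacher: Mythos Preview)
Your proof is correct and follows essentially the same route as the paper: both start from the error decomposition, use the orders from Lemma \ref{lem:DetailsStochasticError} to kill the normalised bias under the undersmoothing condition $h=\smallo(\sigma^{2/(1+2\beta)})$, replace the random weights by their expectations via spatial-ergodicity concentration, and then invoke the joint Gaussian limit from Theorem \ref{thm:Nonparametric}(\ref{num:MainThm_d}) together with Slutsky. Your treatment of the $h$-dependent coefficients $(\bar{c}_h,\bar{d}_h)$ via uniform-on-compacts convergence of characteristic functions is in fact more explicit than the paper's proof, which simply cites Slutsky at this step without addressing that the deterministic coefficients need not converge.
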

		\begin{proof}
			See Subsection \ref{subsec:Proofs_Main}.
		\end{proof}
			\begin{corollary}\label{cor:Test}
				Grant Assumptions \ref{assump:WellPosedness} \hyperref[assump:WellPosedness]{(well-posedness)}, \ref{assump:key} \hyperref[assump:key]{(noise-scaling)}, \ref{assump:CLT} \hyperref[assump:CLT]{(noise covariance function)}, \ref{assump:Kernel} \hyperref[assump:Kernel]{(kernel)}, fix $1\le\beta\le 2$ and $L>0$. Assume that $\sigma=\smallo(h)$ and $h=\smallo(\sigma^{2/(1+2\beta)})$. Let $f\in\Sigma(\beta,L)$, $0<\bar{\alpha}<1$ and denote by $q_{1-\bar{\alpha}/2}$ the $(1-\bar{\alpha}/2)$-standard normal quantile.
					\begin{enumerate}[(a)]
					\item Then 
					\begin{align*}
						\mathcal{A}_{h,\sigma}^{\bar{\alpha}}\coloneqq \bigg[\hat{f}(x_0)_{h,\sigma}&-\sigma \sqrt{(\mathcal{T}_{h,\sigma}^{\smash{+,2}})^2\mathcal{I}_{h,\sigma}^{\smash{-}}+(\mathcal{T}_{h,\sigma}^{\smash{-,2}})^2\mathcal{I}_{h,\sigma}^{\smash{+}}}q_{1-\bar{\alpha}/2}/\mathcal{J}_{h,\sigma},\\
						\hat{f}(x_0)_{h,\sigma}&+\sigma \sqrt{(\mathcal{T}_{h,\sigma}^{\smash{+,2}})^2\mathcal{I}_{h,\sigma}^{\smash{-}}+(\mathcal{T}_{h,\sigma}^{\smash{-,2}})^2\mathcal{I}_{h,\sigma}^{\smash{+}}}q_{1-\bar{\alpha}/2}/\mathcal{J}_{h,\sigma}\bigg]
					\end{align*}
					are asymptotic $(1-\bar{\alpha})$-confidence intervals.	
					\item For fixed $\zeta\in\mathbb{R}$ introduce the test statistic
					\begin{equation*}
						\mathcal{T}_{h,\sigma}\coloneqq \frac{\mathcal{J}_{h,\sigma}}{\sigma\sqrt{(\mathcal{T}_{h,\sigma}^{\smash{+,2}})^2\mathcal{I}_{h,\sigma}^{\smash{-}}+(\mathcal{T}_{h,\sigma}^{\smash{-,2}})^2\mathcal{I}_{h,\sigma}^{\smash{+}}}}(\hat{f}(x_0)_{h,\sigma}-\zeta).
					\end{equation*}
					Then the test of $H_0\colon f(x_0) = \zeta$ versus $H_1\colon f(x_0)\neq \zeta$ defined by
					\begin{equation*}
						\Psi\coloneqq \indicator(\mathcal{T}_{h,\sigma}\not\in  [-q_{1-\bar{\alpha}/2},q_{1-\bar{\alpha}/2}])
					\end{equation*}
					is asymptotically of level $\bar{\alpha}$.
			\end{enumerate}					 
			\end{corollary}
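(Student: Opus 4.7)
The plan is to derive both claims directly from the central limit theorem in Corollary~\ref{cor:CLT}. To unify the notation, I would introduce the studentized quantity
\begin{equation*}
	T_{h,\sigma}(\zeta)\coloneqq \frac{\mathcal{J}_{h,\sigma}}{\sigma\sqrt{(\mathcal{T}_{h,\sigma}^{\smash{+,2}})^2\mathcal{I}_{h,\sigma}^{\smash{-}}+(\mathcal{T}_{h,\sigma}^{\smash{-,2}})^2\mathcal{I}_{h,\sigma}^{\smash{+}}}}(\hat{f}(x_0)_{h,\sigma}-\zeta),\quad \zeta\in\mathbb{R},
\end{equation*}
so that Corollary~\ref{cor:CLT} reads $T_{h,\sigma}(f(x_0))\xrightarrow{d}N(0,1)$ and the test statistic of part (b) is exactly $\mathcal{T}_{h,\sigma}=T_{h,\sigma}(\zeta)$.

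For part (a), I would first observe that, upon rearranging the two inequalities defining $\mathcal{A}_{h,\sigma}^{\bar{\alpha}}$ and dividing by the (almost surely positive) prefactor of $q_{1-\bar{\alpha}/2}$, the event $\{f(x_0)\in \mathcal{A}_{h,\sigma}^{\bar{\alpha}}\}$ coincides with $\{|T_{h,\sigma}(f(x_0))|\le q_{1-\bar{\alpha}/2}\}$. Combining the convergence $T_{h,\sigma}(f(x_0))\xrightarrow{d}N(0,1)$ with the continuity of the standard normal distribution function $\Phi$ at $\pm q_{1-\bar{\alpha}/2}$, the Portmanteau theorem yields
\begin{equation*}
	\prob*{f(x_0)\in \mathcal{A}_{h,\sigma}^{\bar{\alpha}}}\longrightarrow \Phi(q_{1-\bar{\alpha}/2})-\Phi(-q_{1-\bar{\alpha}/2})=1-\bar{\alpha},
\end{equation*}
which is precisely the claimed asymptotic coverage.

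For part (b), under $H_0\colon f(x_0)=\zeta$ the identity $\mathcal{T}_{h,\sigma}=T_{h,\sigma}(f(x_0))$ holds exactly, so Corollary~\ref{cor:CLT} delivers $\mathcal{T}_{h,\sigma}\xrightarrow{d}N(0,1)$ under $\prob{}$. A second application of the Portmanteau theorem to the open set $\mathbb{R}\setminus[-q_{1-\bar{\alpha}/2},q_{1-\bar{\alpha}/2}]$ then gives
\begin{equation*}
	\prob*{\Psi=1}=\prob*{\mathcal{T}_{h,\sigma}\notin [-q_{1-\bar{\alpha}/2},q_{1-\bar{\alpha}/2}]}\longrightarrow \bar{\alpha},
\end{equation*}
establishing that $\Psi$ is asymptotically of level $\bar{\alpha}$.

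There is no genuine obstacle here: both parts reduce to Corollary~\ref{cor:CLT} together with continuity of $\Phi$ at the quantiles $\pm q_{1-\bar{\alpha}/2}$. The only care required is the bookkeeping needed to identify the containment $\{f(x_0)\in\mathcal{A}_{h,\sigma}^{\bar{\alpha}}\}$ (respectively the test statistic $\mathcal{T}_{h,\sigma}$) with the standardized quantity $T_{h,\sigma}(f(x_0))$ appearing in the CLT, which amounts to moving the factor $\sigma\mathcal{J}_{h,\sigma}^{-1}\sqrt{(\mathcal{T}_{h,\sigma}^{\smash{+,2}})^2\mathcal{I}_{h,\sigma}^{\smash{-}}+(\mathcal{T}_{h,\sigma}^{\smash{-,2}})^2\mathcal{I}_{h,\sigma}^{\smash{+}}}$ between the two sides of an inequality.
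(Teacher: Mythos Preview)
Your proposal is correct and follows exactly the route the paper intends: the paper's proof consists of the single sentence ``The two claims follow from Corollary~\ref{cor:CLT},'' and your argument spells out precisely the standard rearrangement and Portmanteau reasoning that this sentence encodes.
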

				\begin{proof}
					The two claims follow from Corollary \ref{cor:CLT}.
				\end{proof}

		We prove (statistical) lower bounds in the case of Assumption \ref{assump:CLT} \hyperref[assump:CLT]{(noise covariance function)} and if $\Gamma=\Lambda$.

			\begin{proposition}\label{prop:LowerBound}
			Grant Assumptions \ref{assump:WellPosedness} \hyperref[assump:WellPosedness]{(well-posedness)}, \ref{assump:key} \hyperref[assump:key]{(noise-scaling)} with $\Gamma=\Lambda$,  Assumption \ref{assump:CLT} \hyperref[assump:CLT]{(noise covariance function)} and fix $\beta\ge 1$. Then
			\begin{equation*}
				\liminf_{\sigma\to 0}\inf_{T_\sigma(x_0)}\sup_{f\in \Sigma(\beta,L)} \prob*[f]{\abs{T_\sigma(x_0)-f(x_0)}\ge (\sigma^2)^{\beta/(1+2\beta)}}\ge C,
			\end{equation*}
			where $\inf_{T_\sigma(x_0)}$ denotes the infimum over all estimators $T_\sigma(x_0)$ for $f(x_0)$ based on observing $(X_t(y)|t\in[0,T],y\in\Lambda)$ solving the semi-linear SPDE \eqref{eq:SPDE1} with diffusivity $\nu>0$ and noise level $\sigma=\sigma(\nu)$. The constant $C>0$ depends on $L$, $\smash{\underbar{\Sigma}}$, $T$, $\abs{\Lambda}$ and $p_{\max}$ from Lemma \ref{lem:BoundsforExpectations}.
			\end{proposition}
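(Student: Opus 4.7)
The plan is to execute a classical two--hypothesis Le Cam reduction, tailored to the target pointwise rate $r_\sigma := (\sigma^2)^{\beta/(1+2\beta)}$. Set the bandwidth $h := c_0 \sigma^{2/(1+2\beta)}$ for a small constant $c_0 > 0$ to be chosen later, and fix a smooth bump $\phi\colon \mathbb{R} \to [0,1]$ with $\operatorname{supp}\phi \subset [-1,1]$ and $\phi(0)=1$. I would take the two candidate reaction functions $f_0 \equiv 0$ and $f_1(x) := L_0 h^\beta \phi((x-x_0)/h)$ with $L_0 > 0$ chosen so small that both lie in $\Sigma(\beta,L)$; a direct computation via the rescaling formulas of Remark \ref{rmk:Scaling} verifies this (note that $\|f_1'\|_\infty \lesssim L_0 h^{\beta-1}$ and the $(\beta-1)$--Hölder seminorm of $f_1'$ is $L_0 |\phi'|_{C^{\beta-1}}$). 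By construction $|f_1(x_0) - f_0(x_0)| = L_0 h^\beta \gtrsim r_\sigma$.

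The central step is a Kullback--Leibler bound between the laws $\prob*[f_0]{}$ and $\prob*[f_1]{}$ of the trajectory $X$ on $[0,T]\times \Lambda$. By the Girsanov theorem for SPDEs (Theorem 10.18 in \citep{DaPrato2014}) combined with Assumption \ref{assump:CLT} \hyperref[assump:CLT]{(Noise covariance function)}, which yields $[B^{{-1}}\xi](y) = \Sigma(y)^{-1}\xi(y)$ with $\Sigma\ge \underbar{\Sigma}$, one obtains
\begin{equation*}
\operatorname{KL}(\prob*[f_1]{}\,\|\,\prob*[f_0]{}) \;=\; \frac{1}{2\sigma^2}\,\EV[f_1]{\int_0^T \int_\Lambda \Sigma(y)^{-2}(f_1-f_0)^2(X_t(y))\,\D y\D t}.
\end{equation*}
Since $(f_1-f_0)^2 \le L_0^2 h^{2\beta}\,\indicator_{[x_0-h,x_0+h]}$, this is bounded by $\tfrac{L_0^2 h^{2\beta}}{2\sigma^2 \underbar{\Sigma}^2}\int_0^T\int_\Lambda \prob*[f_1]{|X_t(y)-x_0|\le h}\,\D y\D t$. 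The upper density bound of Corollary \ref{cor:DensityBounds}, applied to $f_1$ (which is Lipschitz with constant $\le L$), then gives $\prob*[f_1]{|X_t(y)-x_0|\le h}\lesssim h\cdot p_{\max}(t)$ with integrable time singularity, since $\alpha<1$ in Assumption \ref{assump:key} \hyperref[assump:key]{(Noise--scaling)}; the constant $p_{\max}$ from Lemma \ref{lem:BoundsforExpectations} quantifies this uniformly in the initial condition. Consequently
\begin{equation*}
\operatorname{KL}(\prob*[f_1]{}\,\|\,\prob*[f_0]{}) \;\lesssim\; \frac{h^{2\beta+1}}{\sigma^2} \;=\; c_0^{2\beta+1}\cdot\text{const},
\end{equation*}
which is made arbitrarily small by choosing $c_0$ small enough.

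Finally, Le Cam's two--point inequality (e.g.\ Theorem 2.2 in \citep{Tsybakov2009}) with separation $2s = L_0 h^\beta \gtrsim r_\sigma$ yields the claimed lower bound, with explicit constant $C$ inherited from $L$, $\underbar{\Sigma}$, $T$, $|\Lambda|$ and $p_{\max}$. The main obstacle I expect is of a bookkeeping nature: verifying that the density upper bound of Proposition \ref{prop:DensityBounds} applies uniformly to both candidate laws (both drifts lie in the same Lipschitz class, so this is immediate) and checking the time integrability of $p_{\max}(t)$ near $t=0$. Notably, the delicate spatial--ergodicity machinery driving the upper bound is not needed here: only Girsanov plus the marginal density upper bound are required, which is why the lower bound is relatively direct once the earlier density results are in hand.
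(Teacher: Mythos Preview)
Your proposal is correct and follows essentially the same approach as the paper: a two--hypothesis reduction with $f_0\equiv 0$ and a localised bump alternative, a KL computation via Girsanov under Assumption \ref{assump:CLT}, the density upper bound from Corollary \ref{cor:DensityBounds}/Lemma \ref{lem:BoundsforExpectations} to get $\operatorname{KL}\lesssim h^{2\beta+1}/\sigma^2$, and Theorem 2.2 in \citep{Tsybakov2009} to conclude. The only cosmetic difference is that the paper fixes the bump amplitude to $L$ and absorbs the free constant into the choice $h=\sigma^{2/(2\beta+1)}L^{-1/\beta}$, whereas you keep a separate $L_0$ and tune $c_0$; either bookkeeping works.
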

			\begin{proof}
				See Subsection \ref{subsec:Proofs_Main}.
			\end{proof}
			
			\subsection{A numerical example}\label{subsec:Balanced_Numerics}
			
			We simulate the semi-linear stochastic heat equation with space-time white noise (Example \ref{examp:main} (\ref{num_Examp_a})), i.e.\
			\begin{equation*}
				\D X_t = \nu \Updelta X_t\,\D t  + F(X_t)\,\D t + \nu^{1/4}\,\D W_t,\quad 0\le t\le T,\quad X_0\equiv 0,
			\end{equation*}
			on $\Lambda=(0,1)$ with Dirichlet boundary conditions and final time $T=1$. We choose the well-known Allen-Cahn phase field model with stable points $\pm 3$. Since $f$ is required to have globally bounded Lipschitz norm, we set 
			\begin{equation}
			f(x) = \begin{cases}
			-x+10^3-10^2,& x\le -10,\\
			-(x^3-9x),& -10<x<10,\\
			-x-10^3+10^2,&x\ge 10.
			\end{cases}\label{eq:Simulation_f}
				\end{equation}
				The simulation is performed using a semi-implicit Euler scheme with a finite-difference approximation of $\Updelta$ according to Algorithm 10.8 of \citet{Lord2014}. The SPDE is discretised to a space-time mesh with $500$ spatial and $500^2$ temporal points satisfying the Courant–Friedrichs–Lewy (CFL) condition \citep{Lord2014}. Typical realisations of $X$ for high and low diffusivity $\nu\in\Set{0.1,0.001}$ are displayed in Figure \ref{fig:Examp}. At high diffusivity $\nu=0.1$ only one stable point at $+3$ is visible, whereas at low diffusivity both stable points $\pm 3$ appear. The estimator $\hat{f}(x_0)_{h,\sigma}$ uses local information around $x_0$. Figure \ref{fig:Examp} (right) shows those space-time points, which are within the range of $\pm 0.5$ of $x_0=1$.
			
			We implement the estimator $\hat{f}(x_0)_{h,\sigma}$ according to Remark \ref{rmk:PracticalImplementation} with known diffusivity $\nu>0$. Theorem \ref{thm:Nonparametric} ensures that $\hat{f}(x_0)_{h,\sigma}$ is a consistent estimator  of $f(x_0)$ at every $x_0\in\mathbb{R}$. To better understand the performance of $\hat{f}(x_0)_{h,\sigma}$ depending on $x_0$, we perform a Monte-Carlo simulation with $10^4$ runs. The results are displayed in Figure \ref{fig:Performance}. The figure on the left shows that, as expected, the estimator concentrates around the true function $f$. Interestingly, the figure on the right reveals that the spread of $\hat{f}(x_0)_{h,\sigma}$ is the smallest if $x_0$ is a stationary point of the SPDE ($\pm 3,0$). The interpretation for the comparatively small variance at $\pm3$ is clear, since these are the stable points. The small spread at $x_0=0$ is due to the initial condition $X_0\equiv 0$. This behaviour can be rigorously understood by further exploring the dependency of the density $p_{\nu,t,y}(x)$ on $x$ and on the initial condition beyond Proposition \ref{prop:DensityBounds}.
			
			\begin{figure}
			\includegraphics[width =.49 \textwidth]{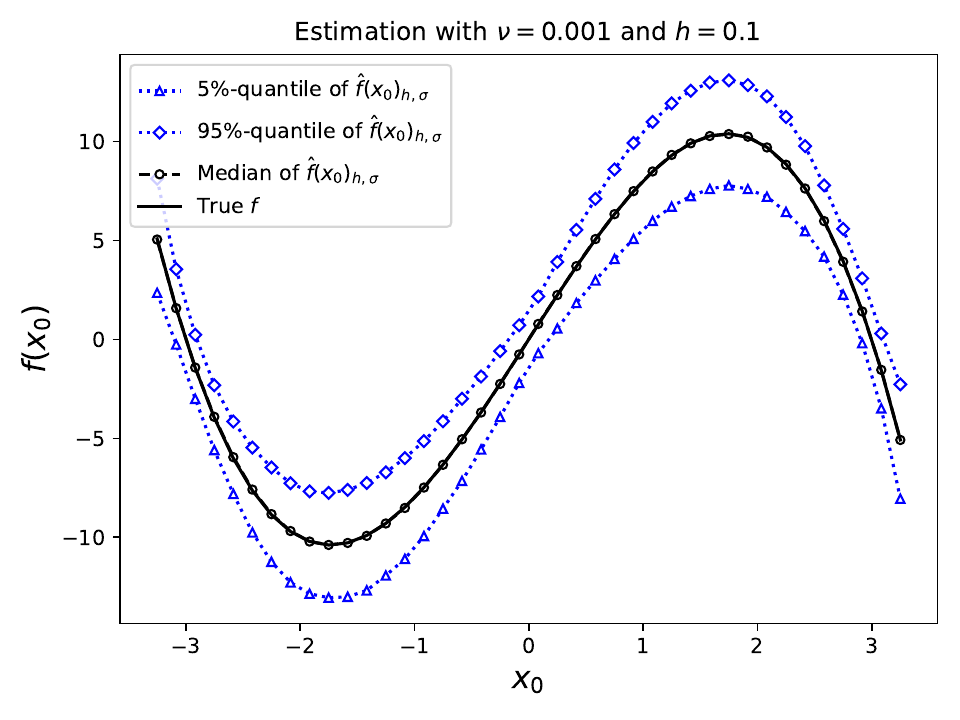}
			\includegraphics[width =.49 \textwidth]{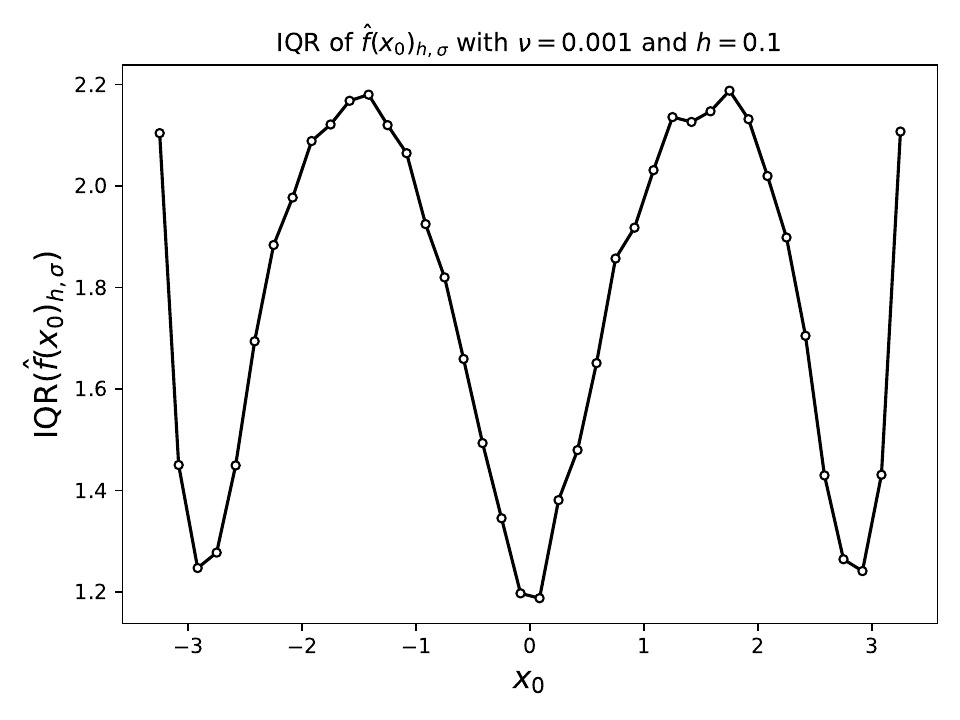}
			\caption{Monte Carlo simulation with $10^4$ runs of the performance of the estimator $\hat{f}(x_0)_{h,\sigma}$ at $\nu=0.001$ and $h=0.1$ with reaction function $f$ from \eqref{eq:Simulation_f}. Left: Median, $5\%$- and $95\%$-quantiles of $\hat{f}(x_0)_{h,\sigma}$. Right: Interquartile range (IQR, difference between $75\%$- and $25\%$-quantiles) of $\hat{f}(x_0)_{h,\sigma}$.}
			\label{fig:Performance}
			\end{figure}
		
		\section{Controlling the marginal densities and spatial ergodicity}\label{sec:Density}
		
		The reason for the convergence of the estimator $\hat{f}(x_0)_{h,\sigma}$ from \eqref{eq:Estimator} to $f(x_0)$ is the concentration of functionals $\mathcal{G}_{g,t,\sigma}\coloneqq \int_\Lambda g(X_t(y))\,\D y$ around their expectations due to the (asymptotic) spatial ergodicity of the non-Markovian process $y\mapsto X_t(y)$ for fixed time $0\le t\le T$. In this section, we introduce a novel methodology to prove this concentration, which builds upon exploiting the conditioning in the Clark-Ocone formula. The first step is to bound the (marginal) densities of $X_t(y)$ and to note that the upper bound does not depend on the (deterministic) initial condition.
	
	\subsection{The marginal densities}\label{subsec:Density}
		
		This subsection contains the upper and lower bounds on the marginal density $p_{\nu,\smash{\underbar{t}},t,y}$ (Corollary \ref{cor:DensityBounds}). We apply the methodology developed by \citet{nourdinDensityFormulaConcentration2009} to the setting of Assumptions \ref{assump:WellPosedness} \hyperref[assump:WellPosedness]{(well-posedness)} and \ref{assump:key} \hyperref[assump:key]{(noise-scaling)} and extend the results of \citet{nualartGaussianDensityEstimates2009a, nualartOptimalGaussianDensity2011} beyond the Laplacian case, to more general domains and spatial covariance structures of the noise process $(W_t)_{t\in[0,T]}$. Recall that for a starting time $0\le \smash{\underbar{t}} <T$ and a deterministic initial condition $\xi\in C(\Lambda)$ we denote by $\prob[\smash{(\smash{\underbar{t}},\xi)}]{}$ the law of $(X_t)_{t\in [0,T-\smash{\underbar{t}}]}$ started at $(\smash{\underbar{t}},\xi)$.

		\begin{proposition}\label{prop:DensityBounds}
			Grant Assumptions \ref{assump:WellPosedness} \hyperref[assump:WellPosedness]{(well-posedness)} and \ref{assump:key} \hyperref[assump:key]{(noise-scaling)}. Then there exist a time $0<t_0\le T$ defined in \eqref{eq:lower_t_0} and constants $0<c_1\le C_1<\infty$, $0<c_2\le C_2<\infty$, depending only on $\underbar{C}$, $\bar{C}$, $C_0$, $\norm{f'}_\infty$, $\alpha$ and $T$, such that for all diffusivity levels $0<\nu\le\bar{\nu}$, starting times $0\le \smash{\underbar{t}}<T$, deterministic initial conditions $\xi\in C(\Lambda)$, locations $y\in\Gamma$ and time points $0<t\le T-\smash{\underbar{t}}$ the Lebesgue-density $p_{\nu,\smash{\underbar{t}},t,y}$ of $X_{t}(y)$ under $\prob[\smash{(\smash{\underbar{t}},\xi)}]{}$ exists and satisfies the bound
			\begin{equation*}
				p_{\nu,\smash{\underbar{t}},t,y}\left(x-\EV[\smash{(\smash{\underbar{t}},\xi)}]{X_{t}(y)}\right)\le C_1 t^{-\alpha/2}\exp\left(-\frac{x^2}{2C_2t^{\alpha}}\right),\quad x\in\mathbb{R}.
			\end{equation*}
			If, moreover, $0<t\le t_0$, then
			\begin{equation*}
				p_{\nu,\smash{\underbar{t}},t,y}\left(x-\EV[\smash{(\smash{\underbar{t}},\xi)}]{X_{t}(y)}\right)\ge c_1 t^{-\alpha/2}\exp\left(-\frac{x^2}{2c_2t^{\alpha}}\right),\quad x\in\mathbb{R}.
			\end{equation*}
		\end{proposition}
		\begin{proof}
		See Subsection \ref{subsec:Proofs_Density_Appendix}.
	\end{proof}

		\begin{example}[Continued]
			Proposition \ref{prop:DensityBounds} implies the following Gaussian bounds for $x\in\mathbb{R}$, $0\le \smash{\underbar{t}}<T$, $\xi\in C(\Lambda)$, $y\in\Gamma$ and $0<t\le \min(t_0,T-\smash{\underbar{t}})$ with $t_0$ from \eqref{eq:lower_t_0} for Examples \ref{examp:main} (\ref{num_Examp_a})-(\ref{num_Examp_c}).
			\begin{enumerate}[(a)]
				\item In the case of the semi-linear stochastic heat equation driven by space-time white noise, Proposition \ref{prop:DensityBounds} implies the following  specification of Theorem 3.1 of \citet{nualartGaussianDensityEstimates2009a}:
				\begin{equation}
				t^{-1/4}\exp\left(-\frac{x^2}{2c_1t^{1/2}}\right) \lesssim p_{\nu,\smash{\underbar{t}},t,y}\left(x-\EV[\smash{(\smash{\underbar{t}},\xi)}]{X_{t}(y)}\right)\lesssim t^{-1/4}\exp\left(-\frac{x^2}{2C_1t^{1/2}}\right).\label{eq:Density_bounds_a}
			\end{equation}
				\item In the case of the Riesz kernel (coloured noise I), we obtain
				\begin{equation}
					t^{-1/2+\rho/4}\exp\left(-\frac{x^2}{2c_1t^{1-\rho/2}}\right) \lesssim p_{\nu,\smash{\underbar{t}},t,y}\left(x-\EV[\smash{(\smash{\underbar{t}},\xi)}]{X_{t}(y)}\right)\lesssim t^{-1/2+\rho/4}\exp\left(-\frac{x^2}{2C_1t^{1-\rho/2}}\right).\label{eq:Density_bounds_b}
				\end{equation}		
				\item In the case of spectral dispersion $B$ (coloured noise II), we obtain
				\begin{align*}
					p_{\nu,\smash{\underbar{t}},t,y}\left(x-\EV[\smash{(\smash{\underbar{t}},\xi)}]{X_{t}(y)}\right)&\gtrsim t^{-1/2-(2\rho_2-1)/(2\rho_1)}\exp\left(-\frac{x^2}{2c_1t^{1+(2\rho_2-1)/\rho_1}}\right),\\
					p_{\nu,\smash{\underbar{t}},t,y}\left(x-\EV[\smash{(\smash{\underbar{t}},\xi)}]{X_{t}(y)}\right)&\lesssim t^{-1/2-(2\rho_2-1)/(2\rho_1)}\exp\left(-\frac{x^2}{2C_1t^{1+(2\rho_2-1)/\rho_1}}\right).
				\end{align*}
				\item The density bounds \eqref{eq:Density_bounds_a} and \eqref{eq:Density_bounds_b} hold uniformly in $y\in\Lambda$ for Neumann boundary conditions.
			\end{enumerate}
		\end{example}
		\begin{corollary}\label{cor:DensityBounds}
			Grant Assumptions \ref{assump:WellPosedness} \hyperref[assump:WellPosedness]{(well-posedness)} and \ref{assump:key} \hyperref[assump:key]{(noise-scaling)}.
			\begin{enumerate}[(a)]
				\item\label{num:cordensitybounds_a} There exists a constant $0<p_{\max}<\infty$, depending only on $\underbar{C}$, $\bar{C}$, $C_0$, $\norm{f'}_\infty$, $\alpha$ and $T$, such that for all diffusivity levels $0<\nu\le\bar{\nu}$, starting times $0\le \smash{\underbar{t}} <T$, deterministic initial conditions $\xi\in C(\Lambda)$, locations $y\in\Gamma$ and time points $0<t \le T-\smash{\underbar{t}}$ the Lebesgue-density $p_{\nu,\smash{\underbar{t}},t,y}$ of $X_{t}(y)$ under $\prob[\smash{(\underbar{t},\xi)}]{}$ exists and satisfies
			\begin{equation*}
				p_{\nu,\smash{\underbar{t}},t,y}(x)\le p_{\max}t^{-\alpha/2}<\infty,\quad x\in\mathbb{R}.
			\end{equation*}
			\item\label{num:cordensitybounds_b} Consider the starting configuration $\smash{\underbar{t}}=0$ and $\xi\in C(\Lambda)$ deterministic. For every fixed bounded subset $\mathcal{N}\subset\mathbb{R}$ and $0<\Delta<t_0$ with $0<t_0\le T$ from \eqref{eq:lower_t_0} there exists a constant $0<p_{\min,\norm{\xi}_\infty,\mathcal{N},\Delta}<\infty$, depending on $\norm{\xi}_\infty$, $\mathcal{N}$, $\Delta$, $\underbar{C}$, $\bar{C}$, $C_0$, $\norm{f'}_\infty$, $\alpha$ and $T$, such that
			\begin{equation*}
				p_{\nu,t,y}(x) \ge p_{\min,\norm{\xi}_\infty,\mathcal{N}, \Delta}>0,\quad x\in\mathcal{N},
			\end{equation*}
			for all $0<\nu \le \smash{\bar{\nu}}$, $y\in\Gamma$ and $\Delta \le t\le t_0$.
			\end{enumerate}
		\end{corollary}
		\begin{proof}
			The upper bound (\ref{num:cordensitybounds_a}) follows directly from Proposition \ref{prop:DensityBounds}. For the lower bound (\ref{num:cordensitybounds_b}), it suffices to make sure that $\EV[\smash{(\underbar{t},\xi)}]{X_{t}(y)}$ remains bounded as $\nu\to 0$. This is achieved by applying Lemma \ref{lem:UniformBoudnednessSolution}.
		\end{proof}
		
\subsection{Concentration due to spatial ergodicity}\label{subsec:Concentration}

	The purpose of this subsection is to show that Assumptions \ref{assump:WellPosedness} \hyperref[assump:WellPosedness]{(well-posedness)} and \ref{assump:key} \hyperref[assump:key]{(noise-scaling)} ensure the concentration of functionals
	\begin{equation*}
		\mathcal{G}_{g,t,\sigma}\coloneqq \int_{\Gamma} g(X_t(y))\,\D y,\quad 0\le t\le T,
	\end{equation*}
	for suitable functions $g\colon\mathbb{R}\to\mathbb{R}$ around their respective means as $\nu\to 0$ (and thus $\sigma\to 0$), even if $\norm{g'}_\infty\to\infty$. The first building block is a control of the variance.
	\begin{proposition}\label{prop:VarianceBound}
	Grant Assumptions \ref{assump:WellPosedness} \hyperref[assump:WellPosedness]{(well-posedness)}, \ref{assump:key} \hyperref[assump:key]{(noise-scaling)} and fix a (globally) Lipschitz-continuous function $g\colon \mathbb{R}\to \mathbb{R}$ with almost everywhere existing derivative $g'\in L^\infty(\mathbb{R})$. Then
	\begin{equation*}
	\operatorname{Var}\left(\mathcal{G}_{g,t,\sigma}\right) = \sigma^2\mathcal{O}\left(\norm{g'}_{L^1(\mathbb{R})}^2\wedge\norm{g'}_{L^2(\mathbb{R})}^2\wedge\norm{g'}_\infty^2\right),\quad 0\le t\le T,\quad 0<\nu\le\bar{\nu}.
	\end{equation*}
	More precisely, with $0<p_{\max}<\infty$ from Corollary \ref{cor:DensityBounds}, the variance $\operatorname{Var}\left(\mathcal{G}_{g,t,\sigma}\right)$ is bounded by
	\begin{equation*}
		\sigma^2\abs{\Gamma}\norm{B}^2C_0^2t^{1-\alpha} \begin{cases}
			\norm{g'}_{L^1(\mathbb{R})}^2 p_{\max}^2(1-\alpha)^{-1}2(1+ e^{2\norm{f'}_\infty t}\norm{f'}_\infty^2 t^2) ,& g'\in L^1(\mathbb{R})\cap L^\infty(\mathbb{R}),\\
			\norm{g'}_{L^2(\mathbb{R})}^2t^{\alpha/2}  p_{\max}e^{2C_0\norm{f'}_\infty t} ,& g'\in L^2(\mathbb{R})\cap L^\infty(\mathbb{R}),\\
			\norm{g'}_{\infty}^2 t^\alpha  2(1+ e^{2\norm{f'}_\infty t}\norm{f'}_\infty^2 t^2) ,& g'\in L^\infty(\mathbb{R})
		\end{cases}
	\end{equation*}
	for all $0\le t\le T$ and $0<\nu\le\bar{\nu}$.
	\end{proposition}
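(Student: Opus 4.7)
My plan is to follow the strategy outlined in the introduction: apply the Clark--Ocone formula and exploit the fact that the conditioning inside the Clark--Ocone integrand averages $g'(X_t(y))$ against a density uniformly bounded by Corollary \ref{cor:DensityBounds}. Since $g$ is locally Lipschitz and $X_t(y)$ has moments of all orders (Lemma \ref{lem:UniformBoudnednessSolution}), the Malliavin chain rule gives $\mathcal{G}_{g,t,\sigma}\in \mathbb{D}^{1,2}$, and the Clark--Ocone representation combined with It\^o's isometry yields
\begin{equation*}
\operatorname{Var}(\mathcal{G}_{g,t,\sigma}) = \int_0^t \EV{\norm{\EV{\mathcal{D}_\tau \mathcal{G}_{g,t,\sigma}\given \mathcal{F}_\tau}}_{\mathbb{H}}^2}\,\D\tau.
\end{equation*}
Inserting \eqref{eq:MalliavinDerivative}, I would decompose the Malliavin derivative as
\begin{equation*}
\mathcal{D}_\tau \mathcal{G}_{g,t,\sigma} = \sigma\int_\Gamma g'(X_t(y))\,G_{\nu,t,\tau}(y,\MTemptyplaceholder)\,\D y + R_\tau,
\end{equation*}
where the correction $R_\tau$ collects the iterated contribution involving $f'$ and $\mathcal{D}_\tau X_s$ for $\tau<s<t$.

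For the leading term, $G_{\nu,t,\tau}$ is deterministic and can be pulled out of $\EV{\MTemptyplaceholder\given \mathcal{F}_\tau}$. The Markov property identifies the conditional law of $X_t(y)$ given $\mathcal{F}_\tau$ with the law of the SPDE started at time $\tau$ from the random configuration $X_\tau$. Crucially, the upper bound $p_{\max}(t-\tau)^{-\alpha/2}$ from Corollary \ref{cor:DensityBounds} is independent of the initial configuration, so we obtain
\begin{align*}
\abs{\EV{g'(X_t(y))\given \mathcal{F}_\tau}} &\le p_{\max}(t-\tau)^{-\alpha/2}\norm{g'}_{L^1(\mathbb{R})},\\
 \EV{g'(X_t(y))^2\given \mathcal{F}_\tau} &\le p_{\max}(t-\tau)^{-\alpha/2}\norm{g'}_{L^2(\mathbb{R})}^2,
\end{align*}
together with the trivial bound $\norm{g'}_\infty$ in the $L^\infty$--case. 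Recognising the spatial integral as the adjoint semigroup $\mathcal{S}_{\nu,t,\tau}^\ast$ applied to $\indicator_\Gamma\EV{g'(X_t(\MTemptyplaceholder))\given \mathcal{F}_\tau}$, I would invoke $\norm{\mathcal{S}_{\nu,t,\tau}^\ast}\le C_0$ and $\norm{\MTemptyplaceholder}_{\mathbb{H}}\le \norm{B}\norm{\MTemptyplaceholder}$ from Assumption \ref{assump:WellPosedness} \hyperref[assump:WellPosedness]{(Well--posedness)}, followed by Cauchy--Schwarz in $y\in\Gamma$, to extract the prefactor $\abs{\Gamma}\norm{B}^2 C_0^2$. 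Integrating the weights $(t-\tau)^{-\alpha}$, $(t-\tau)^{-\alpha/2}$, or the constant $1$ over $\tau\in[0,t]$ produces the three time--exponents $t^{1-\alpha}/(1-\alpha)$, $t^{1-\alpha/2}$, and $t$ appearing in the bound.

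The main obstacle is the correction term $R_\tau$, which I would control via a Picard/Gronwall iteration of the linear equation \eqref{eq:MalliavinDerivative} for $\mathcal{D}_\tau X_s$. Using $\norm{f'}_\infty<\infty$, the iteration bounds $\mathcal{D}_\tau X_s$ in terms of $\sigma G_{\nu,s,\tau}$ up to a factor growing at most like $e^{\norm{f'}_\infty s}$. The subtle point is the order of conditioning: to preserve the $\norm{g'}_{L^1(\mathbb{R})}$ (resp.\ $\norm{g'}_{L^2(\mathbb{R})}$, $\norm{g'}_\infty$) factor and thereby the sharp $\sigma^2$--scaling, one must condition at an intermediate time $s\in(\tau,t)$ before invoking the density bound on $g'(X_t(y))$ via the tower property, and only afterwards apply Gronwall. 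In the $L^1$-- and $L^\infty$--cases this strategy produces the factor $2(1+e^{2\norm{f'}_\infty t}\norm{f'}_\infty^2 t^2)$; for $g'\in L^2(\mathbb{R})$, a direct Gronwall argument on an $\mathbb{H}$--norm control of $\mathcal{D}_\tau \mathcal{G}_{g,t,\sigma}$ is cleaner and yields the exponential $e^{2C_0\norm{f'}_\infty t}$. This averaging--before--Gronwall scheme is precisely what distinguishes the present proof from a Poincar\'e--type argument, which would only yield the weaker variance bounds $\sigma^2\norm{g'}_\infty^2$ (or $\sigma^2\norm{g'}_{L^2(\mathbb{R})}^2$) and would not deliver the $L^1$--norm bound essential when $g=K_h$ approaches a $\delta$--sequence.
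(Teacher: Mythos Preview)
Your proposal is correct and matches the paper's proof essentially step for step: Clark--Ocone plus It\^o isometry, the Markov property combined with the initial-condition-uniform density bound of Corollary~\ref{cor:DensityBounds} to control $\EV{g'(X_t(y))\mid\mathscr{F}_\tau}$ (packaged in the paper as Lemma~\ref{lem:BoundConditionalExpectation}), and the tower-property-then-Gronwall treatment of the nonlinear remainder (packaged as Lemmata~\ref{lem:CondExpectation_MalliavinIntegration} and~\ref{lem:Lift}). For the $L^2$--case the paper likewise drops Clark--Ocone in favour of the Poincar\'e inequality plus a direct Gronwall bound on $\norm{\mathcal{D}_\tau\mathcal{G}_{g,t,\sigma}}_{\mathbb{H}}$, exactly as you anticipate.
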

	\begin{proof}
		See Subsection \ref{subsec:ProofsDensity}.
	\end{proof}
	
		\begin{remark}[On the connection to \citet{chenSpatialErgodicitySPDEs2021} and \citet{gaudlitzEstimationReactionTerm2023}]\label{rmk:Connection}
		 The novelty of Proposition \ref{prop:VarianceBound} is the $\mathcal{O}(\sigma^2 \norm{g'}_{L^1(\mathbb{R})}^2)$-bound for the variance. The bound of the order of $\mathcal{O}(\sigma^2 \norm{g'}_{L^2(\mathbb{R})}^2)$ follows by combining the results from Section 2.4.1 of \citet{gaudlitzEstimationReactionTerm2023} with the density estimates from Corollary \ref{cor:DensityBounds}. Bounding the variance of the order of $\mathcal{O}(\sigma^2 \norm{g'}_\infty^2)$ can be achieved similarly to Theorems 1.6 and 1.7 of \citet{chenSpatialErgodicitySPDEs2021}. As we consider localised functions $g_h$ with $\norm{g_h'}_{L^1(\mathbb{R})}\sim 1$, $\norm{g_h'}_{L^2(\mathbb{R})}^2\sim h^{-1}$ and $\norm{g_h'}_\infty^2\sim h^{-2}$, Proposition \ref{prop:VarianceBound} yields a bound of the order of $\mathcal{O}(\sigma^2)$ and is significantly sharper than previous results. We conjecture that control on the derivatives of $p_{\nu,t,y}$ would allow for a bound of the order of $\mathcal{O}(h^2\sigma^2)$. 
	\end{remark}
	A key ingredient for the proof of the $\mathcal{O}(\sigma^2 \norm{g'}_{L^1(\mathbb{R})}^2)$-bound for the variance in Proposition \ref{prop:VarianceBound} is the well-known Clark-Ocone formula (Proposition 6.3 of \citet{chenSpatialErgodicitySPDEs2021}), which implies
	\begin{equation}
			\mathcal{G}_{g,t,\sigma} = \EV*{\mathcal{G}_{g,t,\sigma}} + \int_0^t \int_\Lambda \EV*{\mathcal{D}_{\tau,z}\mathcal{G}_{g,t,\sigma}\given \mathcal{F}_{\tau}}\,\mathcal{W}(\D z,\D \tau),\quad 0\le t\le T,\label{eq:Clark_ocone}
		\end{equation}
		almost surely. In contrast to the approaches of \citet{chenSpatialErgodicitySPDEs2021} and \citet{gaudlitzEstimationReactionTerm2023}, which are based on the Poincaré inequality (compare Equation (1.15) of \citet{chenSpatialErgodicitySPDEs2021}), we make use of the averaging effect of the conditional expectation in \eqref{eq:Clark_ocone}. 
		The following lemma shows that a bound for the conditional expectation $\abs{\EV{\phi(X_t(y))\given \mathcal{F}_\tau}}$, uniformly in $y\in\Gamma$, suffices to control the conditional expectation in the Clark-Ocone formula \eqref{eq:Clark_ocone}.
	\begin{lemma}\label{lem:CondExpectation_MalliavinIntegration}
		Grant Assumptions \ref{assump:WellPosedness} \hyperref[assump:WellPosedness]{(well-posedness)} and \ref{assump:key} \hyperref[assump:key]{(noise-scaling)}. Fix $0<\nu\le\bar{\nu}$, $0<t\le T$ and take any $\phi\in L^\infty(\mathbb{R})$. Assume that there exists a family of random variables $(\kappa(t,\tau))_{\tau\in [0,t)}\subset \mathbb{R}_{\ge0}$, jointly measurable as a function $(\omega,\tau)\mapsto \kappa(t,\tau)(\omega)$ and potentially depending on $\nu$, $t$ and $\phi$, such that
		\begin{equation*}
			\sup_{y\in\Gamma}\abs{\EV{\phi(X_t(y))\given \mathcal{F}_\tau}}\le \kappa(t,\tau),\quad  0\le \tau<t,
		\end{equation*}
		almost surely. Then
		\begin{align*}
		\EV[][][\bigg]{\int_0^t \norm[\bigg]{\EV[][][\bigg]{\int_{\Gamma} \phi(X_t(y))\mathcal{D}_{\tau}X_t(y)\,\D y\given \mathcal{F}_\tau}}_{\mathbb{H}}^2\,\D\tau }\hspace{-5em}&\\
		&\le  2 \sigma^2 C_0^2 \norm{B}^2\abs{\Gamma}\int_0^t \EV{\kappa(t,\tau)^2}\,\D\tau \left(1+e^{2\norm{f'}_\infty t}\norm{f'}_\infty^2 t^2\right).
		\end{align*}
	\end{lemma}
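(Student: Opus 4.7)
The strategy is to apply the Malliavin integral equation \eqref{eq:MalliavinDerivative} and split
\begin{equation*}
\mathcal{D}_\tau X_t(y)(z) = \sigma G_{\nu,t,\tau}(y,z) + R_\tau(t,y)(z),\quad R_\tau(t,y)(z) \coloneqq \int_\tau^t\!\int_\Lambda G_{\nu,t,s}(y,\eta) f'(X_s(\eta))\mathcal{D}_\tau X_s(\eta)(z)\,\D\eta\,\D s,
\end{equation*}
into a deterministic non--negative Green--function leading term and a random correction. The determinism of the leading term is crucial: it lets us pull $G_{\nu,t,\tau}(y,z)$ out of the inner conditional expectation, which is what makes the $\kappa$--hypothesis applicable.

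Iterating this equation as a Picard series and combining $\abs{f'}\le \norm{f'}_\infty$ with the semigroup identity $\int_\Lambda G_{\nu,t,s}(y,\eta) G_{\nu,s,\tau}(\eta,z)\,\D\eta = G_{\nu,t,\tau}(y,z)$ (which follows from uniqueness of solutions for the linear equation in Assumption \ref{assump:WellPosedness} \hyperref[assump:WellPosedness]{(Well--posedness)}) causes each iterate to collapse into $\sigma G_{\nu,t,\tau}(y,z)\norm{f'}_\infty^k (t-\tau)^k/k!$. Summing gives the pointwise Gr\"onwall bound $\abs{\mathcal{D}_\tau X_t(y)(z)}\le \sigma G_{\nu,t,\tau}(y,z) e^{\norm{f'}_\infty(t-\tau)}$, and hence $\abs{R_\tau(t,y)(z)}\le \sigma G_{\nu,t,\tau}(y,z)\norm{f'}_\infty(t-\tau)e^{\norm{f'}_\infty(t-\tau)}$ via $e^x-1\le xe^x$. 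Adding these two estimates, pulling out the deterministic $\sigma G_{\nu,t,\tau}(y,z)$ from the inner expectation, and invoking the $\kappa$--bound on the remaining conditional expectation of $\abs{\phi(X_t(y))}$ gives
\begin{equation*}
\EV*{\int_\Gamma \abs{\phi(X_t(y))\mathcal{D}_\tau X_t(y)(z)}\,\D y\given \mathscr{F}_\tau}\le \sigma\kappa(t,\tau)\bigl(1+\norm{f'}_\infty t\,e^{\norm{f'}_\infty t}\bigr)\int_\Gamma G_{\nu,t,\tau}(y,z)\,\D y.
\end{equation*}

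The final step is to square, take the $\mathbb{H}$--norm using $\norm{\xi}_\mathbb{H}\le \norm{B}\norm{\xi}_\mathcal{H}$, and exploit the duality
\begin{equation*}
\int_\Lambda\biggl(\int_\Gamma G_{\nu,t,\tau}(y,z)\,\D y\biggr)^{\!2}\D z = \norm{\mathcal{S}_{\nu,t,\tau}^\ast \indicator_\Gamma}_\mathcal{H}^2 \le C_0^2\abs{\Gamma},
\end{equation*}
which is immediate from $\norm{\mathcal{S}_{\nu,t,\tau}}\le C_0$. Applying $(1+x)^2\le 2(1+x^2)$, integrating in $\tau$, and taking the outer expectation then yields the claimed factor $2\sigma^2 C_0^2\norm{B}^2\abs{\Gamma}\bigl(1+e^{2\norm{f'}_\infty t}\norm{f'}_\infty^2 t^2\bigr)\int_0^t \EV{\kappa(t,\tau)^2}\,\D\tau$.

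I expect the main obstacle to be the Picard iteration: one must rigorously verify the Green--function semigroup identity starting from \eqref{eq:Commuting} and well--posedness, and justify Fubini across the iterates so that the telescoping series with general term $\sigma G_{\nu,t,\tau}(y,z)(\norm{f'}_\infty(t-\tau))^k/k!$ genuinely emerges and converges pointwise. Once the pointwise Gr\"onwall bound on $\abs{\mathcal{D}_\tau X_t(y)(z)}$ is secured, the remainder is direct bookkeeping with Green--function norms that are controlled by Assumption \ref{assump:WellPosedness} \hyperref[assump:WellPosedness]{(Well--posedness)}.
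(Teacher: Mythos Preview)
Your approach is correct and yields exactly the stated constant, but it differs from the paper's route. The paper goes through an auxiliary estimate (Lemma~\ref{lem:Lift}): instead of bounding $\abs{\mathcal{D}_\tau X_t(y)(z)}$ pointwise, it inserts an intermediate conditioning $\EV{\,\cdot\,\given\mathscr{F}_s}$ in the non-linear part of \eqref{eq:MalliavinDerivative} via the tower property, extracts a factor $\kappa(t,s)$ for each $s\in[\tau,t]$, and then runs Gr\"onwall on the integrated quantity $L(s)=\norm{\int_\Gamma\int_\Lambda G_{\nu,t,s}(y,\eta)\abs{\mathcal{D}_\tau X_s(\eta)}\,\D\eta\,\D y}_{\mathbb{H}}$ at the $\mathbb{H}$-level. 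Squaring and integrating the resulting bound $\kappa(t,\tau)+e^{\norm{f'}_\infty t}\norm{f'}_\infty\int_\tau^t\EV{\kappa(t,s)\given\mathscr{F}_\tau}\,\D s$ over $\tau$ then produces the $t^2$ factor via Cauchy--Schwarz and Fubini. Your route bypasses the tower-property step entirely: the pointwise domination $\abs{\mathcal{D}_\tau X_t(y)(z)}\le \sigma G_{\nu,t,\tau}(y,z)e^{\norm{f'}_\infty(t-\tau)}$ makes the bounding kernel deterministic, so $\kappa(t,\tau)$ alone suffices and the remaining bookkeeping is shorter. Both arguments rest on the same evolution-family identity $\mathcal{S}_{\nu,t,s}\mathcal{S}_{\nu,s,\tau}=\mathcal{S}_{\nu,t,\tau}$ (the paper uses it at the operator level in its Gr\"onwall step, you at the kernel level in yours), so the ``main obstacle'' you flag is no worse than what the paper itself assumes.
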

	\begin{remark}
		If the map $(\omega,\tau)\mapsto \sup_{y\in\Gamma}\abs{\EV{\phi(X_t(y))\given \mathcal{F}_\tau}}$ is jointly measurable, for example if $\phi$ is continuous, then $\kappa$ could be chosen as $\kappa(t,\tau)\coloneq \sup_{y\in\Gamma}\abs{\EV{\phi(X_t(y))\given \mathcal{F}_\tau}}$ in Lemma \ref{lem:CondExpectation_MalliavinIntegration}.
	\end{remark}
	\begin{proof}
		See Subsection \ref{subsec:Proofs_Concentration}.
	\end{proof}
	The next Lemma shows that by combining the upper density bound from Corollary \ref{cor:DensityBounds} with the Markovianity of $X$ we obtain a tight upper bound $\kappa(t,\tau)$ for Lemma \ref{lem:CondExpectation_MalliavinIntegration}, which does not depend on the diffusivity $0<\nu\le\bar{\nu}$.
	\pagebreak
	\begin{lemma}\label{lem:BoundConditionalExpectation}
		Grant Assumptions \ref{assump:WellPosedness} \hyperref[assump:WellPosedness]{(well-posedness)}, \ref{assump:key} \hyperref[assump:key]{(noise-scaling)} and consider the constant \linebreak$0<p_{\max}<\infty$, depending only on $\underbar{C}$, $\bar{C}$, $C_0$, $\norm{f'}_\infty$, $\alpha$ and $T$, from Corollary \ref{cor:DensityBounds}. For all $0<\nu\le\bar{\nu}$, $0<t\le T$ and $\phi\in L^1(\mathbb{R})\cup L^\infty(\mathbb{R})$ we have
		\begin{equation*}
			\sup_{y\in\Gamma}\EV*{\abs*{\phi(X_t(y))}\given \mathcal{F}_\tau}\le \norm{\phi}_{L^1(\mathbb{R})} p_{\max}(t-\tau)^{-\alpha/2} \wedge \norm{\phi}_\infty,\quad 0\le\tau <t,
		\end{equation*}
		almost surely.
	\end{lemma}
		\begin{proof}
		See Subsection \ref{subsec:ProofsDensity}.
	\end{proof}
	The second building block for the concentration consists of controlling the expectation of the functional $\mathcal{G}_{h,t,\sigma}$ using the density bounds from Corollary \ref{cor:DensityBounds}.
		
		\begin{lemma}\label{lem:BoundsforExpectations}
				Grant Assumptions \ref{assump:WellPosedness} \hyperref[assump:WellPosedness]{(well-posedness)} and \ref{assump:key} \hyperref[assump:key]{(noise-scaling)}. Let $\Xi\subset\mathbb{R}$ be compact and take the constant $0<p_{\max}<\infty$ from Corollary \ref{cor:DensityBounds}. For $0<t_0\le T$ from \eqref{eq:lower_t_0} and any $0<\Delta<t_0$ take the constant $0<p_{\min,\norm{X_0}_\infty,\Xi,\Delta}<\infty$ from Corollary \ref{cor:DensityBounds}. For all $0<\nu\le\bar{\nu}$, $0<t\le T$ and $g\colon\mathbb{R}\to\mathbb{R}_{\ge 0}$ with $g\in L^1(\mathbb{R})$ we can bound
				\begin{equation*}
					\EV*{\mathcal{G}_{g,t,\sigma}} \le \abs{\Gamma} p_{\max}t^{-\alpha/2}\norm{g}_{L^1(\mathbb{R})}.
				\end{equation*}
				If, additionally, $g$ has compact support $\operatorname{supp}(g)\subset \Xi$, then
				\begin{equation*}
					\EV*{\mathcal{G}_{g,t,\sigma}} \ge  \abs{\Gamma} p_{\min,\norm{X_0}_\infty,\Xi, \Delta}\norm{g}_{L^1(\mathbb{R})},\quad 0<\Delta\le t\le t_0\le T,\quad 0<\nu\le\bar{\nu}.
				\end{equation*}
			\end{lemma}
				\begin{proof}
		See Subsection \ref{subsec:ProofsDensity}.
	\end{proof}
		We can state the concentration result for functionals of the type $\mathcal{G}_{g,t,h,\sigma}\coloneqq \int_\Gamma g_h(X_t(y))\,\D y$, $0\le t\le t_0$.
		\begin{proposition}\label{prop:SpatailErgodicity}
			Grant Assumptions \ref{assump:WellPosedness} \hyperref[assump:WellPosedness]{(well-posedness)} and \ref{assump:key} \hyperref[assump:key]{(noise-scaling)}. Take $0\le t\le t_0$ with $0<t_0\le T$ from \eqref{eq:lower_t_0} and a locally Lipschitz-continuous function $g\colon \mathbb{R}\to\mathbb{R}_{\ge 0}$ with compact support $\operatorname{supp}(g)$. Then
			\begin{equation*}
				\frac{\mathcal{G}_{g,t,h,\sigma}}{\EV*{\mathcal{G}_{g,t,h,\sigma}}}\xrightarrow{\prob{}} 1
			\end{equation*}
			 as $\nu\to 0$ for all bounded sequences of $h\le 1$, provided $\sigma=\smallo(h)$.
		\end{proposition}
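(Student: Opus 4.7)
My plan is to reduce the convergence in probability to a second--moment estimate via Chebyshev's inequality, where the numerator is controlled by the variance bound of Proposition~\ref{prop:VarianceBound} and the denominator by the lower bound on the expectation from Lemma~\ref{lem:BoundsforExpectations}. For $t=0$ the statement is trivial: the deterministic initial condition $X_0$ makes $\mathcal{G}_{g,0,h,\sigma}$ deterministic, so the ratio equals one whenever the denominator is nonzero. Accordingly, I would fix $t\in(0,t_0]$ and pick any auxiliary time $\lowert\in(0,t)$, for example $\lowert\coloneqq t/2$.

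For the variance, $g$ being locally Lipschitz--continuous with compact support gives $g'\in L^1(\mathbb{R})$, and the rescaling identities of Remark~\ref{rmk:Scaling} yield $\norm{g_h'}_{L^1(\mathbb{R})}=\norm{g'}_{L^1(\mathbb{R})}$ independently of $h$. The $L^1$--branch of Proposition~\ref{prop:VarianceBound} then gives
\begin{equation*}
  \var(\mathcal{G}_{g,t,h,\sigma}) \lesssim \sigma^2 \norm{g'}_{L^1(\mathbb{R})}^2 = \mathcal{O}(\sigma^2),
\end{equation*}
with a constant independent of $h\le 1$. For the expectation, I would pick $R>0$ with $\supp(g)\subset[-R,R]$ and observe that $\supp(g_h)\subset x_0+[-hR,hR]\subset x_0+[-R,R]\eqqcolon\mathcal{N}$ whenever $h\le 1$. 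The monotonicity statement in Lemma~\ref{lem:BoundsforExpectations} then ensures $p_{\min,\norm{X_0}_\infty,\supp(g_h),\lowert}\ge p_{\min,\norm{X_0}_\infty,\mathcal{N},\lowert}\eqqcolon c>0$ uniformly in $h\le 1$, so combined with $\norm{g_h}_{L^1(\mathbb{R})}=h\norm{g}_{L^1(\mathbb{R})}$ the lemma yields $\EV{\mathcal{G}_{g,t,h,\sigma}}\ge \abs{\Gamma}\,c\,h\,\norm{g}_{L^1(\mathbb{R})}\gtrsim h$.

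Combining both estimates, Chebyshev's inequality gives for each $\epsilon>0$
\begin{equation*}
  \prob*{\abs*{\mathcal{G}_{g,t,h,\sigma}/\EV{\mathcal{G}_{g,t,h,\sigma}}-1}>\epsilon} \le \frac{\var(\mathcal{G}_{g,t,h,\sigma})}{\epsilon^2\,\EV{\mathcal{G}_{g,t,h,\sigma}}^2} \lesssim \frac{\sigma^2}{\epsilon^2 h^2},
\end{equation*}
which vanishes as $\nu\to 0$ under the hypothesis $\sigma=\smallo(h)$. No genuine obstacle is anticipated here: all the hard work has already been done in establishing the sharp $L^1$--variance bound of Proposition~\ref{prop:VarianceBound} together with the uniform Gaussian density lower bound of Corollary~\ref{cor:DensityBounds}, and what remains is only their routine assembly via Chebyshev's inequality.
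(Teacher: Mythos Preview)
Your proposal is correct and follows essentially the same approach as the paper: bound the variance by Proposition~\ref{prop:VarianceBound} via $\norm{g_h'}_{L^1(\mathbb{R})}=\norm{g'}_{L^1(\mathbb{R})}$, bound the expectation from below by Lemma~\ref{lem:BoundsforExpectations} via $\norm{g_h}_{L^1(\mathbb{R})}=h\norm{g}_{L^1(\mathbb{R})}$, and conclude with Chebyshev. Your handling of the support inclusion (using a fixed set $\mathcal{N}=x_0+[-R,R]$ and the monotonicity of $p_{\min}$) is in fact slightly more careful than the paper's shorthand ``$\operatorname{supp}(g_h)\subset\operatorname{supp}(g)$ for $h\le 1$''.
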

		\begin{proof}
			Recall the scaling $\norm{g_h}_{L^1(\mathbb{R})}=h\norm{g}_{L^1(\mathbb{R})}$ and $\norm{g_h'}_{L^1(\mathbb{R})}=\norm{g'}_{L^1(\mathbb{R})}$. Proposition \ref{prop:VarianceBound} shows that $\operatorname{Var}(\mathcal{G}_{g,t,h,\sigma})\lesssim \sigma^2 \norm{g_h'}_{L^1(\mathbb{R})}^2\lesssim\sigma^2$. Since $\operatorname{supp}(g_h)\subset \operatorname{supp}(g)$ for $h\le 1$, Lemma \ref{lem:BoundsforExpectations} implies the lower bound $\EV{\mathcal{G}_{g,t,h,\sigma}}\gtrsim h$. An application of Chebychev's inequality yields $\mathcal{G}_{g,t,h,\sigma} / \EV*{\mathcal{G}_{g,t,h,\sigma}}=1+\mathcal{O}_{\prob{}}(\sigma h^{-1})$ and completes the proof.
		\end{proof}
		\begin{remark}[On the role of the observation window $\Gamma$ in Proposition \ref{prop:VarianceBound} and Lemma \ref{lem:BoundsforExpectations}]
		Note that the bounds in Proposition \ref{prop:VarianceBound} and Lemma \ref{lem:BoundsforExpectations} only depend on the size of the spatial observation window $\Gamma$ and not on the underlying domain $\Lambda$. This is crucial for Section \ref{sec:GrowingObs}, where we consider $\Lambda=\mathbb{R}$ and consider $\abs{\Gamma}\to\infty$.
	\end{remark}

	\subsection*{Application to the occupation time} Let $A\subset\mathbb{R}$ be a bounded interval and fix some time $0\le t\le T$. The occupation time $M(A)$ and the occupation measure $\mu(A)$ of the process $(X_t(y))_{y\in \Gamma}$ at $A$ are defined as
	\begin{equation*}
		M(A)\coloneqq \int_\Gamma \indicator_A(X_t(y))\,\D y,\quad \mu(A)\coloneqq \EV{M(A)} = \int_\Gamma \prob{X_t(y)\in A}\,\D y.
	\end{equation*}
	The following lemma shows that the concentration result of Proposition \ref{prop:SpatailErgodicity} extends to the occupation time.
	\begin{lemma}\label{lem:Occupationtime}
		Grant Assumptions \ref{assump:WellPosedness} \hyperref[assump:WellPosedness]{(well-posedness)} and \ref{assump:key} \hyperref[assump:key]{(noise-scaling)}. Fix $0<t\le T$ and a bounded non-empty interval $A\subset \mathbb{R}$. Then $\mu(A)\sim \abs{A}$ and
		\begin{equation*}
			\frac{M(A)}{\mu(A)}\xrightarrow{\prob{}}1,
		\end{equation*}
		as $\nu\to 0$.
	\end{lemma}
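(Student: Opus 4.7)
The plan is to sandwich the non--Lipschitz indicator $\indicator_A$ between Lipschitz approximations whose derivatives have uniformly bounded $L^1$--norm, and then apply Proposition~\ref{prop:VarianceBound} and Corollary~\ref{cor:DensityBounds}. First, for the expectation asymptotic $\mu(A)\sim |A|$, write $\mu(A)=\int_\Gamma\int_A p_{\nu,t,y}(x)\,\D x\D y$; the two--sided density estimate of Corollary~\ref{cor:DensityBounds} yields $\mu(A)\sim |A|$ directly when $t$ lies in the range covered by its lower bound, and the remaining $t$ can be handled by conditioning on $\mathscr{F}_{t-t_0/2}$ and invoking the Markov property together with moment control of $X_{t-t_0/2}$ from Lemma~\ref{lem:UniformBoudnednessSolution}.

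For the concentration, write $A=[a,b]$ and, for $\epsilon\in(0,(b-a)/2)$, let $g_\epsilon^-,g_\epsilon^+\colon\mathbb{R}\to[0,1]$ be continuous, piecewise--linear trapezoidal functions with $g_\epsilon^-\le \indicator_A\le g_\epsilon^+$, such that $g_\epsilon^-$ vanishes off $A$ and equals $1$ on $[a+\epsilon,b-\epsilon]$, while $g_\epsilon^+$ vanishes off $[a-\epsilon,b+\epsilon]$ and equals $1$ on $A$. Although $\|(g_\epsilon^\pm)'\|_\infty=1/\epsilon$ diverges as $\epsilon\to 0$, the total variations remain bounded,
\begin{equation*}
\|(g_\epsilon^\pm)'\|_{L^1(\mathbb{R})}=2,
\end{equation*}
and $\|g_\epsilon^\pm-\indicator_A\|_{L^1(\mathbb{R})}\le \epsilon$.

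Set $M_\epsilon^\pm\coloneqq \int_\Gamma g_\epsilon^\pm(X_t(y))\,\D y$, so that $M_\epsilon^-\le M(A)\le M_\epsilon^+$. Proposition~\ref{prop:VarianceBound} in its $\|g'\|_{L^1}$--form gives $\Var{M_\epsilon^\pm}\lesssim \sigma^2$ uniformly in $\epsilon$, while the upper density bound of Corollary~\ref{cor:DensityBounds} yields $|\EV*{M_\epsilon^\pm}-\mu(A)|\le |\Gamma|p_{\max}t^{-\alpha/2}\epsilon$. Since $\mu(A)\sim |A|$ is bounded below, for any $\delta>0$ one can first choose $\epsilon$ small enough that $|\EV*{M_\epsilon^\pm}/\mu(A)-1|<\delta/2$. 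Chebyshev's inequality combined with $\sigma(\nu)\to 0$ then gives $\prob{|M_\epsilon^\pm-\EV*{M_\epsilon^\pm}|>\mu(A)\delta/2}\to 0$, which by the sandwich forces $M(A)/\mu(A)\xrightarrow{\prob{}} 1$.

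The main obstacle is precisely the discontinuity of $\indicator_A$, which rules out a direct appeal to Proposition~\ref{prop:SpatailErgodicity}: a naïve trapezoidal approximation blows up $\|g'\|_\infty$. The key point is that Proposition~\ref{prop:VarianceBound} controls the variance by the total variation $\|g'\|_{L^1}$ rather than the much larger $\|g'\|_\infty$, so the approximation stays effective in the $\epsilon\to 0$ limit. This illustrates why the novel $L^1$--type variance bound highlighted in Remark~\ref{rmk:Connection} is the right tool for occupation--time type concentration.
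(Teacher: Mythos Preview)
Your proposal is correct and rests on the same key idea as the paper's proof: approximate $\indicator_A$ by Lipschitz functions whose total variation $\|g'\|_{L^1(\mathbb{R})}$ stays bounded (equal to $2$) even as the approximation sharpens, and then invoke the $L^1$--form of Proposition~\ref{prop:VarianceBound}. The execution differs slightly: the paper uses a single upper approximation $g^{(\epsilon)}\ge \indicator_A$ and a multiplicative three--factor decomposition of $M(A)/\mu(A)$, controlling the mismatch $\int_\Gamma(\indicator_A-g^{(\epsilon)})(X_t(y))\,\D y$ in $L^1(\Omega)$ via Markov's inequality and Lemma~\ref{lem:BoundsforExpectations}, whereas you use a two--sided trapezoidal sandwich $g_\epsilon^-\le \indicator_A\le g_\epsilon^+$ with additive bounds and Chebyshev. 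Both routes are equally short; your sandwich is arguably more transparent. You are also more explicit than the paper about the case $t>t_0$ for the lower bound $\mu(A)\gtrsim|A|$, which the paper handles only implicitly via Lemma~\ref{lem:BoundsforExpectations}; your Markov--property reduction is the natural fix, though note that the lower density constant in Corollary~\ref{cor:DensityBounds} depends on $\|\xi\|_\infty$, so one should combine it with a high--probability bound on $\|X_{t-t_0/2}\|_\infty$ rather than just a moment bound.
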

	\begin{proof}
		See Subsection \ref{subsec:Proofs_Concentration}.
	\end{proof}

	\subsection{Proofs for Proposition \ref{prop:VarianceBound}, Lemmas \ref{lem:BoundConditionalExpectation} and \ref{lem:BoundsforExpectations}}\label{subsec:ProofsDensity}
	
	This subsection contains the proofs of the key results in Subsection \ref{subsec:Concentration}.

	\begin{proof}[Proof of Proposition \ref{prop:VarianceBound}]
		Since $X_0$ is deterministic, the claimed bound for $t=0$ is trivial and we proceed with $0<t\le T$ arbitrary but fixed.\\
		\textbf{Step 1} (The $\norm{g'}_{L^1(\mathbb{R})}$- and the $\norm{g'}_{\infty}$-bounds). 
		The Clark-Ocone formula \eqref{eq:Clark_ocone} implies the representation
	\begin{equation*}
		\mathcal{G}_{g,t,\sigma}-\EV*{\mathcal{G}_{g,t,\sigma}} = \int_0^t \int_\Lambda \EV*{\mathcal{D}_{\tau,z}\mathcal{G}_{g,t,\sigma}\given\mathcal{F}_\tau}\,\mathcal{W}(\D \tau,\D z).
	\end{equation*}
	By It\^o's isometry, it follows that
	\begin{equation*}
		\var\left(\mathcal{G}_{g,t,\sigma}\right) =\EV[][][\bigg]{ \int_0^t\norm*{\EV*{\mathcal{D}\mathcal{G}_{g,t,\sigma}\given\mathcal{F}_\tau}}_{\mathbb{H}}^2\,\D\tau}= \EV[][][\bigg]{\int_0^t\norm[\bigg]{\EV[][][\bigg]{\int_{\Gamma}g'(X_t(y))\mathcal{D}X_t(y)\,\D y\given\mathcal{F}_\tau}}_{\mathbb{H}}^2\,\D\tau }.
	\end{equation*}
	We aim to apply Lemma \ref{lem:CondExpectation_MalliavinIntegration} to control the variance. To this end, we apply Lemma \ref{lem:BoundConditionalExpectation} with $\phi=g'$ to obtain for $0\le \tau<t$ the upper bound
	\begin{equation*}
		\EV*{\abs*{g'(X_t(y))}\given \mathcal{F}_\tau}\le \norm{g'}_{L^1(\mathbb{R})} p_{\max}(t-\tau)^{-\alpha/2} \wedge \norm{g'}_\infty=\colon \kappa(t,\tau),\quad 0<\nu \le \smash{\bar{\nu}},\quad  y\in\Gamma.
	\end{equation*}
		We can apply Lemma \ref{lem:CondExpectation_MalliavinIntegration} with this choice of the upper bound $\kappa$ and $\phi=g'$. Using $\alpha<1$, we obtain
		\begin{align*}
			\var\left(\mathcal{G}_{g,t,\sigma}\right)&\le 2 \sigma^2 C_0^2 \norm{B}^2\abs{\Gamma}\int_0^t \EV{\kappa(t,\tau)^2}\,\D\tau \left(1+e^{2\norm{f'}_\infty t}\norm{f'}_\infty^2 t^2\right)\\
			&\le 2 \sigma^2 C_0^2 \norm{B}^2\abs{\Gamma}\bigg(\frac{\norm{g'}_{L^1(\mathbb{R})}^2 p_{\max}^2t^{1-\alpha}}{1-\alpha} \wedge t\norm{g'}_\infty^2 \bigg) \left(1+e^{2\norm{f'}_\infty t}\norm{f'}_\infty^2 t^2\right).
		\end{align*}

		\noindent\textbf{Step 2} (The $\norm{g'}_{L^2(\mathbb{R})}$-bound). For the bound in terms of $\norm{g'}_{L^2(\mathbb{R})}$ we use the Poincaré inequality (Proposition 3.1 of \citet{Nourdin2009}) to bound
		\begin{equation*}
			\var\left(\mathcal{G}_{g,t,\sigma}\right)\le \EV[][][\bigg]{\int_0^t \norm*{\mathcal{D}_\tau \mathcal{G}_{g,t,\sigma}}_{\mathbb{H}}^2\,\D\tau}=\EV[][][\bigg]{\int_0^t \norm[\bigg]{\int_\Gamma g'(X_t(y))\mathcal{D}_\tau X_t(y)\,\D y}_{\mathbb{H}}^2\,\D\tau }.
			\end{equation*}
		Applying first Lemma 2.11 of \citet{gaudlitzEstimationReactionTerm2023} and then Lemma \ref{lem:BoundsforExpectations} yields
		\begin{align*}
			\var\left(\mathcal{G}_{g,t,\sigma}\right)&\le \sigma^2 \norm{B}^2C_0^2\EV[][][\bigg]{\int_0^t e^{2C_0\norm{f'}_\infty t} \norm{\indicator_\Gamma g'(X_t)}^2\,\D\tau}\\
			&= \sigma^2 \norm{B}^2 C_0^2 e^{2C_0\norm{f'}_\infty t}t \EV[][][\bigg]{\int_\Gamma g'(X_t(y))^2\,\D y}\\
			&\le \sigma^2 \norm{B}^2C_0^2  e^{2C_0\norm{f'}_\infty t}t^{1-\alpha/2}p_{\max} \abs{\Gamma} \norm{g'}_{L^2(\mathbb{R})}^2.\qedhere
		\end{align*}
	\end{proof}
		\begin{proof}[Proof of Lemma \ref{lem:BoundConditionalExpectation}]
		The bound by $\norm{\phi}_\infty$ is clear and we proceed to the $\norm{\phi}_{L^1(\mathbb{R})}$-bound. Using the Markov property from \eqref{eq:RandomField1} we can deduce the equality
		\begin{equation*}
			\EV*{\abs*{\phi(X_t(y))}\given \mathcal{F}_\tau} = \EV*{\abs*{\phi(X_t(y))}\given X_\tau}=\left.\EV*[\smash{(\tau,\xi)}]{\abs*{\phi(X_{t-\tau}(y))}}\right|_{\xi=X_\tau}
		\end{equation*}
		almost surely. Since $X_\tau\in C(\Lambda)$ almost surely, the existence result and the upper bound for the density $p_{\nu,\tau,t-\tau,y}$ of $X_{t-\tau}(y)$ under $\prob[\smash{(\tau,\xi)}]{}$ with deterministic initial condition $\xi=X_\tau$ from Corollary \ref{cor:DensityBounds} can be applied. This shows the bound
		\begin{equation*}
			\EV*{\abs*{\phi(X_t(y))}\given \mathcal{F}_\tau}= \left.\int_\mathbb{R}\abs*{\phi(x)}p_{\nu,\tau,t-\tau,y}(x)\,\D x\right|_{\xi = X_\tau}\le \norm*{\phi}_{L^1(\mathbb{R})}p_{\max}(t-\tau)^{-\alpha/2}
		\end{equation*}
		and completes the proof.
	\end{proof}
	
			\begin{proof}[Proof of Lemma \ref{lem:BoundsforExpectations}]
				Since the initial condition $X_0\in C(\Lambda)$ is deterministic, an application of the upper bound from Corollary \ref{cor:DensityBounds} yields the bound
				\begin{equation*}
					\int_\Gamma \EV*{g(X_t(y))}\,\D y  = \int_\Gamma \int_{\mathbb{R}}g(x)p_{\nu,t,y}(x)\,\D x\D y\le p_{\max}t^{-\alpha/2}\abs{\Gamma}\norm{g}_{L^1(\mathbb{R})}.
				\end{equation*}
				
				For the claimed lower bound, an application of the lower bound from Corollary \ref{cor:DensityBounds} yields for any $0<\Delta \le t\le t_0$ with $t_0$ from \eqref{eq:lower_t_0} the inequality
				\begin{equation*}
					\int_\Gamma \EV{g(X_t(y))}\,\D y = \int_\Gamma \int_{\mathbb{R}}g(x)p_{\nu,t,y}(x)\,\D x\D y\ge p_{\min,\norm{X_0}_\infty,\Xi, \Delta}\abs{\Gamma}\norm{g}_{L^1(\mathbb{R})}.\qedhere
				\end{equation*}
			\end{proof}
						
\section{Extension: Growing observation window}\label{sec:GrowingObs}

In this section we show that the methodology developed in Sections \ref{sec:Balanced_MainResults} and \ref{sec:Density} is not specific to the small diffusivity regime. As a concrete example, we consider the semi-linear stochastic heat equation on $\Lambda = \mathbb{R}$ with space-time white noise. In contrast to the rest of the paper, the diffusivity $\nu=1$ and noise level $\sigma=1$ are constant and the observation window $\Gamma$ is assumed to grow such that $\gamma\coloneqq \abs{\Gamma}\to\infty$. We will see in Theorem \ref{thm:Nonparametric_GrowingDomains} that this regime allows to recover $f(x_0)$ consistently.

For $t>0$ and $y\in\mathbb{R}$ denote by $\phi_t(y)\coloneqq (4\pi t)^{-1/2}e^{-y^2/(4t)}$ the heat kernel on $\mathbb{R}$. Note that we have
\begin{equation*}
	\int_0^t \norm*{\phi_s(y-\MTemptyplaceholder)}^2\,\D s = \frac{\sqrt{2}}{4\sqrt{\pi}}t^{1/2},\quad y\in\mathbb{R},\quad t>0,
\end{equation*}
which allows for similar density estimates as for Example \ref{examp:main} (\ref{num_Examp_a}), see Lemma \ref{lem:DensityBounds_GrowingDomains}. Young's convolution inequality (Theorem 3.9.4 of \citet{bogachevMeasureTheory2007}) implies that the heat semi-group is a contraction on $L^2(\Lambda)=L^2(\mathbb{R})$ and thus $C_0$ from Assumption \ref{assump:WellPosedness} \hyperref[assump:WellPosedness]{(well-posedness)} is equal to one. Let $f\colon\mathbb{R}\to\mathbb{R}$ be globally Lipschitz-continuous with Lipschitz-constant $\norm{f'}_\infty<\infty$. It is a classical result (e.g.\ Section 4 of \citet{nualartGaussianDensityEstimates2009a} or Lemma \ref{lem:aux_WellPosedness} (\ref{num:aux_wellposed_a})), that the (unique) random field solution $Z_t(y)$ to the SPDE
\begin{equation*}
	\D Z_t = \Updelta Z_t\,\D t + F(Z_t)\,\D t + \D W_t,\quad Z_0\equiv 0,\quad 0\le t\le T,
\end{equation*}
on $\mathbb{R}$ is given by
\begin{equation*}
	Z_t(y) = (\phi_t\star Z_0)(y) + \int_0^t (\phi_{t-s}\star F(Z_s))(y)\,\D s + \int_0^t\int_{\mathbb{R}} \phi_{t-s}(y,\eta)\mathcal{W}(\D\eta,\D s),
\end{equation*}
where $\star$ denotes the convolution on $\mathbb{R}$ in the sense that
\begin{equation*}
	(u\star v)(y) \coloneq \int_{\mathbb{R}}u(y-\eta)v(\eta)\,\D\eta,\quad y\in\mathbb{R},
\end{equation*}
for any functions $u\colon \mathbb{R}\to\mathbb{R}$, $v\colon\mathbb{R}\to\mathbb{R}$ such that $u(y-\MTemptyplaceholder)v(\MTemptyplaceholder)\in L^1(\mathbb{R})$ for all $y\in\mathbb{R}$. Lemma \ref{lem:aux_WellPosedness} (\ref{num:aux_wellposed_c}) shows that $Z_t$ is also an analytically weak solution when testing with functions belonging to $C_c^\infty(\mathbb{R})$.

We consider the estimator given by \eqref{eq:Estimator}, only changing the index $\sigma$ to $\gamma$ to highlight its dependency on the size of the observation window $\gamma=\abs{\Gamma}$. 
		The methodology developed in Sections \ref{sec:Balanced_MainResults} and \ref{sec:Density} allows us to prove the following convergence result of the estimation error.
		\begin{theorem}\label{thm:Nonparametric_GrowingDomains}
			Fix $1\le\beta\le 2$ and $L>0$. Assume that $\sqrt{\gamma} h\to\infty$, then the estimation error of $\hat{f}(x_0)_{h,\gamma}$ from \eqref{eq:Estimator} satisfies
			\begin{equation*}
				\hat{f}(x_0)_{h,\gamma}-f(x_0) = \mathcal{O}(h^{\beta}) + \mathcal{O}_{\prob{}}(\gamma^{-1/2} h^{-1/2}),
			\end{equation*}
			uniformly in $f\in \Sigma(\beta,L)$. More precisely, we can decompose the estimation error as
			\begin{equation*}
				\hat{f}(x_0)_{h,\gamma}-f(x_0) = B_{h,\gamma} + A_{h,\gamma}^{\smash{-}}Z_{h,\gamma}^{\smash{-}} + A_{h,\gamma}^{\smash{+}}Z_{h,\gamma}^{\smash{+}},\label{eq:ErrorDecomposition2_GrowingDomains}
			\end{equation*}
			where
			\begin{enumerate}[(a)]
				\item $B_{h,\gamma}= \mathcal{O}(h^{\beta})$,\label{num:MainThm_f_GrowingDomains}
				\item $Z_{h,\gamma}^{\smash{-}}= \mathcal{O}_{\prob{}}(1)$ and $Z_{h,\gamma}^{\smash{+}}= \mathcal{O}_{\prob{}}(1)$,\label{num:MainThm_b_GrowingDomains}
				\item $Z_{h,\gamma}^{\smash{-}}$ and $Z_{h,\gamma}^{\smash{+}}$ are uncorrelated random variables,\label{num:MainThm_a_GrowingDomains}
				\item $(Z_{h,\gamma}^{\smash{-}},Z_{h,\gamma}^{\smash{+}})\transpose\xrightarrow{d} N(0,\operatorname{Id}_{2\times 2})$,\label{num:MainThm_d_GrowingDomains}
				\item $A_{h,\gamma}^{\smash{-}} \sim h^{-1/2}\gamma^{-1/2} + \smallo_{\prob{}}(h^{-1/2}\gamma^{-1/2})$ and $A_{h,\gamma}^{\smash{+}} \sim h^{-1/2}\gamma^{-1/2} + \smallo_{\prob{}}(h^{-1/2}\gamma^{-1/2})$.\label{num:MainThm_e_GrowingDomains}
			\end{enumerate}
		\end{theorem}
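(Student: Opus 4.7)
The plan is to mirror the three-step strategy of Theorem \ref{thm:Nonparametric}, replacing the small-noise concentration $\sigma\to 0$ by the growing-window concentration $\gamma\to\infty$. The tools of Section \ref{sec:Density} were explicitly designed to permit this swap: the bounds in Proposition \ref{prop:VarianceBound} and Lemma \ref{lem:BoundsforExpectations} only involve $|\Gamma|$, not the size of the underlying domain $\Lambda$, and the density bounds of Corollary \ref{cor:DensityBounds} apply via the $\mathbb{R}$-analogue (Lemma \ref{lem:DensityBounds_GrowingDomains}) uniformly in $y\in\mathbb{R}$. With these inputs, Proposition \ref{prop:VarianceBound} gives
\[
\var\!\Bigl(\int_\Gamma g_h(Z_t(y))\,\D y\Bigr)\lesssim \gamma\,\|g_h'\|_{L^1(\mathbb{R})}^2\lesssim \gamma,
\]
while $\mathbb{E}[\int_\Gamma g_h(Z_t(y))\,\D y]\sim \gamma h$ for non--negative, compactly supported $g$.

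For Step 1 (item (\ref{num:MainThm_f_GrowingDomains})), I would repeat verbatim the argument of Step 1 of Theorem \ref{thm:Nonparametric}: the kernel properties (\ref{num_RmkIntegrate_a})--(\ref{num_RmkIntegrate_c}) together with $f\in\Sigma(\beta,L)$ and $\supp(K_{h,\gamma})\subset[x_0-h,x_0+h]$ yield $|B_{h,\gamma}|\le L h^\beta$ by a Taylor expansion.

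For Step 2 (concentration of the data--driven weights and item (\ref{num:MainThm_e_GrowingDomains})), I would apply Chebyshev's inequality to each of $\mathcal T^{\pm,1}_{h,\gamma}, \mathcal T^{\pm,2}_{h,\gamma}, \mathcal I^{\pm}_{h,\gamma}$: the variance bound and the expectation scaling give
\[
\frac{\mathcal T^{\pm,1}_{h,\gamma}}{\mathbb E[\mathcal T^{\pm,1}_{h,\gamma}]} = 1 + \mathcal O_{\prob{}}\bigl((\sqrt{\gamma}\,h)^{-1}\bigr)\xrightarrow{\prob{}} 1
\]
under the standing assumption $\sqrt{\gamma}h\to\infty$, and analogously for the other quantities (the scalings $\mathbb E[\mathcal T^{\pm,2}_{h,\gamma}]\sim\gamma h^2$, $\mathbb E[\mathcal J_{h,\gamma}]\sim\gamma^2 h^3$, $\mathbb E[\mathcal I^\pm_{h,\gamma}]\sim\gamma h$ follow from the density bounds). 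Combining them,
\[
A^\pm_{h,\gamma} = \frac{\mathcal T^{\mp,2}_{h,\gamma}\,\mathbb E[\mathcal I^\pm_{h,\gamma}]^{1/2}}{\mathcal J_{h,\gamma}} \sim \frac{(\gamma h^2)(\gamma h)^{1/2}}{\gamma^2 h^3} = \gamma^{-1/2}h^{-1/2}
\]
up to a $\smallo_{\prob{}}(\gamma^{-1/2}h^{-1/2})$ correction. Item (\ref{num:MainThm_b_GrowingDomains}) follows from $\var(Z^\pm_{h,\gamma})=1$ by It\^o's isometry together with $\mathbb E[\mathcal I^\pm_{h,\gamma}]>0$.

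For Step 3 (items (\ref{num:MainThm_a_GrowingDomains}) and (\ref{num:MainThm_d_GrowingDomains})), since space--time white noise corresponds to $\Sigma\equiv 1$, Assumption \ref{assump:CLT} \hyperref[assump:CLT]{(Noise covariance function)} holds. The almost--disjoint supports of $K_-$ and $K_+$ make the cross quadratic variation $\int_0^T\iprod{\indicator_\Gamma K_{-,h}(Z_t)}{\indicator_\Gamma K_{+,h}(Z_t)}_{\mathbb{H}}\D t$ vanish, giving uncorrelatedness. For any $a,b\in\mathbb{R}$, the quadratic variation of the numerator of $aZ^-_{h,\gamma}+bZ^+_{h,\gamma}$ collapses to $\mathbb E[\mathcal I^+]\mathcal I^- a^2 + \mathbb E[\mathcal I^-]\mathcal I^+ b^2$, and dividing by $\mathbb E[\mathcal I^-]\mathbb E[\mathcal I^+](a^2+b^2)$ yields $1+\smallo_{\prob{}}(1)$ by Step 2. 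The martingale CLT (Theorem 5.5.4 in \citep{Liptser1989}) then gives joint asymptotic normality, which in conjunction with uncorrelatedness yields (\ref{num:MainThm_d_GrowingDomains}).

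The main obstacle is verifying that the density bounds and the Clark--Ocone variance bound genuinely scale linearly in $|\Gamma|$ on the unbounded domain $\Lambda=\mathbb{R}$, so that the ratio $\var/\mathbb E^2\sim(\gamma h^2)^{-1}$ produces consistency precisely under $\sqrt{\gamma}h\to\infty$. This rests on the uniform-in-$y\in\mathbb{R}$ Gaussian upper bound for $p_{1,t,y}$, which makes the conditional expectation in \eqref{eq:Clark_ocone} (via Lemmas \ref{lem:CondExpectation_MalliavinIntegration}--\ref{lem:BoundConditionalExpectation}) contribute the factor $|\Gamma|=\gamma$ and not the volume of the ambient $\Lambda$ -- a feature the author explicitly highlights in the remark following Proposition \ref{prop:SpatailErgodicity}.
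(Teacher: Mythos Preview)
Your proposal is correct and takes essentially the same approach as the paper: the paper's proof simply states that it is analogous to that of Theorem \ref{thm:Nonparametric}, invoking the growing-domain analogue of Lemma \ref{lem:DetailsStochasticError} (which packages precisely the scalings $\EV{\mathcal T^{\pm,1}_{h,\gamma}}\sim\gamma h$, $\EV{\mathcal T^{\pm,2}_{h,\gamma}}\sim\gamma h^2$, $\EV{\mathcal I^\pm_{h,\gamma}}\sim\gamma h$ and their concentration under $\sqrt{\gamma}h\to\infty$ that you spell out in Step~2). Your explicit computation of $A^\pm_{h,\gamma}\sim(\gamma h^2)(\gamma h)^{1/2}/(\gamma^2 h^3)=\gamma^{-1/2}h^{-1/2}$ and your observation that space--time white noise automatically satisfies Assumption \ref{assump:CLT} with $\Sigma\equiv 1$ (so that items (\ref{num:MainThm_a_GrowingDomains})--(\ref{num:MainThm_e_GrowingDomains}) require no extra hypothesis) make explicit what the paper leaves implicit.
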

		\begin{proof}
			See Section \ref{sec:Proofs_GrowingDomain}.
		\end{proof}
		
		\begin{remark}
			Corollaries \ref{cor:OptimalRate}, \ref{cor:CLT} and \ref{cor:Test} can be restated in this setting without difficulties.
		\end{remark}

		Analogously to Section \ref{sec:Balanced_MainResults}, the main building block to analyse the estimation error of $\hat{f}(x_0)_{h,\gamma}$ is the concentration due to spatial ergodicity of functionals
		\begin{equation*}
			\mathcal{G}_{g,\gamma}\coloneqq \frac{1}{\abs{\Gamma}}\int_0^T \int_{\Gamma}g(Z_t(y))\,\D y\D t
		\end{equation*}
		for suitable functions $g\colon\mathbb{R}\to\mathbb{R}$ as $\gamma=\abs{\Gamma}\to\infty$. These are summarised in the following lemma.	
				
		\begin{lemma}\label{lem:Ergodicity_GrowingDomains}
			The functional $\mathcal{G}_{g,\gamma}$ satisfies the following properties as $\gamma=\abs{\Gamma}\to\infty$.
			\begin{enumerate}[(a)]
				\item\label{num:Growingobs_a} Let $g$ be locally Lipschitz-continuous with almost everywhere existing derivative $g'$. Then
			\begin{equation*}
				\var\left(\mathcal{G}_{g,\gamma}\right)= \gamma^{-1}\mathcal{O}\left(\norm{g'}_{L^1(\mathbb{R})}^2\wedge \norm{g'}_{L^2(\mathbb{R})}^2\wedge \norm{g'}_\infty^2\right),
			\end{equation*}
			where the constant in $\mathcal{O}$ does not depend on $\Gamma$ or $g$.
			\item Assume that $g\ge 0$ and $g\in L^1(\mathbb{R})$, then 
			\begin{equation*}
				\EV*{\mathcal{G}_{g,\gamma}}\le C \norm{g}_{L^1(\mathbb{R})}
			\end{equation*}
			for some constant $0<C<\infty$ that does not depend on $\Gamma$ or $g$.
			\item Assume that $g\ge 0$, $g\in L^1(\mathbb{R})$ and that $g$ has compact support. Then
			\begin{equation*}
				\EV*{\mathcal{G}_{g,\gamma}}\ge c\norm{g}_{L^1(\mathbb{R})}
			\end{equation*}
			for a constant $0<c<\infty$ that does not depend on $\Gamma$ or $g$.
			\item Let $g\ge 0$ be Lipschitz-continuous and have bounded support. Then
			\begin{equation*}
				\frac{\int_0^T\int_\Gamma g_h(Z_t(y))\,\D y\D t}{\int_0^T\int_\Gamma \EV*{g_h(Z_t(y))}\,\D y\D t}\xrightarrow{\prob{}}1,
			\end{equation*}
			as $\gamma=\abs{\Gamma}\to\infty$, provided $\sqrt{\gamma} h \to \infty$.
			\end{enumerate}
		\end{lemma}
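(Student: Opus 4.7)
The plan is to transfer the machinery of Section \ref{sec:Density} to the present setting, exploiting the observation already made at the end of Subsection \ref{subsec:Concentration} that the variance bound of Proposition \ref{prop:VarianceBound} and the expectation bounds of Lemma \ref{lem:BoundsforExpectations} depend only on $\abs{\Gamma}$, not on the ambient domain $\Lambda$. In the present setting we have $C_0=1$ by Young's inequality, $\norm{B}=1$, and $\alpha = 1/2$. Moreover, Lemma \ref{lem:DensityBounds_GrowingDomains} supplies the analog of Corollary \ref{cor:DensityBounds}, i.e.\ uniform upper bounds on the Lebesgue--density $p_{t,y}$ of $Z_t(y)$ for $y\in\mathbb{R}$ and lower bounds valid for $y\in\Gamma$ and $x$ in a bounded set, so that both Lemma \ref{lem:CondExpectation_MalliavinIntegration} and Lemma \ref{lem:BoundConditionalExpectation} apply verbatim with $\sigma = 1$.

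For part (\ref{num:Growingobs_a}) the argument is the Clark--Ocone computation from Proposition \ref{prop:VarianceBound}: writing
\[
\var(\mathcal{G}_{g,\gamma}) = \gamma^{-2}\EV[][][\bigg]{\int_0^T \norm[\bigg]{\EV[][][\bigg]{\int_0^T \int_{\Gamma} g'(Z_t(y))\mathcal{D}_\tau Z_t(y)\,\D y\D t\given \mathscr{F}_\tau}}_{\mathbb{H}}^2\D\tau},
\]
the conditional expectation is controlled by $\kappa(t,\tau) = \norm{g'}_{L^1(\mathbb{R})}p_{\max}(t-\tau)^{-1/4}\wedge \norm{g'}_\infty$ via Lemma \ref{lem:BoundConditionalExpectation}. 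Lemma \ref{lem:CondExpectation_MalliavinIntegration} then yields an upper bound proportional to $\abs{\Gamma} = \gamma$, and the prefactor $\gamma^{-2}$ produces the desired $\gamma^{-1}$ scaling. The $\norm{g'}_{L^2(\mathbb{R})}$--bound (which I read as the intended middle term) comes from the Poincaré--inequality route used in Step 2 of the proof of Proposition \ref{prop:VarianceBound}, combined with the uniform upper density bound of Lemma \ref{lem:DensityBounds_GrowingDomains}. Parts (b) and (c) are immediate adaptations of Lemma \ref{lem:BoundsforExpectations}: the upper and lower density bounds give the claim after averaging over $\Gamma$ and normalising by $\abs{\Gamma}$.

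Part (d) combines the preceding items through Chebyshev's inequality, exactly as in the proof of Proposition \ref{prop:SpatailErgodicity}. Since $\norm{g_h'}_{L^1(\mathbb{R})} = \norm{g'}_{L^1(\mathbb{R})} \sim 1$, part (a) yields $\var(\mathcal{G}_{g_h,\gamma}) \lesssim \gamma^{-1}$. Since $g_h\ge 0$ has support contained in a fixed neighborhood of $x_0$ for bounded $h$, part (c) applied to $g_h$ (using $\norm{g_h}_{L^1(\mathbb{R})}=h\norm{g}_{L^1(\mathbb{R})}$) gives $\EV{\mathcal{G}_{g_h,\gamma}}\gtrsim h$. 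Hence
\[
\frac{\mathcal{G}_{g_h,\gamma}}{\EV{\mathcal{G}_{g_h,\gamma}}} = 1 + \mathcal{O}_{\prob{}}\bigl((\sqrt{\gamma}\,h)^{-1}\bigr) \xrightarrow{\prob{}} 1
\]
whenever $\sqrt{\gamma}\,h \to \infty$.

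The only genuine subtlety is ensuring that the upper density bound of Lemma \ref{lem:DensityBounds_GrowingDomains} is uniform in $y\in\mathbb{R}$ (not only $y\in\Gamma$), which is what allows Lemma \ref{lem:BoundConditionalExpectation} to be invoked with a starting configuration $\xi = Z_\tau$ whose sup--norm we do not control on all of $\mathbb{R}$; in the present space--time white--noise setting this is precisely the content of Lemma \ref{lem:DensityBounds_GrowingDomains}, which handles the extension from a bounded domain to $\Lambda = \mathbb{R}$ via the translation invariance of the heat kernel. Once this is in place, no new probabilistic input is required beyond the Clark--Ocone calculus already developed in Section \ref{sec:Density}.
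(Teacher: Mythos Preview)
Your proposal is correct and follows exactly the route the paper takes: the paper's own proof is a one--line reference stating that the argument is analogous to Proposition \ref{prop:VarianceBound}, Lemma \ref{lem:BoundsforExpectations} and Proposition \ref{prop:SpatailErgodicity}, with Corollary \ref{cor:DensityBounds} replaced by Corollary \ref{cor:DensityBounds_GrowingDomains}. Your additional remarks---the $\gamma^{-2}$ normalisation producing the $\gamma^{-1}$ rate, the interpretation of the middle term as $\norm{g'}_{L^2(\mathbb{R})}^2$, and the need for the upper density bound to be uniform in $y\in\mathbb{R}$---are all accurate and simply make explicit what the paper leaves implicit.
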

		\begin{proof}
		See Section \ref{sec:Proofs_GrowingDomain}.
		\end{proof}
		\begin{remark}
			The bound for the variance in (\ref{num:Growingobs_a}) of Lemma \ref{lem:Ergodicity_GrowingDomains} in terms of $\norm{g'}_\infty$ is known from Theorem 1.6 of \citet{chenSpatialErgodicitySPDEs2021}. As mentioned in Remark \ref{rmk:Connection}, this bound is not sufficiently sharp for our purposes.
		\end{remark}

		\section*{Acknowledgements}
		Many fruitful discussions of the author with Markus Rei\ss, Gregor Pasemann, Randolf Altmeyer and Eric Ziebell are gratefully acknowledged. This research has been partially funded by Deutsche Forschungsgemeinschaft (DFG) -- Project-ID 410208580 -- IRTG2544 and DFG -- Project-ID 318763901 -- SFB1294.
\appendix

	\section{Results for Examples \ref{examp:main} (\ref{num_Examp_a})--(\ref{num_Examp_c})}\label{sec:Proofsexample*}

	This section contains the proofs for Examples \ref{examp:main} (\ref{num_Examp_a})-(\ref{num_Examp_c}). As $A_t$ does not depend on $t$ in Examples \ref{examp:main} (\ref{num_Examp_a})-(\ref{num_Examp_c}), we write $G_{\nu(t-s)}$ instead of $G_{\nu,t,s}$ in the following. We introduce the heat kernel on $\mathbb{R}^d$, which is given by $\phi_t(y)= (4\pi t)^{-d/2}e^{-\abs{y}^2/(4t)}$ for $y\in\mathbb{R}^d$ and $t>0$.
	
	\begin{remark}(Scaling of the heat kernel).\label{rmk:Scaling_Heat_Kernel}
	We first prove that if $G_{\nu \MTemptyplaceholder}(\MTemptyplaceholder,\MTemptyplaceholder)$ is the Dirichlet/Neumann heat kernel on $\Lambda$ with diffusivity level $\nu$, then
	\begin{equation}	 
		G_t^\nu(\tilde{y},\tilde{\eta})\coloneqq \nu^{d/2}G_{\nu t}(\nu^{1/2}\tilde{y},\nu^{1/2}\tilde{\eta}),\quad \tilde{y},\tilde{\eta}\in \Lambda_\nu\coloneqq \nu^{-1/2}\Lambda,\label{eq:Scaling_Heat_kernel}
	\end{equation}
	is the Dirichlet/Neumann heat kernel on $\Lambda_\nu$ with diffusivity level 1. To prove the claim, first note that
	\begin{equation}
		\frac{\partial}{\partial t} G_t^\nu(\tilde{y},\tilde{\eta}) = \frac{\partial}{\partial t}(\nu^{d/2}G_{\nu t}(\nu^{1/2}\tilde{y},\nu^{1/2}\tilde{\eta}))  = \Updelta(\nu^{d/2}G_{\nu t}(\nu^{1/2}\tilde{y},\nu^{1/2}\tilde{\eta})) = \Updelta G_t^\nu(\tilde{y},\tilde{\eta}),\label{eq:aux_green1}
	\end{equation}
	for any $\nu>0$, $t>0$ and $\tilde{y},\tilde{\eta}\in \Lambda_\nu$. Furthermore,
	\begin{align}
	\begin{split}
		\int_{\Lambda_\nu}G_t^\nu(\tilde{y},\tilde{\eta})\xi(\tilde{\eta})\,\D\tilde{\eta} &= \nu^{d/2}\int_{\Lambda_\nu} G_{\nu t}(\nu^{1/2}\tilde{y},\nu^{1/2}\tilde{\eta})\xi(\tilde{\eta}) \\
		&= \int_\Lambda G_{\nu t}(\nu^{1/2}\tilde{y},\eta)\xi(\nu^{-1/2}\eta)\,\D\eta\xrightarrow{t\to 0}\xi(\tilde{y}),\quad \xi\in C(\Lambda_\nu),
		\end{split}\label{eq:aux_green2}
	\end{align}
	for every $\nu>0$ and $\tilde{y}\in \Lambda_\nu$. The required behaviour of $G_t^\nu$ at the boundary of $\Lambda_\nu$ for Dirichlet/Neumann boundary conditions is inherited from $G_{\nu t}$. Together with \eqref{eq:aux_green1} and \eqref{eq:aux_green2}, this shows \eqref{eq:Scaling_Heat_kernel}.
	\end{remark}

	\subsection{Example \ref{examp:main} (\ref{num_Examp_a})}\label{subsubsec:Example_a}
	
	\begin{lemma}\label{lem:zoomingout}
		Consider the setting of Example \ref{examp:main} (\ref{num_Examp_a}) and define the spatially inflated process $Y_t(\tilde{y})\coloneqq X_t(\nu^{1/2}\tilde{y})$ for $\tilde{y}\in\Lambda_\nu=\nu^{-1/2}\Lambda$ and $0\le t\le T$. Then $(Y_t)_{t\in[0,T]}$ is a random field solution of the SPDE
		\begin{equation}
			\D Y_t = \Updelta Y_t\,\D t + F(Y_t)\,\D t + \nu^{-1/4}\sigma\,\D \bar{W}_t,\quad 0\le t\le T,\quad Y_0=X_0(\nu^{1/2}\MTemptyplaceholder),\label{eq:SPDE_Y}
		\end{equation}
		on $\Lambda_\nu$ with space-time white noise $\D\bar{W}_t/\D t$ on $\Lambda_\nu$.
	\end{lemma}
	\begin{proof}
	Using \eqref{eq:Scaling_Heat_kernel}, we obtain for all $\tilde{y}\in\Lambda_\nu$ and $0\le t\le T$ the distributional equality
	\begin{align*}
		Y_t(\tilde{y})&=X_t(\nu^{1/2}\tilde{y})\\
		&=\int_\Lambda G_{\nu t}(\nu^{1/2}\tilde{y},\eta)X_0(\eta)\,\D\eta + \int_0^t\int_\Lambda G_{\nu (t-s)}(\nu^{1/2}\tilde{y},\eta)f(X_s(\eta))\,\D\eta\D s\\
		&\quad + \sigma\int_0^t\int_\Lambda G_{\nu (t-s)}(\nu^{1/2}\tilde{y},\eta)\mathcal{W}(\D\eta,\D s)\\
		&\overset{d}{=}\int_{\Lambda_\nu} G_{t}^\nu(\tilde{y},\tilde{\eta})X_0(\nu^{1/2}\tilde{\eta})\,\D\tilde{\eta} + \int_0^t\int_{\Lambda_\nu} G_{t-s}^\nu(\tilde{y},\tilde{\eta})f(X_s(\nu^{1/2}\tilde{\eta}))\,\D\tilde{\eta}\D s\\
		&\quad+\nu^{-1/4}\sigma\int_0^t\int_{\Lambda_\nu} G_{t-s}^\nu(\tilde{y},\tilde{\eta})\bar{\mathcal{W}}(\D\tilde{\eta},\D s)\\
		&=\int_{\Lambda_\nu} G_{t}^\nu(\tilde{y},\tilde{\eta})Y_0(\tilde{\eta})\,\D\tilde{\eta} + \int_0^t\int_{\Lambda_\nu} G_{t-s}^\nu(\tilde{y},\tilde{\eta})f(Y_s(\tilde{\eta}))\,\D\tilde{\eta}\D s\\
		&\quad+ \nu^{-1/4}\sigma\int_0^t\int_{\Lambda_\nu} G_{t-s}^\nu(\tilde{y},\tilde{\eta})\bar{\mathcal{W}}(\D\tilde{\eta},\D s).
	\end{align*}
	Since the random field solution to \eqref{eq:SPDE_Y} is strongly (in a probabilistic sense) unique, the claim follows.
	\end{proof}

	\begin{remark}[On Lemma \ref{lem:zoomingout} and exploding spatial H\"older-norms]
	Viewing $X_t$ as a spatially squeezed version of an SPDE on a growing domain $\Lambda_\nu$ explains that the H\"older-norms of $X_t$ explode as $\nu\to 0$ for any fixed time $0<t\le T$, whereas its $\mathcal{H}$-norm remains finite (compare Proposition 3.14 of \citet{gaudlitzEstimationReactionTerm2023}).
	\end{remark}
		
	 	\begin{lemma}\label{lem:Variance_Control_Mainexample*}
			In the setting of Example \ref{examp:main} (\ref{num_Examp_a}) we have
			\begin{equation*}
				\int_0^t \norm*{G_{\nu,t,s}(y,\MTemptyplaceholder)}_{\mathbb{H}}^2\,\D s
				\begin{cases}
				\gtrsim t^{1/2}\nu^{-1/2},&\text{uniformly in }y\in\Gamma, 0\le t\le T, 0<\nu\le \bar{\nu},\\
				\lesssim t^{1/2}\nu^{-1/2},&\text{uniformly in }y\in\Lambda, 0\le t\le T, \nu>0,
				\end{cases}
			\end{equation*}
			for any fixed $\bar{\nu}>0$.
			\end{lemma}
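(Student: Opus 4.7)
The plan is to unfold the Hilbert space norm, use the heat kernel scaling \eqref{eq:Scaling_Heat_kernel} to reduce to the unit diffusivity problem on the rescaled domain $\Lambda_\nu = \nu^{-1/2}\Lambda$, and then compare the Dirichlet heat kernel on $\Lambda_\nu$ with the free heat kernel $\phi_t$ on $\mathbb{R}$. Since $B=\operatorname{Id}$ and $d=1$, we have $\|\cdot\|_{\mathbb{H}} = \|\cdot\|_{L^2(\Lambda)}$, and the scaling identity $G_{\nu t}(y,\eta)=\nu^{-1/2}G_t^\nu(\nu^{-1/2}y,\nu^{-1/2}\eta)$ combined with a change of variables gives
\begin{equation*}
\int_0^t\|G_{\nu(t-s)}(y,\cdot)\|_{L^2(\Lambda)}^2\,\D s \;=\; \nu^{-1/2}\int_0^t \int_{\Lambda_\nu}G_{t-s}^\nu(\nu^{-1/2}y,\tilde\eta)^2\,\D\tilde\eta\,\D s,
\end{equation*}
so the problem reduces to bounding the inner integral by constants depending only on $t$ (and not on $\nu$) for $\tilde y\in\nu^{-1/2}\Gamma$.

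For the upper bound, I would invoke the standard comparison $0\le G_t^\nu(\tilde y,\tilde\eta)\le \phi_t(\tilde y-\tilde\eta)$ for the Dirichlet heat kernel on $\Lambda_\nu\subset\mathbb{R}$ (a consequence of the maximum principle, or equivalently of the Feynman--Kac representation with killing at the boundary). Then $\int_{\Lambda_\nu}G_{t-s}^\nu(\tilde y,\tilde\eta)^2\,\D\tilde\eta\le \int_\mathbb{R}\phi_{t-s}(\tilde y-\tilde\eta)^2\,\D\tilde\eta = \phi_{2(t-s)}(0)=(8\pi(t-s))^{-1/2}$, which integrates to a constant times $t^{1/2}$. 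This is uniform in $\tilde y\in\Lambda_\nu$ and $\nu>0$, producing the claimed $\lesssim t^{1/2}\nu^{-1/2}$.

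For the lower bound, I would apply Chapman--Kolmogorov together with the symmetry of the Dirichlet kernel to rewrite $\int_{\Lambda_\nu}G_{t-s}^\nu(\tilde y,\tilde\eta)^2\,\D\tilde\eta = G_{2(t-s)}^\nu(\tilde y,\tilde y)$. It then suffices to show $G_{2(t-s)}^\nu(\tilde y,\tilde y)\ge \tfrac{1}{2}\phi_{2(t-s)}(0)$ uniformly for $\tilde y\in \nu^{-1/2}\Gamma$ and $0<\nu\le\bar\nu$. Here I would use the method of images representation
\begin{equation*}
G_t^\nu(\tilde y,\tilde y)=\phi_t(0)+\sum_{\mathrm{images}}\pm\phi_t(\tilde y-\tilde y_{\mathrm{image}}),
\end{equation*}
noting that the nearest image of $\tilde y$ lies at distance at least $2\,\operatorname{dist}(\tilde y,\partial\Lambda_\nu)\ge 2\nu^{-1/2}\operatorname{dist}(\Gamma,\partial\Lambda)$, so each image term is bounded by $C(t-s)^{-1/2}\exp(-c\nu^{-1}/(t-s))$. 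Since $0<t-s\le T$, choosing $\bar\nu$ small enough makes the sum of image contributions at most $\tfrac12\phi_{2(t-s)}(0)$ uniformly, giving the desired lower bound $\gtrsim (t-s)^{-1/2}$; integration in $s$ then yields $\gtrsim t^{1/2}\nu^{-1/2}$.

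The main obstacle is the lower bound: one must ensure the image/reflection correction remains uniformly negligible as $t-s\to 0$ \emph{and} as $\nu\to 0$. This is precisely where the hypothesis $\operatorname{dist}(\Gamma,\partial\Lambda)>0$ enters, since it forces $\operatorname{dist}(\nu^{-1/2}\Gamma,\partial\Lambda_\nu)\to\infty$, making the image terms exponentially small. A minor care must also be taken if $\Lambda$ is not an interval; then one works with the underlying interval containing each connected component or appeals to a general domain monotonicity principle to reduce to the interval case.
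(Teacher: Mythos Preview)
Your scaling reduction and upper bound are correct and coincide with the paper's argument. For the lower bound the paper proceeds differently: it invokes a Gaussian lower bound $G_{\nu s}(y,\eta)\ge c_1(s\nu)^{-1/2}\phi_{c_2}((s\nu)^{-1/2}(y-\eta))$ from Varopoulos, restricts the $\eta$-integral to $\Gamma$, and uses convexity of the interval $\Gamma$ (together with $y\in\Gamma$) to obtain the nesting $(T\bar\nu)^{-1/2}(y-\Gamma)\subset(s\nu)^{-1/2}(y-\Gamma)$, which produces constants depending on $\bar\nu$ but valid for every fixed $\bar\nu>0$.

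Your Chapman--Kolmogorov plus method-of-images route is more self-contained, but as written it has a gap: you write ``choosing $\bar\nu$ small enough'', whereas the lemma is stated for an \emph{arbitrary} fixed $\bar\nu>0$ that you are not free to shrink. Concretely, the ratio of the nearest-image term to $\phi_{2(t-s)}(0)$ equals $\exp\bigl(-\delta^2\nu^{-1}/(2(t-s))\bigr)$ with $\delta=\operatorname{dist}(\Gamma,\partial\Lambda)$; for $t-s$ near $T$ and $\nu$ near a large $\bar\nu$ this can be close to $1$, and after summing all images the total correction may exceed $\tfrac12\phi_{2(t-s)}(0)$. The fix is immediate and requires no extra compactness argument: your image estimate \emph{does} give $G_{2r}^\nu(\tilde y,\tilde y)\ge\tfrac12\phi_{2r}(0)$ uniformly for all $r\le r_0:=c\,\delta^2/\bar\nu$ (some absolute $c>0$), $0<\nu\le\bar\nu$ and $\tilde y\in\nu^{-1/2}\Gamma$. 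Integrating only over this range already yields
\[
\int_0^t G_{2r}^\nu(\tilde y,\tilde y)\,\D r\;\ge\;\tfrac12\int_0^{\min(t,r_0)}(8\pi r)^{-1/2}\,\D r\;=\;(8\pi)^{-1/2}\min(t,r_0)^{1/2}\;\ge\;(8\pi)^{-1/2}\min\bigl(1,(r_0/T)^{1/2}\bigr)\,t^{1/2},
\]
which is the claimed lower bound with a $\bar\nu$-dependent constant. (Your closing remark about $\Lambda$ possibly not being an interval is unnecessary here: in Example~\ref{examp:main}\,(\ref{num_Examp_a}) the domain is explicitly a bounded open interval in $\mathbb{R}$.)
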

		\begin{proof}
	Note that \eqref{eq:Scaling_Heat_kernel} implies for all time points $0<t\le T$ and locations $y\in\Lambda$ the equality
	\begin{align}
	\begin{split}
		\int_0^t \norm*{G_{\nu(t-s)}(y,\MTemptyplaceholder)}_{\mathbb{H}}^2\,\D s&= \int_0^t\int_\Lambda G_{\nu(t-s)}(y,\eta)^2\,\D \eta\D s= \nu^{1/2}\int_0^t\int_{\Lambda_\nu}G_{\nu(t-s)}(y,\nu^{1/2}\tilde{\eta})^2\,\D\tilde{\eta}\D s\\
		&=\nu^{-1/2}\int_0^t \int_{\Lambda_\nu}G_{t-s}^\nu(\nu^{-1/2}y,\tilde{\eta})^2\,\D\tilde{\eta}\D s.\end{split}
	\label{eq:G_estimate_bound_to_unbounded}
	\end{align}
	By interpreting $G_{t}^\nu$ as a transition density of a Brownian motion killed outside of $\Lambda_\nu$, the inequality 
	\begin{equation*}
		 G_t^{\bar{\nu}}(y,\eta)\le G_t^\nu(y,\eta)\le \phi_{t}(y-\eta),\quad t>0,\quad y,\eta\in\mathbb{R},\quad 0<\nu\le\bar{\nu}.
	\end{equation*}
	follows, see Equation (2.86) of \citet{nualartMalliavinCalculusRelated2006}.\\	
		For the claimed upper bound, we use \eqref{eq:G_estimate_bound_to_unbounded} to compute for any $y\in\Lambda$
		\begin{align*}
			\nu^{1/2}\int_0^t \norm*{G_{\nu(t-s)}(y,\MTemptyplaceholder)}_{\mathbb{H}}^2\,\D s&\le \int_0^t \int_{\Lambda_\nu}\phi_{t-s}(\nu^{-1/2}y-\eta)^2\,\D\eta\D s\le \int_0^t \int_{\mathbb{R}}\phi_{t-s}(\nu^{-1/2}y-\eta)^2\,\D\eta\D s\\
			&\lesssim \int_0^t (t-s)^{-1/2}\,\D s\lesssim t^{1/2}.
		\end{align*}
		We proceed with the claimed lower bound and fix $y\in\Gamma$, $0<t\le T$ and $\bar{\nu}>0$. An application of the lower bound $G_{\nu s}(y,\eta)\ge c_1 (s\nu)^{-1/2} \phi_{c_2}((s\nu)^{-1/2}(y-\eta))$ with constants $c_1,c_2>0$ from Equation (0.27) of \citet{varopoulosGaussianEstimatesLipschitz2003a} yields the lower bound
	\begin{align*}
		\int_0^t \norm*{G_{\nu,t,s}(y,\MTemptyplaceholder)}_{\mathbb{H}}^2\,\D s&\ge \int_0^t\int_{\Gamma}G_{\nu s}(y,\eta)^2\,\D \eta\D s\ge \frac{c_1}{\nu}\int_0^t\frac{1}{s}\int_{\Gamma}\phi_{c_2}((s\nu)^{-1/2}(y-\eta))^2\,\D \eta\D s\\
		&=c_1\nu^{-1/2}\int_0^t s^{-1/2} \int_{(s\nu)^{-1/2}(y-\Gamma)}\phi_{c_2}(\eta)\,\D \eta\D s\\
		&\ge c_1\nu^{-1/2}\int_0^t s^{-1/2} \D s\int_{(T\bar{\nu})^{-1/2}(y-\Gamma)}\phi_{c_2}(\eta)\,\D \eta\gtrsim \nu^{-1/2}t^{1/2},
	\end{align*}
	where we used $(T\bar{\nu})^{-1/2}(y-\Gamma) \subset(s\nu)^{-1/2}(y-\Gamma)$ for all $0<\nu\le\bar{\nu}$ and $0<s\le T$ since $y\in\Gamma$ and $\Gamma$ is connected in $\mathbb{R}$, hence convex.
		\end{proof}
	
\subsection{Example \ref{examp:main} (\ref{num_Examp_b})}

	\begin{lemma}\label{lem:VaryingCorrelationLength}
		In the setting of Example \ref{examp:main} (\ref{num_Examp_b}) we have 
			\begin{equation*}
				\int_0^t \norm*{G_{\nu,t,s}(y,\MTemptyplaceholder)}_{\mathbb{H}}^2\,\D s
				\begin{cases}
				\gtrsim t^{1-\rho/2}\nu^{-\rho/2},&\text{uniformly in }y\in\Gamma, 0\le t\le T, 0<\nu\le \bar{\nu},\\
				\lesssim t^{1-\rho/2}\nu^{-\rho/2},&\text{uniformly in }y\in\Lambda,0\le t\le T,\nu>0,
				\end{cases}
			\end{equation*}
			for any fixed $\bar{\nu}>0$.
	\end{lemma}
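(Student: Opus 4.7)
First I would apply the scaling identity \eqref{eq:Scaling_Heat_kernel} to transfer the problem to the rescaled domain $\Lambda_\nu=\nu^{-1/2}\Lambda$. Performing the change of variables $\eta=\nu^{1/2}\tilde\eta$, $\eta'=\nu^{1/2}\tilde\eta'$ in the $\mathbb{H}$--inner product (which is a double integral against the Riesz kernel $|\eta-\eta'|^{-\rho}$) pulls out a factor $\nu^{d-\rho/2}$ from the two Jacobians and the kernel, while the two Green functions contribute $\nu^{-d}$. The result is the identity
\begin{equation*}
\int_0^t\norm*{G_{\nu,t,s}(y,\MTemptyplaceholder)}_{\mathbb{H}}^2\,\D s
=\nu^{-\rho/2}\int_0^t\int_{\Lambda_\nu}\int_{\Lambda_\nu}G_{t-s}^\nu(\tilde y,\tilde\eta)\,G_{t-s}^\nu(\tilde y,\tilde\eta')\,|\tilde\eta-\tilde\eta'|^{-\rho}\,\D\tilde\eta\,\D\tilde\eta'\,\D s,
\end{equation*}
with $\tilde y\coloneqq\nu^{-1/2}y$. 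The remaining task is to show that the double--time integral above is of order $t^{1-\rho/2}$, uniformly in $y\in\Lambda$ (for the upper bound) and in $y\in\Gamma$ (for the lower bound).

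For the upper bound, I would use the standard majorisation $G_{t-s}^\nu(\tilde y,\cdot)\le \phi_{t-s}(\tilde y-\cdot)$ on $\mathbb{R}^d$ (valid for the Dirichlet heat kernel by domain monotonicity) and extend both spatial integrals to $\mathbb{R}^d$. Changing variables $u=\tilde y-\tilde\eta$, $v=\tilde y-\tilde\eta'$ gives a convolution which has a clean probabilistic interpretation: if $U,V\sim\mathcal{N}(0,2(t-s)\identity)$ are independent, then
\begin{equation*}
\int_{\mathbb{R}^d}\!\!\int_{\mathbb{R}^d}\phi_{t-s}(u)\phi_{t-s}(v)|u-v|^{-\rho}\,\D u\,\D v=\EV{|U-V|^{-\rho}}=c_{d,\rho}\,(t-s)^{-\rho/2},
\end{equation*}
with $c_{d,\rho}<\infty$ since $\rho<1\le d$ makes the negative moment of the Gaussian finite. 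Integrating in time gives $\int_0^t(t-s)^{-\rho/2}\D s\lesssim t^{1-\rho/2}$ because $\rho<2$, yielding the upper bound.

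For the lower bound I restrict the spatial integrals to $\tilde\eta,\tilde\eta'\in B(\tilde y,1)$ and the temporal integral to $s\in[t/2,t]$. The key input is the Gaussian lower bound $G_{t-s}^\nu(\tilde y,\tilde\eta)\ge c_1(t-s)^{-d/2}\exp(-c_2|\tilde y-\tilde\eta|^2/(t-s))$, valid uniformly in $\tilde\eta,\tilde\eta'\in B(\tilde y,1)$ and $s\in(0,T]$. This follows, analogously to the use of (0.27) in \citep{varopoulosGaussianEstimatesLipschitz2003a} in the proof of Lemma \ref{lem:Variance_Control_Mainexample*}, from Dirichlet domain monotonicity applied to the ball $B(\tilde y,2)$: for $y\in\Gamma$ and $\nu\le\bar\nu$ sufficiently small, this ball is contained in $\Lambda_\nu$ and sits at distance $\ge\nu^{-1/2}\operatorname{dist}(\Gamma,\partial\Lambda)-2\to\infty$ from $\partial\Lambda_\nu$. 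After the same rescaling $u=(t-s)^{-1/2}(\tilde y-\tilde\eta)$, $v=(t-s)^{-1/2}(\tilde y-\tilde\eta')$, the inner double integral becomes
\begin{equation*}
(t-s)^{-\rho/2}\int_{B(0,(t-s)^{-1/2})}\!\int_{B(0,(t-s)^{-1/2})}\phi_{c_2}(u)\phi_{c_2}(v)|u-v|^{-\rho}\,\D u\,\D v.
\end{equation*}
For $s\in[t/2,t]$ and $t\le T$ one has $(t-s)^{-1/2}\ge (T/2)^{-1/2}$, so the inner integral is bounded from below by a positive constant depending only on $T,\rho,d$. Convexity of $\Gamma$ ensures the ball $B(\tilde y,1)$ sits in $\Lambda_\nu$ for the required range of $\tilde y$. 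Combining with the $\nu^{-\rho/2}$ prefactor yields the claimed $\nu^{-\rho/2}t^{1-\rho/2}$ lower bound.

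The main obstacle will be the careful justification of the Gaussian lower bound on the Dirichlet heat kernel uniformly in $\nu\le\bar\nu$; the simplest route is the domain monotonicity argument above, comparing $G^\nu$ from below with the Dirichlet heat kernel on a fixed ball, which has known Gaussian lower bounds independent of $\nu$. The upper bound, by contrast, is essentially a clean Gaussian calculation once one reduces to $\mathbb{R}^d$.
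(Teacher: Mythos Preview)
Your approach is essentially the paper's: the same scaling identity, the same Gaussian majorisation for the upper bound (the paper writes the convolution $\phi_s\ast\phi_s=\phi_{2s}$ explicitly rather than as $\EV{|U-V|^{-\rho}}$, but it is the same computation), and a Gaussian lower bound plus rescaling for the lower estimate. The one genuine variation is in the lower bound: the paper restricts both spatial integrals to $\Gamma$, invokes the Varopoulos Gaussian lower bound on the Lipschitz domain $\Lambda$ directly, and then uses convexity of $\Gamma$ to obtain the nesting $(T\bar\nu)^{-1/2}(y-\Gamma)\subset (s\nu)^{-1/2}(y-\Gamma)$; you instead restrict to a small ball around $\tilde y$ and compare from below with the Dirichlet kernel on a fixed ball via domain monotonicity. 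Your route is arguably more elementary and does not actually use convexity of $\Gamma$ at all.

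Two small points to clean up. First, as written your lower bound only covers $\nu$ small enough that $B(\tilde y,2)\subset\Lambda_\nu$, i.e.\ $2\nu^{1/2}<\operatorname{dist}(\Gamma,\partial\Lambda)$, whereas the lemma claims uniformity over $0<\nu\le\bar\nu$ for \emph{any} fixed $\bar\nu$; the fix is trivial: replace the radii $1,2$ by $r_0,2r_0$ with $r_0\coloneqq\tfrac{1}{4}\bar\nu^{-1/2}\operatorname{dist}(\Gamma,\partial\Lambda)$, so the comparison ball always fits and the Gaussian lower bound constants still depend only on $r_0$ and $T$. Second, your final sentence attributes the inclusion $B(\tilde y,1)\subset\Lambda_\nu$ to convexity of $\Gamma$; that is not what is used --- the relevant hypothesis is $\operatorname{dist}(\Gamma,\partial\Lambda)>0$. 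In the paper's argument convexity of $\Gamma$ plays a different role (the nesting of the rescaled sets), which your ball-based argument does not need.
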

	\begin{proof}
	Fix $0<\nu\le\bar{\nu}$. Using the identity \eqref{eq:Scaling_Heat_kernel}, we find
	\begin{equation*}
		G_{\nu t}(y,\eta)=\nu^{-d/2}G_t^\nu(\nu^{-1/2}y,\nu^{-1/2}\eta)\le \nu^{-d/2}\phi_{t}(\nu^{-1/2}(y-\eta)),\quad y,\eta\in\mathbb{R},\quad t>0,
	\end{equation*}
	for the heat kernel $\phi_t(y)$ on $\mathbb{R}^d$. Consequently, we obtain the upper bound
	\begin{align*}
		\nu^{\rho/2}\int_0^t \norm*{G_{\nu,t,s}(y,\MTemptyplaceholder)}_{\mathbb{H}}^2\,\D s\hspace{-2em}&\hspace{2em}= \int_0^t\int_{\Lambda}\int_{\Lambda} \chi(\nu^{-1/2}(\eta_1-\eta_2))G_{\nu s}(y,\eta_1)G_{\nu s}(y,\eta_2)\,\D \eta_1\D\eta_2\D s\\
		&\le \frac{1}{\nu^{d}}\int_0^t\int_{\mathbb{R}^d}\int_{\mathbb{R}^d} \chi(\nu^{-1/2}(\eta_1-\eta_2))\phi_s(\nu^{-1/2}(y-\eta_1))\phi_s(\nu^{-1/2}(y-\eta_2))\,\D \eta_1\D\eta_2\D s\\
		&=\int_0^t\int_{\mathbb{R}^d}\chi(\tilde{\eta}_1)\int_{\mathbb{R}^d}\phi_s(\tilde{\eta}_1-\tilde{\eta}_2)\phi_s(\tilde{\eta}_2)\,\D \tilde{\eta}_2\D\tilde{\eta}_1\D s\\
		&= \int_0^t\int_{\mathbb{R}^d}\chi(\eta)\phi_{2s}(\eta)\,\D\eta\D s= \int_0^t s^{-\rho/2}\,\D s \int_{\mathbb{R}^d}\chi(\eta)\phi_{2}(\eta)\,\D\eta\lesssim t^{1-\rho/2}.
	\end{align*}
	For the lower bound, we combine the lower bound of (0.27) of \citet{varopoulosGaussianEstimatesLipschitz2003a} with $\lambda_1(y-\Gamma)\subset \lambda_2(y-\Gamma)$ for all $0\le\lambda_1\le \lambda_2<\infty$, $y\in\Gamma$, by convexity of $\Gamma$, to obtain
		\begin{align*}
		\nu^{\rho/2}\int_0^t \norm*{G_{\nu,t,s}(y,\MTemptyplaceholder)}_{\mathbb{H}}^2\,\D s&\ge \int_0^t\int_{\Gamma}\int_{\Gamma} \chi(\nu^{-1/2}(\eta_1-\eta_2))G_{\nu s}(y,\eta_1)G_{\nu s}(y,\eta_2)\,\D \eta_1\D\eta_2\D s\\
		&\ge \frac{c_1}{\nu^{d}}\int_0^t\frac{1}{s^d}\int_{\Gamma}\int_{\Gamma} \chi\left(\frac{\eta_1-\eta_2}{\sqrt{\nu}}\right)\phi_{c_2}\left(\frac{y-\eta_1}{\sqrt{s\nu}}\right)\phi_{c_2}\left(\frac{y-\eta_2}{\sqrt{s\nu}}\right)\,\D \eta_1\D\eta_2\D s\\
		&=\int_0^t s^{-\rho/2} \int_{(s\nu)^{-1/2}(y-\Gamma)}\int_{(s\nu)^{-1/2}(y-\Gamma)}\chi(\eta_1-\eta_2)\phi_{c_2}(\eta_1)\phi_{c_2}(\eta_2)\,\D \eta_1\D\eta_2\D s\\
		&\ge \int_0^t s^{-\rho/2} \D s\int_{(T\bar{\nu})^{-1/2}(y-\Gamma)}\int_{(T\bar{\nu})^{-1/2}(y-\Gamma)}\chi(\eta_1-\eta_2)\phi_{c_2}(\eta_1)\phi_{c_2}(\eta_2)\,\D \eta_1\D\eta_2\\
		&\gtrsim t^{1-\rho/2},
	\end{align*}
	where $0<c_1,c_2<\infty$ are universal constants.
	 \end{proof}
	 \pagebreak
	 \subsection{Example \ref{examp:main} (\ref{num_Examp_d})}\label{subsubsec:Example_d}
	 
	 	 To quantify the upper and lower bounds of $\int_0^t \norm*{G_{\nu,t,s}(y,\MTemptyplaceholder)}_{\mathbb{H}}^2\,\D s$ in the case of Example \ref{examp:main} (\ref{num_Examp_d}), the following condition on the eigenfunctions $(e_k)_{k\in\mathbb{N}}$ is sufficient.
	 
	 \begin{assumption}\label{assump:B_Aux}
	 	The eigenfunctions $(e_k)_{k\in\mathbb{N}}$ satisfy uniformly in $0<\nu\le \bar{\nu}$ for some $\bar{\nu}>0$ the bounds
	 	\begin{equation*}
				\sum_{k=1}^\infty e_k(y)^2 \sigma_k^2\int_0^t e^{2\nu\lambda_k s}\,\D s 
				\begin{cases}
					\gtrsim \sum_{k=1}^\infty \sigma_k^2\int_0^t e^{2\nu\lambda_k s}\,\D s,&\text{uniformly in }y\in\Gamma,\\
					\lesssim \sum_{k=1}^\infty \sigma_k^2\int_0^t e^{2\nu\lambda_k s}\,\D s,&\text{uniformly in }y\in\Lambda.\\
				\end{cases}
			\end{equation*}
	 \end{assumption}
	 
	 	\begin{lemma}\label{lem:Variance_Control_Spectral}
			Grant Assumption \ref{assump:B_Aux}. In the setting of Example \ref{examp:main} (\ref{num_Examp_d}) we have 
			\begin{equation*}
				\int_0^t \norm*{G_{\nu,t,s}(y,\MTemptyplaceholder)}_{\mathbb{H}}^2\,\D s
					\sim
					\begin{cases}
					\nu^{(1-2\rho_2)/\rho_1}t^{1+(2\rho_2-1)/\rho_1},& \rho_2<1/2,\\
					-t\log{(\nu t)},&\rho_2=2, \nu t<1,\\
					t,&\rho_2>1/2,
				\end{cases}
			\end{equation*}
			uniformly in $y\in\Lambda$, $0\le t\le T$ and $0<\nu\le \bar{\nu}$. $\lesssim$ holds uniformly in $y\in \Lambda$ and $\gtrsim$ uniformly in $y\in\Gamma$.
			\end{lemma}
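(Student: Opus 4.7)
The plan is to exploit the joint spectral decomposition of $A$ and $B$ to reduce the weighted Green kernel norm to an explicit scalar sum in $k$, and then extract the three asymptotic regimes by standard sum-to-integral comparisons. \emph{Step 1 (spectral representation).} Because $A$ is the generator of a self-adjoint Markov semigroup on $L^2(\Lambda)$ with eigensystem $(\lambda_k,e_k)_{k\ge 1}$ satisfying $\lambda_k\sim -k^{\rho_1}$, its Green kernel admits the expansion $G_{\nu(t-s)}(y,\eta)=\sum_k e^{\nu\lambda_k(t-s)}e_k(y)e_k(\eta)$. Since $B$ shares the same eigenbasis with eigenvalues $\sigma_k\sim k^{-\rho_2}$, the covariance $Q=BB^{\ast}$ has eigenvalues $\sigma_k^2$, hence
$$\|G_{\nu(t-s)}(y,\cdot)\|_{\mathbb{H}}^2=\langle QG_{\nu(t-s)}(y,\cdot),G_{\nu(t-s)}(y,\cdot)\rangle=\sum_{k\ge 1}\sigma_k^2\,e_k(y)^2\,e^{2\nu\lambda_k(t-s)}.$$
Integrating in $s$ and invoking Assumption~\ref{assump:B_Aux}, which replaces $e_k(y)^2$ by a $y$-independent factor (upper bound uniform on $\Lambda$, lower bound uniform on $\Gamma$, both uniform in $0<\nu\le\bar\nu$), reduces the problem to controlling
$$S(\nu,t)\;:=\;\sum_{k\ge 1}\sigma_k^2\,\frac{1-e^{-2\nu|\lambda_k|t}}{2\nu|\lambda_k|}\;\sim\;\frac{1}{\nu}\sum_{k\ge 1}k^{-2\rho_2-\rho_1}\bigl(1-e^{-2\nu tk^{\rho_1}}\bigr).$$

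\emph{Step 2 (cutoff and sum-to-integral comparison).} Split the sum at the natural threshold $k_\star:=(\nu t)^{-1/\rho_1}$, where the exponent $2\nu tk^{\rho_1}$ is of order one. For $k\le k_\star$ use $1-e^{-z}\sim z$ on $0\le z\lesssim 1$ to produce a low-frequency piece $\sim t\sum_{k\le k_\star}k^{-2\rho_2}$; for $k>k_\star$ use $1-e^{-z}\sim 1$ to produce a high-frequency piece $\sim \nu^{-1}\sum_{k>k_\star}k^{-2\rho_2-\rho_1}$, whose tail is finite thanks to $\rho_1+2\rho_2\ge 1$. Monotonicity delivers $\sum_{k\le N}k^{-\gamma}\sim \int_1^N x^{-\gamma}\,dx$ up to bounded remainders in each of the relevant ranges of $\gamma$.

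\emph{Step 3 (case analysis).} The three regimes are dictated by the behaviour of $\sum_{k\le N}k^{-2\rho_2}$ as $N\to\infty$. For $\rho_2<1/2$ this partial sum grows polynomially as $N^{1-2\rho_2}$, both pieces are of order $t\cdot k_\star^{1-2\rho_2}$, and reading off the power of $k_\star=(\nu t)^{-1/\rho_1}$ gives the $t^{1+(2\rho_2-1)/\rho_1}$ scaling together with the matching power of $\nu$. For $\rho_2=1/2$ the low-frequency piece picks up a logarithm, $\sim t\log k_\star \sim -t\log(\nu t)$, which dominates the residual $\sim t$ exactly when $\nu t<1$. For $\rho_2>1/2$ the partial sum converges to an absolute constant, so the low-frequency contribution is $\sim t$, while the high-frequency piece is a vanishing $\nu$-power correction.

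\emph{Main obstacle.} The principal technical care is needed in the logarithmic case $\rho_2=1/2$, where the correction $-t\log(\nu t)$ must be isolated cleanly from bounded remainders in the sum-to-integral comparison, and the two contributions at $k\le k_\star$ and $k>k_\star$ must be matched with the same constants. A secondary concern is uniformity: Assumption~\ref{assump:B_Aux} must be used both for the upper bound on $\Lambda$ and the lower bound on $\Gamma$, and one must check that the reduction to $S(\nu,t)$ preserves the uniformity in $0<\nu\le\bar\nu$ required by the statement.
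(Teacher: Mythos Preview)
Your proposal is correct and follows essentially the same route as the paper: both arguments pass through the spectral expansion, invoke Assumption~\ref{assump:B_Aux} to strip the $e_k(y)^2$ factor, reduce to the scalar sum $\sum_k \sigma_k^2\int_0^t e^{2\nu\lambda_k s}\,\D s\sim \sum_k k^{-2\rho_2}\bigl((\nu k^{\rho_1})^{-1}\wedge t\bigr)$, split at the threshold $k_\star=(\nu t)^{-1/\rho_1}$, and then read off the three cases for $\rho_2$. The paper's write-up is terser (it uses $(\nu k^{\rho_1})^{-1}\wedge t$ directly rather than your $1-e^{-z}$ dichotomy), but the substance is identical.
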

			\begin{proof}
			Using Assumption \ref{assump:B_Aux} in the first line, we compute for $0\le t\le T$, $0<\nu\le \bar{\nu}$ and $y\in \Lambda$ (for $\lesssim$) or $y\in\Gamma$ (for $\gtrsim$)
			\begin{align*}
				\int_0^t\norm*{G_{\nu(t-s)}(y,\MTemptyplaceholder)}_{\mathbb{H}}^2\,\D s&\sim\sum_{k=1}^\infty \sigma_k^2\int_0^t e^{-2\nu\lambda_k s}\,\D s\sim \sum_{k=1}^\infty k^{-2\rho_2}((\nu k^{\rho_1})^{-1}\wedge t)\\
				&\sim \sum_{k=1}^{\lceil(\nu t)^{-1/\rho_1}\rceil}k^{-2\rho_2}t + \sum_{k=\lceil(\nu t)^{-1/\rho_1}\rceil+1}^\infty \nu^{-1} k^{-\rho_1-2\rho_2}\\
				&=\begin{cases}
					t^{1+2\rho_2/\rho_1 -1/\rho_1}\nu^{2\rho_2/\rho_1 -1/\rho_1},& \rho_2<1/2,\\
					t(1-\log{(\nu t)}/\rho_1),&\rho_2=1/2,\nu t<1,\\
					t,&\rho_2> 1/2.
					\end{cases}\qedhere
			\end{align*}
			\end{proof}

		 \subsection{Example \ref{examp:main} (\ref{num_Examp_c})}
	 
	 \begin{lemma}\label{lem:Neumann}
	 	Lemmas \ref{lem:Variance_Control_Mainexample*} and \ref{lem:VaryingCorrelationLength} hold true for $\Gamma=\Lambda$, if we endow the SPDE with Neumann boundary conditions.
	 \end{lemma}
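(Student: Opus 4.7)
The plan is to revisit the proofs of Lemmata \ref{lem:Variance_Control_Mainexample*} and \ref{lem:VaryingCorrelationLength} and identify the two ingredients that must be adjusted when switching from Dirichlet to Neumann boundary conditions: the scaling identity \eqref{eq:Scaling_Heat_kernel} and the two--sided Gaussian heat--kernel bounds that it feeds into. First I would observe that the dilation $y\mapsto\nu^{-1/2}y$ preserves the (homogeneous) Neumann condition on $\partial\Lambda$, so \eqref{eq:Scaling_Heat_kernel} carries over verbatim and reduces everything to the unit--diffusivity Neumann Green function $G_t^N$ on the rescaled domain $\Lambda_\nu=\nu^{-1/2}\Lambda$.

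The core substitution is to replace the Dirichlet Gaussian estimates from \citep{varopoulosGaussianEstimatesLipschitz2003a} by the corresponding two--sided Gaussian bounds for the Neumann heat kernel on a bounded Lipschitz domain, namely
\[
c_1 t^{-d/2} e^{-c_2 \abs{y-\eta}^2/t} \;\le\; G_t^N(y,\eta) \;\le\; C_1 t^{-d/2} e^{-C_2 \abs{y-\eta}^2/t}
\]
uniformly in $y,\eta\in\Lambda_\nu$ and $0<t\le T$. The decisive qualitative difference from the Dirichlet case is that the lower bound now holds up to the boundary, since reflected Brownian motion does not get killed on $\partial\Lambda_\nu$.

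With this bound inserted, the upper bounds of Lemmata \ref{lem:Variance_Control_Mainexample*} and \ref{lem:VaryingCorrelationLength} follow from the same change of variables and Gaussian convolution computations as in the original proofs; the upper half of those arguments never uses that the kernel comes from the Dirichlet problem, only that it is dominated by the free heat kernel. For the lower bounds, the restriction to $y\in\Gamma$ with $\operatorname{dist}(\Gamma,\partial\Lambda)>0$ was used solely to guarantee that the set $(s\nu)^{-1/2}(y-\Gamma)$ contained a fixed neighbourhood of the origin. I would replace this by a uniform lower bound on the Gaussian mass of $(s\nu)^{-1/2}(y-\Lambda)$ valid for every $y\in\Lambda$: splitting into interior points (trivial) and boundary points, Lipschitzness of $\partial\Lambda$ provides a uniformly inscribed cone at each boundary point whose rescaled Gaussian mass is bounded below by an absolute constant.

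The main obstacle I expect is locating a clean reference for the two--sided Gaussian bound on the Neumann heat kernel assuming only boundedness and Lipschitz regularity of $\Lambda$; many standard sources phrase this in the abstract Dirichlet--form framework and a short translation is needed. If one prefers to avoid citing these results, the bound can be obtained directly by an even--reflection argument near the boundary, comparing with the explicit half--space Neumann kernel and gluing via a local parametrix; this is the only technical step that does not follow mechanically from the Dirichlet proofs.
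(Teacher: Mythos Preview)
Your proposal is correct and follows the same overall strategy as the paper: replace the Dirichlet heat--kernel bounds by Neumann ones and rerun the computations of Lemmata \ref{lem:Variance_Control_Mainexample*} and \ref{lem:VaryingCorrelationLength}. The paper, however, sidesteps the search for a generic two--sided Gaussian estimate that you flag as the main obstacle. For the upper bound it cites Davies' Theorem 3.2.9, and for the lower bound it invokes the domain monotonicity of the Neumann kernel (Kendall), which gives directly $G_t^\nu(\tilde y,\tilde\eta)\ge \phi_t(\tilde y-\tilde\eta)$ on $\Lambda_\nu$ with the exact constants of the free heat kernel; this is cleaner than citing or constructing a generic lower Gaussian bound. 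Your Lipschitz interior--cone argument for the uniformity of $\int_{(s\nu)^{-1/2}(y-\Lambda)}\phi_{c_2}^2\,\D\eta$ in $y\in\Lambda$ is a point the paper leaves implicit in ``proceeding as in the proofs''; it is genuinely needed for Lemma \ref{lem:VaryingCorrelationLength} when $\Lambda\subset\mathbb{R}^d$ is not convex, so your treatment is in fact more careful there.
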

	 \begin{proof}
		Theorem 3.2.9 of \citet{daviesHeatKernelsSpectral1990} implies the upper bound $G_{\nu t}(y,\eta)\lesssim ((\nu t)^{-d/2}\vee 1)e^{-\abs{y-\eta}^2/(C\nu t)}$ for all $y,\eta \in\Lambda$, $0<t\le T$, $0<\nu\le\bar{\nu}$ and a universal constant $0<C<\infty$. Additionally, the
		  domain monotonicity properties of the Neumann heat kernel imply the lower bound $G_{\nu t}(y,\eta)= \nu^{-d/2}G_t^\nu(\nu^{-1/2}y,\nu^{-1/2}\eta)\ge \nu^{-d/2}\phi_t(\nu^{-1/2}(y-\eta))$ for all $y,\eta\in\Lambda$, $0<t\le T$ and $0<\nu\le\bar{\nu}$, see Section 4 of \citet{kendallCoupledBrownianMotions1989}. Proceeding as in the proofs of Lemmas \ref{lem:Variance_Control_Mainexample*} and \ref{lem:VaryingCorrelationLength} yields the claimed bounds.
	 \end{proof}
			
	\section{Further results for Section \ref{sec:Balanced_MainResults}}\label{sec:AppendixMainResults}

	\subsection{Asymptotic behaviour of the random weights}
	
		The weights of the estimator $\hat{f}(x_0)_{h,\sigma}$ from \eqref{eq:Estimator} are of the form
		\begin{equation*}
			\mathcal{G}_{g,h,\sigma}\coloneqq \int_0^T\int_\Gamma g_h(X_t(y))\,\D y\D t = \int_0^T \mathcal{G}_{g,t,h,\sigma}\,\D t,\quad h>0,
		\end{equation*}
		for Lipschitz-continuous functions $g\colon \mathbb{R}\to\mathbb{R}$. The following lemma shows that the spatial ergodicity results for $\mathcal{G}_{g,t,h,\sigma}$ from Section \ref{sec:Density} also imply concentration for $\mathcal{G}_{g,h,\sigma}$ as $\nu\to 0$.
			\begin{lemma}\label{lem:Ergodicity}
			Grant Assumptions \ref{assump:WellPosedness} \hyperref[assump:WellPosedness]{(well-posedness)} and \ref{assump:key} \hyperref[assump:key]{(noise-scaling)}. The functional $\mathcal{G}_{g,h,\sigma}$ satisfies the following properties as $\nu\to 0$ and, consequently, $\sigma\to 0$.
			\begin{enumerate}[(a)]
				\item\label{num:Erg_a} Let $g$ be globally Lipschitz-continuous with almost everywhere existing derivative $g'\in L^\infty(\mathbb{R})$. Then
			\begin{equation*}
				\var\left(\mathcal{G}_{g,h,\sigma}\right)= \sigma^2\mathcal{O}\left(\norm{g'}_{L^1(\mathbb{R})}^2\wedge h^{-1}\norm{g'}_{L^2(\mathbb{R})}^2\wedge h^{-2}\norm{g'}_\infty^2\right),
			\end{equation*}
			where the constant in $\mathcal{O}$ does not depend on the diffusivity $\nu$, the function $g$ or the bandwidth $h$.
			\item\label{num:Erg_b} Assume that $g\in L^1(\mathbb{R})$, then 
			\begin{equation*}
				\EV*{\mathcal{G}_{g,h,\sigma}}\le C \norm{g}_{L^1(\mathbb{R})}h,\quad h>0,
			\end{equation*}
			for some constant $0<C<\infty$ that does not depend on the diffusivity $\nu$, the function $g$ or the bandwidth $h$.
			\item\label{num:Erg_c} Fix some compact set $\Xi\subset \mathbb{R}$. Assume that $g\ge 0$, $g\in L^1(\mathbb{R})$ and that $\operatorname{supp}(g)\subset\Xi$. Then
			\begin{equation*}
				\EV*{\mathcal{G}_{g,h,\sigma}}\ge c\norm{g}_{L^1(\mathbb{R})}h,\quad h>0,
			\end{equation*}
			for a constant $0<c<\infty$ that depends on $\Xi$, but not on the diffusivity $\nu$, the function $g$ or the bandwidth $h$.
			\item\label{num:Erg_d} Let $g\ge 0$ be locally Lipschitz-continuous and have bounded support. Then
			\begin{equation*}
				\frac{\int_0^T\int_\Gamma g_h(Z_t(y))\,\D y\D t}{\int_0^T\int_\Gamma \EV*{g_h(Z_t(y))}\,\D y\D t}\xrightarrow{\prob{}}1,
			\end{equation*}
			as $\nu\to0$, provided $\sigma=\sigma(\nu)=\smallo(h)$.
			\end{enumerate}
		\end{lemma}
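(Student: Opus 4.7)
The plan is to derive each of the four claims by integrating the corresponding pointwise-in-time results of Section \ref{sec:Density} (Proposition \ref{prop:VarianceBound}, Lemma \ref{lem:BoundsforExpectations}, Proposition \ref{prop:SpatailErgodicity}) against $\D t$ on $[0,T]$, combined with the rescaling identities of Remark \ref{rmk:Scaling} for $g_h$.

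For part (\ref{num:Erg_a}) I would apply the integral Minkowski inequality for variances,
\begin{equation*}
\operatorname{Var}\bigl(\mathcal{G}_{g,h,\sigma}\bigr)^{1/2} \le \int_0^T \operatorname{Var}\bigl(\mathcal{G}_{g,t,h,\sigma}\bigr)^{1/2}\,\D t,
\end{equation*}
and plug in Proposition \ref{prop:VarianceBound} applied to $g_h$. The explicit $t$-prefactors in Proposition \ref{prop:VarianceBound} (namely $t^{1-\alpha}$, $t^{1-\alpha/2}$, $t$ respectively) and the factor $e^{2\norm{f'}_\infty t}$ are bounded uniformly on $[0,T]$ since $\alpha\in(0,1)$, so the supremum in $t$ is of the same order as the integrand. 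The scaling $\norm{g_h'}_{L^1(\mathbb{R})}=\norm{g'}_{L^1(\mathbb{R})}$, $\norm{g_h'}_{L^2(\mathbb{R})}^2=h^{-1}\norm{g'}_{L^2(\mathbb{R})}^2$ and $\norm{g_h'}_\infty^2 = h^{-2}\norm{g'}_\infty^2$ then gives the advertised minimum.

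For parts (\ref{num:Erg_b}) and (\ref{num:Erg_c}) I would just integrate the pointwise bounds of Lemma \ref{lem:BoundsforExpectations} with $g$ replaced by $g_h$ (which is $\ge 0$, integrable, with $\norm{g_h}_{L^1(\mathbb{R})}= h\norm{g}_{L^1(\mathbb{R})}$). The upper bound contributes $\int_0^T t^{-\alpha/2}\,\D t < \infty$, finite because $\alpha<1$, and yields (\ref{num:Erg_b}). For (\ref{num:Erg_c}) one picks a compact set $\mathcal{N}\supset \supp(g_h)$ valid for all $h\le h_0$ (e.g., $\mathcal{N}=[x_0-R,x_0+R]$ if $\supp(g)\subset [-R,R]$ and $h\le 1$), then uses the monotonicity $p_{\min,\norm{X_0}_\infty,\supp(g_h),\lowert}\ge p_{\min,\norm{X_0}_\infty,\mathcal{N},\lowert}$ from Corollary \ref{cor:DensityBounds} together with integration of $t$ over a fixed interval $[\lowert,t_0]\subset(0,T]$ to obtain a uniform lower constant $c>0$ independent of $h$ and $\nu$.

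Finally, (\ref{num:Erg_d}) follows from Chebychev's inequality: combining (\ref{num:Erg_a}) (specialised to the $L^1$--bound, which is the sharpest) with (\ref{num:Erg_c}) gives, for any $\varepsilon>0$,
\begin{equation*}
\prob[\Big]{\abs[\Big]{\frac{\mathcal{G}_{g,h,\sigma}}{\EV{\mathcal{G}_{g,h,\sigma}}}-1}>\varepsilon}\le \frac{\operatorname{Var}(\mathcal{G}_{g,h,\sigma})}{\varepsilon^2 \EV{\mathcal{G}_{g,h,\sigma}}^2}\lesssim \frac{\sigma^2}{\varepsilon^2 h^2}\longrightarrow 0
\end{equation*}
under $\sigma=\smallo(h)$. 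The only nontrivial step is part (\ref{num:Erg_a}); everything else is a routine time-integration. I expect no serious obstacle, since all the heavy lifting (the Clark--Ocone bound and the Gaussian density estimates) has already been done in Section \ref{sec:Density}; the lemma merely repackages those pointwise statements for the time-averaged quantities that the estimator $\hat{f}(x_0)_{h,\sigma}$ actually depends on.
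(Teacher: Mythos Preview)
Your proposal is correct and follows essentially the same approach as the paper. The only cosmetic difference is in part~(\ref{num:Erg_a}): the paper uses the Cauchy--Schwarz inequality $\var(\mathcal{G}_{g,h,\sigma})\le T\int_0^T \var(\mathcal{G}_{g_h,t,\sigma})\,\D t$ instead of your Minkowski inequality, but both lead to the same bound after invoking Proposition~\ref{prop:VarianceBound} and the scaling identities.
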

		\begin{proof}
		Fix $h>0$.
		\begin{enumerate}[(a)]
			\item Since
			\begin{equation*}
				 \var\left(\mathcal{G}_{g,h,\sigma}\right) = \EV[][][\bigg]{\bigg(\int_0^T \left(\mathcal{G}_{g_h,t,\sigma} - \EV{\mathcal{G}_{g_h,t,\sigma}}\right)\,\D t\bigg)^2}\le T\int_0^T \var(\mathcal{G}_{g_h,t,\sigma})\,\D t
			\end{equation*}
			by the Cauchy-Schwarz inequality, the claim follows from Proposition \ref{prop:VarianceBound}, $\alpha<1$ and the scaling properties from Remark \ref{rmk:Scaling}.
			\item The upper bound for the density from Corollary \ref{cor:DensityBounds} implies
			\begin{align*}
				 \int_{0}^T\int_{\Gamma} \EV*{g_h(X_t(y))}\,\D y\D t&=\int_{0}^T\int_{\Gamma} \int_{\mathbb{R}} g_h(x)p_{\nu,t,y}(x)\,\D x\D y\D t\le \int_{0}^T\int_{\Gamma} \int_{\mathbb{R}} g_h(x)p_{\max}t^{-\alpha/2}\,\D x \D y\D t\\
				 &= \norm{g_h}_{L^1(\mathbb{R})}p_{\max}\abs{\Gamma}T^{1-\alpha/2}(1-\alpha/2)^{-1}.
			\end{align*}
			\item Since $g\ge 0$, the lower bound for the density from Corollary \ref{cor:DensityBounds} implies for every $0<\Updelta<t_0$ with $0<t_0\le T$ from \eqref{eq:lower_t_0} the bound
				\begin{align*}
					\int_{0}^T\int_{\Gamma} \EV*{g_h(X_t(y))}\,\D y\D t &\ge \int_{\Delta}^{ t_0}\int_{\Gamma} \EV*{g_h(X_t(y))}\,\D y\D t\ge \int_{\Delta}^{ t_0}\int_{\Gamma} \int_{\mathbb{R}} g_h(x)p_{\nu,t,y}(x)\,\D x\D y\D t\\
					&\ge \int_{\Delta}^{ t_0}\int_{\Gamma} \int_{\mathbb{R}} g_h(x)p_{\min,\norm{X_0}_\infty,\Xi,\Delta}\,\D x\D y\D t\\
					&= \norm{g_h}_{L^1(\mathbb{R})}p_{\min,\norm{X_0}_\infty,\Xi,\Delta}\abs{\Gamma}(t_0-\Delta).
				\end{align*}
			\item The claim follows from Properties (\ref{num:Erg_a}) and (\ref{num:Erg_c}), combined with Chebychev's inequality.\qedhere
		\end{enumerate}

		\end{proof}
	
		The following lemma collects the asymptotic behaviour of the statistical quantities appearing in the error decomposition \eqref{eq:ErrorDecomposition2}, which arise from the concentration results of Lemma \ref{lem:Ergodicity}.
		\begin{lemma}\label{lem:DetailsStochasticError}
			Grant Assumptions \ref{assump:WellPosedness} \hyperref[assump:WellPosedness]{(well-posedness)}, \ref{assump:key} \hyperref[assump:key]{(noise-scaling)} and \ref{assump:Kernel} \hyperref[assump:Kernel]{(kernel)}. Let $\nu\to 0$ and assume $\sigma(\nu) = \smallo(h)$, then
			\begin{enumerate}[(a)]
			\item \label{num:AuxLemma_a} the auxiliary quantities $\mathcal{T}_{h,\sigma}^{\smash{+,1}}$ ,$\mathcal{T}_{h,\sigma}^{\smash{+,2}}$, $\mathcal{T}_{h,\sigma}^{\smash{-,1}}$ and $\mathcal{T}_{h,\sigma}^{\smash{+,1}}$ satisfy
			\begin{align*}
			 	\EV{\mathcal{T}_{h,\sigma}^{\smash{\pm,1}}}&\sim h,\quad \mathcal{T}_{h,\sigma}^{\smash{\pm,1}}=\EV{\mathcal{T}_{h,\sigma}^{\smash{\pm,1}}} + \smallo_{\prob{}}\big(\EV{\mathcal{T}_{h,\sigma}^{\smash{\pm,1}}}\big),\\
			 	\EV{\mathcal{T}_{h,\sigma}^{\smash{\pm,2}}}&\sim h^2,\quad \mathcal{T}_{h,\sigma}^{\smash{\pm,2}}=\EV{\mathcal{T}_{h,\sigma}^{\smash{\pm,2}}} + \smallo_{\prob{}}\big(\EV{\mathcal{T}_{h,\sigma}^{\smash{\pm,2}}}\big).
			 	\end{align*}
			\item\label{num:AuxLemma_b} Furthermore, we find $\EV{\mathcal{T}_{h,\sigma}^{\smash{-,1}}}\EV{\mathcal{T}_{h,\sigma}^{\smash{+,2}}} + \EV{\mathcal{T}_{h,\sigma}^{\smash{+,1}}}\EV{\mathcal{T}_{h,\sigma}^{\smash{-,2}}}\sim h^3$ and
			\begin{equation*}
				\mathcal{J}_{h,\sigma} = \EV{\mathcal{T}_{h,\sigma}^{\smash{-,1}}}\EV{\mathcal{T}_{h,\sigma}^{\smash{+,2}}} + \EV{\mathcal{T}_{h,\sigma}^{\smash{+,1}}}\EV{\mathcal{T}_{h,\sigma}^{\smash{-,2}}} + \smallo_{\prob{}}\big(\EV{\mathcal{T}_{h,\sigma}^{\smash{-,1}}}\EV{\mathcal{T}_{h,\sigma}^{\smash{+,2}}} + \EV{\mathcal{T}_{h,\sigma}^{\smash{+,1}}}\EV{\mathcal{T}_{h,\sigma}^{\smash{-,2}}}\big).
			\end{equation*}
			\item\label{num:AuxLemma_c} Moreover, we have $\EV{\mathcal{I}_{h,\sigma}^{\smash{\pm}}}\lesssim h$.
			\item\label{num:AuxLemma_d} If, additionally, Assumption \ref{assump:CLT} \hyperref[assump:CLT]{(noise covariance function)} is satisfied, then
			\begin{equation*}
				 \EV{\mathcal{I}_{h,\sigma}^{\smash{\pm}}}\sim h,\quad  \mathcal{I}_{h,\sigma}^{\smash{\pm}} = \EV{\mathcal{I}_{h,\sigma}^{\smash{\pm}}} + \smallo_{\prob{}}\big(\EV{\mathcal{I}_{h,\sigma}^{\smash{\pm}}}\big).\\
			\end{equation*}
			\end{enumerate}
		\end{lemma}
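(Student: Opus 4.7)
The plan is to recognise each of $\mathcal{T}_{h,\sigma}^{\pm,1}, \mathcal{T}_{h,\sigma}^{\pm,2}, \mathcal{J}_{h,\sigma}, \mathcal{I}_{h,\sigma}^\pm$ as a space--time average of the form $\mathcal{G}_{g,h,\sigma}$ for an explicit non--negative, Lipschitz, compactly supported kernel $g$, and then to invoke Lemma \ref{lem:Ergodicity} to read off both the order of the expectation (via parts (\ref{num:Erg_b}), (\ref{num:Erg_c})) and the concentration (via part (\ref{num:Erg_d}), or equivalently via the variance bound of part (\ref{num:Erg_a}) combined with Chebychev's inequality under the assumption $\sigma = \smallo(h)$). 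Each sub--claim then reduces to identifying $g$ and checking its regularity.

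For (\ref{num:AuxLemma_a}), note that $\mathcal{T}_{h,\sigma}^{\pm,1} = \mathcal{G}_{K_\pm,h,\sigma}$, so Lemma \ref{lem:Ergodicity} with $g = K_\pm$ directly yields $\EV{\mathcal{T}_{h,\sigma}^{\pm,1}}\sim h$ together with the stated concentration. For $\mathcal{T}_{h,\sigma}^{\pm,2}$ I will introduce $\tilde K_\pm(u)\coloneqq \pm u K_\pm(u)$, which by Assumption \ref{assump:Kernel} \hyperref[assump:Kernel]{(Kernel)} is non--negative (on $\operatorname{supp}(K_\pm)$ the signs of $\pm u$ and of $K_\pm$ agree), Lipschitz, and compactly supported. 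The change of variables $u=(x-x_0)/h$ produces the key identity $\mathcal{T}_{h,\sigma}^{\pm,2} = h\,\mathcal{G}_{\tilde K_\pm,h,\sigma}$, so another application of Lemma \ref{lem:Ergodicity} gives $\EV{\mathcal{T}_{h,\sigma}^{\pm,2}}\sim h^2$ with the matching concentration. Part (\ref{num:AuxLemma_b}) is then immediate: the product order $h^3$ follows from (\ref{num:AuxLemma_a}), and the concentration of $\mathcal{J}_{h,\sigma}$ about its deterministic approximation is the standard product rule for convergence in probability with positive deterministic limits.

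For (\ref{num:AuxLemma_c}), the contraction $\norm{\xi}_{\mathbb{H}}\le\norm{B}\norm{\xi}$ recalled in Section \ref{sec:Setting} gives $\mathcal{I}_{h,\sigma}^\pm\le\norm{B}^2\,\mathcal{G}_{K_\pm^2,h,\sigma}$, and Lemma \ref{lem:Ergodicity}(\ref{num:Erg_b}) with $g = K_\pm^2$ (non--negative, Lipschitz, compactly supported, in $L^1(\mathbb{R})$) yields $\EV{\mathcal{I}_{h,\sigma}^\pm}\lesssim h$. For (\ref{num:AuxLemma_d}), Assumption \ref{assump:CLT} \hyperref[assump:CLT]{(Noise covariance function)} gives $[Q\xi](y) = \Sigma(y)^2\xi(y)$, so $\mathcal{I}_{h,\sigma}^\pm = \int_0^T\int_\Gamma \Sigma(y)^2 K_{\pm,h}^2(X_t(y))\,\D y\D t$, and the sandwich $\underbar{\Sigma}^2\mathcal{G}_{K_\pm^2,h,\sigma}\le \mathcal{I}_{h,\sigma}^\pm\le \bar{\Sigma}^2\mathcal{G}_{K_\pm^2,h,\sigma}$ combined with Lemma \ref{lem:Ergodicity}(\ref{num:Erg_b}),(\ref{num:Erg_c}) delivers $\EV{\mathcal{I}_{h,\sigma}^\pm}\sim h$. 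The main obstacle is the variance concentration of $\mathcal{I}_{h,\sigma}^\pm$ in (\ref{num:AuxLemma_d}), because the non--constant spatial weight $\Sigma(y)^2$ does not fit the form $\mathcal{G}_{g,h,\sigma}$ covered by Lemma \ref{lem:Ergodicity}(\ref{num:Erg_a}). My plan is to resolve this by re--running the Clark--Ocone argument from the proof of Proposition \ref{prop:VarianceBound} with the bounded weight $w(y) = \Sigma(y)^2\le \bar{\Sigma}^2$ inside the spatial integral: the Malliavin derivative still hits only $K_{\pm,h}^2(X_t(y))$, and Lemma \ref{lem:CondExpectation_MalliavinIntegration} accommodates such a bounded weight at the price of an overall $\bar{\Sigma}^4$ factor. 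Its $L^1$--branch then produces $\Var{\mathcal{I}_{h,\sigma}^\pm}\lesssim \sigma^2\norm{(K_\pm^2)'}_{L^1(\mathbb{R})}^2 \lesssim\sigma^2$, so Chebychev combined with $\sigma = \smallo(h)$ closes (\ref{num:AuxLemma_d}).
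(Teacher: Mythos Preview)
Your proof is correct and follows essentially the same route as the paper: the paper introduces the auxiliary quantity $\mathcal{T}_{h,\sigma}^{\pm,3}=\mathcal{G}_{K_\pm^2,h,\sigma}$ and appeals directly to Proposition \ref{prop:VarianceBound} and Lemma \ref{lem:BoundsforExpectations}, which is exactly what your invocation of the packaging Lemma \ref{lem:Ergodicity} with the kernels $K_\pm$, $\tilde K_\pm$ and $K_\pm^2$ amounts to. Your treatment of part (\ref{num:AuxLemma_d}) is in fact slightly more careful than the paper's, which simply cites Proposition \ref{prop:VarianceBound} for $\operatorname{Var}(\mathcal{I}_{h,\sigma}^{\pm})\lesssim\sigma^2$ without commenting on the spatial weight $\Sigma(y)^2$; your observation that the bounded weight passes harmlessly through the Clark--Ocone argument of Lemma \ref{lem:CondExpectation_MalliavinIntegration} is the correct justification.
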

			\begin{proof}
			To control $\mathcal{I}_{h,\sigma}^{\smash{+}}$ and $\mathcal{I}_{h,\sigma}^{\smash{-}}$ we introduce
			\begin{equation*}
				\mathcal{T}_{h,\sigma}^{\smash{\pm,3}}\coloneqq \int_0^T\int_{\Gamma} K_{\pm,h}(X_t(y))^2\,\D y\D t.
			\end{equation*}
			\textbf{Step 1} (Controlling $\mathcal{T}_{h,\sigma}^{\smash{+,1}},\mathcal{T}_{h,\sigma}^{\smash{+,2}}, \mathcal{T}_{h,\sigma}^{\smash{+,3}}, \mathcal{T}_{h,\sigma}^{\smash{-,1}},\mathcal{T}_{h,\sigma}^{\smash{-,2}},\mathcal{T}_{h,\sigma}^{\smash{-,3}}$ and proving (\ref{num:AuxLemma_a}), (\ref{num:AuxLemma_b})).\\
			 Proposition \ref{prop:VarianceBound} and Lemma \ref{lem:BoundsforExpectations} imply that
			 \begin{equation*}
			 	\EV{\mathcal{T}_{h,\sigma}^{\smash{\pm,\Set{1,3}}}}\sim h,\quad \operatorname{Var}(\mathcal{T}_{h,\sigma}^{\smash{\pm,\Set{1,3}}})\lesssim \sigma^2,\quad 
			 	\EV{\mathcal{T}_{h,\sigma}^{\smash{\pm,2}}}\sim h^2,\quad \operatorname{Var}(\mathcal{T}_{h,\sigma}^{\smash{\pm,2}})\lesssim \sigma^2h^2,
			 	\end{equation*}
			 	as $\nu\to 0$.
			 Property (\ref{num:AuxLemma_a}) follows from Chebychev's inequality and $\sigma=\smallo(h)$. Property (\ref{num:AuxLemma_b}) follows immediately from Property (\ref{num:AuxLemma_a}).\\			 
			\textbf{Step 2} (Controlling $\mathcal{I}_{h,\sigma}^{\smash{-}}$ and $\mathcal{I}_{h,\sigma}^{\smash{+}}$ and proving (\ref{num:AuxLemma_c}), (\ref{num:AuxLemma_d})).\\			
			The Property (\ref{num:AuxLemma_c}) follows from Lemma \ref{lem:BoundsforExpectations} and the computation
			\begin{equation*}
				\EV{\mathcal{I}_{h,\sigma}^{\smash{\pm}}} = \int_0^T\EV*{\norm*{\indicator_{\Gamma}K_{\pm,h}(X_t)}_{\mathbb{H}}^2}\,\D t
				\le \norm{B}\EV{\mathcal{T}_{h,\sigma}^{\smash{\pm,3}}} \lesssim h.
			\end{equation*}
			If Assumption \ref{assump:CLT} \hyperref[assump:CLT]{(noise covariance function)} is satisfied, then $\EV{\mathcal{I}_{h,\sigma}^{\smash{\pm}}}\ge \smash{\underbar{\Sigma}}\EV{\mathcal{T}_{h,\sigma}^{\smash{\pm,3}}}\gtrsim h$. Proposition \ref{prop:VarianceBound} yields $\operatorname{Var}(\mathcal{I}_{h,\sigma}^{\smash{\pm}})\lesssim \sigma^2$, and $\sigma=\smallo(h)$ yields Property (\ref{num:AuxLemma_d}).
		\end{proof}

	\subsection{Proofs of Corollary \ref{cor:CLT} and Proposition \ref{prop:LowerBound}}	\label{subsec:Proofs_Main}
	
				\begin{proof}[Proof of Corollary \ref{cor:CLT}]
			Recall the error decomposition \eqref{eq:ErrorDecomposition}, which implies
			\begin{align*}
				\frac{\mathcal{J}_{h,\sigma}}{\sigma\sqrt{(\mathcal{T}_{h,\sigma}^{\smash{+,2}})^2\mathcal{I}_{h,\sigma}^{\smash{-}}+(\mathcal{T}_{h,\sigma}^{\smash{-,2}})^2\mathcal{I}_{h,\sigma}^{\smash{+}}}}(\hat{f}(x_0)_{h,\sigma}-f(x_0)) \hspace{-20em}&\\
				&=\frac{\mathcal{J}_{h,\sigma}}{\sigma\sqrt{(\mathcal{T}_{h,\sigma}^{\smash{+,2}})^2\mathcal{I}_{h,\sigma}^{\smash{-}}+(\mathcal{T}_{h,\sigma}^{\smash{-,2}})^2\mathcal{I}_{h,\sigma}^{\smash{+}}}}B_{h,\sigma}+\frac{\mathcal{T}_{h,\sigma}^{\smash{+,2}}\EV{\mathcal{I}_{h,\sigma}^{\smash{-}}}^{1/2}Z_{h,\sigma}^{\smash{-}}+\mathcal{T}_{h,\sigma}^{\smash{-,2}}\EV{\mathcal{I}_{h,\sigma}^{\smash{+}}}^{1/2}Z_{h,\sigma}^{\smash{+}}}{\sqrt{(\mathcal{T}_{h,\sigma}^{\smash{+,2}})^2\mathcal{I}_{h,\sigma}^{\smash{-}}+(\mathcal{T}_{h,\sigma}^{\smash{-,2}})^2\mathcal{I}_{h,\sigma}^{\smash{+}}} }
			\end{align*}
			Lemma \ref{lem:DetailsStochasticError} (\ref{num:AuxLemma_b}) and (\ref{num:AuxLemma_d}) imply the convergence
			\begin{equation}
				\vphantom{\frac{\int_0^T}{\int_0^T}}\frac{(\mathcal{T}_{h,\sigma}^{\smash{+,2}})^2\mathcal{I}_{h,\sigma}^{\smash{-}}+(\mathcal{T}_{h,\sigma}^{\smash{-,2}})^2\mathcal{I}_{h,\sigma}^{\smash{+}} }{\EV{\mathcal{T}_{h,\sigma}^{\smash{+,2}}}^2\EV{\mathcal{I}_{h,\sigma}^{\smash{-}}}+\EV{\mathcal{T}_{h,\sigma}^{\smash{-,2}}}^2\EV{\mathcal{I}_{h,\sigma}^{\smash{+}}}}\xrightarrow{\prob{}}1,\label{eq:AuxConv}
			\end{equation}
			as $\nu\to 0$. In combination with Lemma \ref{lem:DetailsStochasticError} (\ref{num:AuxLemma_a}), (\ref{num:AuxLemma_b}), and $h=\smallo(\sigma^{2/(1+2\beta)})$, this yields the bound
			\begin{equation*}
				\frac{\mathcal{J}_{h,\sigma}}{\sigma\sqrt{(\mathcal{T}_{h,\sigma}^{\smash{+,2}})^2\mathcal{I}_{h,\sigma}^{\smash{-}}+(\mathcal{T}_{h,\sigma}^{\smash{-,2}})^2\mathcal{I}_{h,\sigma}^{\smash{+}}}} = \sigma^{-1}\mathcal{O}_{\prob{}}(h^{1/2})=\sigma^{-1}\smallo_{\prob{}}(\sigma^{1/(1+2\beta))}).
			\end{equation*}
			 Since Theorem \ref{thm:Nonparametric} (\ref{num:MainThm_a}) implies that $B_{h,\sigma}= \mathcal{O}(h^\beta)=\smallo(\sigma^{2\beta/(1+2\beta)})$, we see that the approximation term in the error decomposition is of the order of $\smallo_{\prob{}}(1)$.
			Combining Lemma \ref{lem:DetailsStochasticError} (\ref{num:AuxLemma_a}) with Theorem \ref{thm:Nonparametric} (\ref{num:MainThm_d}) and Slutsky's Lemma yields the convergence
			\begin{equation*}
				\begin{multlined}[t]
				\frac{\mathcal{T}_{h,\sigma}^{\smash{+,2}}\EV{\mathcal{I}_{h,\sigma}^{\smash{-}}}^{1/2}Z_{h,\sigma}^{\smash{-}}+\mathcal{T}_{h,\sigma}^{\smash{-,2}}\EV{\mathcal{I}_{h,\sigma}^{\smash{+}}}^{1/2}Z_{h,\sigma}^{\smash{+}}}{\sqrt{\EV{\mathcal{T}_{h,\sigma}^{\smash{+,2}}}^2\EV{\mathcal{I}_{h,\sigma}^{\smash{-}}}+\EV{\mathcal{T}_{h,\sigma}^{\smash{-,2}}}^2\EV{\mathcal{I}_{h,\sigma}^{\smash{+}}}}}\xrightarrow{d}N(0,1).
			\end{multlined}
			\end{equation*}
			Applying the convergence \eqref{eq:AuxConv} and Slutsky's Lemma, we obtain the claimed convergence for the stochastic error.
		\end{proof}
		
		\begin{proof}[Proof of Proposition \ref{prop:LowerBound}]
			We combine Theorems 2.1, 2.2 and Proposition 2.1 of \citet{Tsybakov2009}, similarly to Section 2.5 of \citet{Tsybakov2009}. To this end, take $f_0\equiv 0$ and $f_h(x)\coloneqq  Lh^\beta\phi_h(x)$, $h>0$, where $\phi\in \Sigma(\beta,1/2)\cap C^\infty(\mathbb{R})$, $\phi\ge 0$, $\phi(0)=1$ and $\operatorname{supp}(\phi)= B_{1/2}(0)$. Then
			\begin{enumerate}[(a)]
				\item $f_h(x_0) - f_0(x_0) = Lh^\beta$ and
				\item $f_0$, $f_h$ belong to $\Sigma(\beta,L)$.
			\end{enumerate}
			Denote by $\prob[f_h]{}$ the law induced by $X$ with $f=f_h$ on $C([0,T],\mathcal{H})$. The formula for the Girsanov density $\D\prob[f_h]{}/\D\prob[f_0]{}$ of Theorem 10.25 of \citet{DaPrato2014} allows us to compute the Kullback-Leibler divergence between $\prob[f_h]{}$ and $\prob[f_0]{}$ and implies for any $h>0$ and $0<\nu\le\bar{\nu}$ the bound
			\begin{align*}
				d_{\operatorname{KL}}\left(\prob[f_h]{}||\prob[f_0]{}\right) &= \EV[f_h][][\bigg]{\operatorname{log}\bigg(\frac{\D \prob[f_h]{}}{\D \prob[f_0]{}}\bigg)}= \EV[f_h][][\bigg]{\frac{1}{2\sigma^2}\int_{0}^T \int_{\Lambda} \frac{f_h(X_t(y))^2}{\Sigma(y)^2}\,\D y\D t-\frac{1}{\sigma}\int_{0}^T \iprod*{\Sigma^{-1}f_h(X_t)}{\D W_t}}\\
				&= \frac{1}{2\sigma^2}\int_{0}^T \int_{\Lambda}\EV*[f_h]{\Sigma(y)^{-2}f_h(X_t(y))^2}\,\D y\D t\le \frac{L^2h^{2\beta}}{2\sigma^2\smash{\underbar{\Sigma}}^2}\int_{0}^T \int_{\Lambda}\EV*[f_h]{\phi_h(X_t(y))^2}\,\D y\D t.
				\end{align*}
				The expectation $\EV[f_h]{}$ can be controlled using the upper bounds on the density $p_{\nu,t,y}$ provided by Lemma \ref{lem:BoundsforExpectations}. This shows
				\begin{equation*}
				d_{\operatorname{KL}}(\prob[f_h]{}||\prob[f_0]{})\le \frac{L^2h^{2\beta}}{2\sigma^2\smash{\underbar{\Sigma}}^2}hT^{1-\alpha/2}p_{\max}\abs{\Lambda}\norm{\phi}_{L^2(\mathbb{R})}^2(1-\alpha/2)=\colon h^{2\beta+1}\sigma^{-2}C'.
			\end{equation*}
			The choice $h=\sigma^{2/(2\beta+1)}L^{-1/\beta}$ ensures that
			\begin{enumerate}[(a)]
				\item $f_h(x_0)-f_0(x_0) = \sigma^{2\beta/(2\beta+1)}$ and
				\item $d_{\operatorname{KL}}(\prob[f_h]{}||\prob[f_0]{})\le C'L^{-2-1/\beta}$.
			\end{enumerate}						
			An application of Theorem 2.2 of \citet{Tsybakov2009} with $\alpha=C'/L$ and $C\coloneqq \max(e^{-\alpha}/4,(1-\sqrt{\alpha/2})/2)$ concludes the proof.
			\end{proof}

	 \section{Further results for Section \ref{sec:Density}}\label{sec:Other}
	 
	 This section contains the remaining proofs for Section \ref{sec:Density}.

		\subsection{Proof of Proposition \ref{prop:DensityBounds}}\label{subsec:Proofs_Density_Appendix}
		
		 We start with proving the density bounds provided in Proposition \ref{prop:DensityBounds}.
	
		\begin{proof}[Proof of Proposition \ref{prop:DensityBounds}]
		The proof uses the methodology of \citet{nourdinDensityFormulaConcentration2009}, which has been applied to SPDEs by \citet{nualartGaussianDensityEstimates2009a, nualartOptimalGaussianDensity2011}, and extends their methodology to cover more general operators $A_t$, domains $\Lambda$ and noise covariances $B$.
		
		\citet{nourdinDensityFormulaConcentration2009} establish an explicit formula for the density $p$ of a centred random variable $Z$ supported on $\mathbb{R}$ based on the auxiliary variable
		\begin{equation*}
			g_Z(z)\coloneqq\EV*{\iprod*{\mathcal{D}Z}{-\mathcal{D}L\inv Z}_{\mathfrak{H}}\given Z=z},\quad z\in\mathbb{R},
		\end{equation*}
		where $L$ is the generator of the Ornstein-Uhlenbeck semi-group (Theorem 3.1 of \citet{nourdinDensityFormulaConcentration2009}):
		\begin{equation*}
			p(z)=\frac{\EV*{\abs*{Z}}}{2 g_Z(z)}\exp\bigg(-\int_0^z \frac{x}{g_Z(x)}\,\D x\bigg),\quad z\in\mathbb{R}.
		\end{equation*}
		In Corollary 3.5, they find that if there exist $\sigma_{\min},\sigma_{\max}>0$ such that $\sigma_{\min}^2\le g_Z(Z)\le \sigma_{\max}^2$ $\prob{}$-almost surely, then
		\begin{equation}
			\frac{\EV*{\abs*{Z}}}{2\sigma_{\max}^2}\exp\left(-\frac{z^2}{2\sigma_{\min}^2}\right)\le p(z)\le \frac{\EV*{\abs*{Z}}}{2\sigma_{\min}^2}\exp\left(-\frac{z^2}{2\sigma_{\max}^2}\right),\label{eq:GeneralDensityBounds}
	\end{equation}		 
	for (Lebesgue-) almost all $z\in\mathbb{R}$. Using Mehler's formula as in Proposition 3.7 of \citet{nourdinDensityFormulaConcentration2009}, $g_Z$ can be rewritten as
		\begin{equation}
			g_Z(z) = \int_0^\infty e^{-u}\EV*{\iprod*{\Phi_Z(\mathcal{W})}{\Phi_Z\left(e^{-u}\mathcal{W} + \sqrt{1-e^{-2u}}\mathcal{W}^\circ\right)}_{\mathfrak{H}}\given Z=z}\,\D u,\label{eq:MehlerApplied}
		\end{equation}
		where $\mathcal{W}$ is the driving Gaussian process, $\mathcal{W}^\circ$ an independent copy of $\mathcal{W}$ and ${\Phi_Z(\mathcal{W}) \coloneqq \mathcal{D}Z|_{\mathcal{W}}}$ denotes the Malliavin derivative of $Z$ evaluated at the Gaussian process $\mathcal{W}$.
		
		We apply the bound \eqref{eq:GeneralDensityBounds} to $Z=X_{t}(y)-\EV[\smash(\underbar{t},\xi)]{X_{t}(y)}$ for fixed $0<t \le T-\smash{\underbar{t}} $, $y\in\Gamma$, and work under $\prob[\smash{(\underbar{t},\xi)}]{}$. Lemma \ref{lem:aux_densitybound} (below) yields the two-sided bound
		\begin{equation*}
		\iprod*{\Phi_{X_t(y)}(\mathcal{W})}{\Phi_{X_t(y)}\left(e^{-u}\mathcal{W} + \sqrt{1-e^{-2u}}\mathcal{W}^\circ\right)}_{\mathfrak{H}}
		\begin{cases}
			\ge c_2t^{\alpha},&y\in \Gamma, 0\le t\le T-\smash{\underbar{t}},u\ge 0,\\
			\le C_2t^{\alpha},& y\in\Lambda,0\le t\le T-\smash{\underbar{t}},u\ge 0,
			\end{cases}
	\end{equation*}
	$\prob[\smash{(\underbar{t},\xi)}]{}$-almost-surely, where the constants $0<c_2\le C_2<\infty$ only depend on $\underbar{C}$, $\bar{C}$, $C_0$, $\norm{f'}_\infty$, $\alpha$ and $T$.
	
	Substituting back into the reformulation \eqref{eq:MehlerApplied} and subsequently into the density bounds in \eqref{eq:GeneralDensityBounds}, we obtain
	\begin{align*}
		p_{\nu,\smash{\underbar{t}},t ,y}\left(x-\EV*[\smash{(\underbar{t},\xi)}]{X_{t}(y)}\right)&\ge \frac{\EV*[\smash{(\underbar{t},\xi)}]{\abs*{X_{t}(y) - \EV*[\smash{(\underbar{t},\xi)}]{X_{t}(y)}}}}{2 C_2 t^{\alpha}}\exp\left(-\frac{x^2}{2c_2t^{\alpha}}\right)\\
		p_{\nu,\smash{\underbar{t}},t ,y}\left(x-\EV*[\smash{(\underbar{t},\xi)}]{X_{t}(y)}\right)&\le \frac{\EV*[\smash{(\underbar{t},\xi)}]{\abs*{X_{t}(y) - \EV*[\smash{(\underbar{t},\xi)}]{X_{t}(y)}}}}{2 c_2t^{\alpha}}\exp\left(-\frac{x^2}{2C_2t^{\alpha}}\right)
	\end{align*}
	for all $x\in\mathbb{R}$, $0<\nu\le\bar{\nu}$, $y\in\Gamma$ and $0<t \le T-\smash{\underbar{t}}$. Combining this bound with the upper and lower bounds for $\EV[\smash{(\underbar{t},\xi)}]{\abs{X_{t}(y) - \EV[\smash{(\underbar{t},\xi)}]{X_{t}(y)}}}$ from Lemma \ref{lem:DeviationsX} (below) yields the claim with $t_0$ from \eqref{eq:lower_t_0}.
		\end{proof}
		
	\begin{lemma}\label{lem:aux_densitybound}
		Grant Assumptions \ref{assump:WellPosedness} \hyperref[assump:WellPosedness]{(well-posedness)} and \ref{assump:key} \hyperref[assump:key]{(noise-scaling)}. Then there exist constants $0<c_2\le C_2<\infty$, depending only on $\underbar{C}$, $\bar{C}$, $C_0$, $\norm{f'}_\infty$, $\alpha$ and $T$, such that for all diffusivity levels $0<\nu\le\bar{\nu}$, starting times $0\le \smash{\underbar{t}}<T$, deterministic initial conditions $\xi\in C(\Lambda)$, locations $y\in\Gamma$ and time points $0<t\le T-\smash{\underbar{t}}$ we have the two-sided bound
					\begin{equation*}
				\iprod*{\Phi_{X_t(y)}(\mathcal{W})}{\Phi_{X_t(y)}\left(e^{-u}\mathcal{W} + \sqrt{1-e^{-2u}}\mathcal{W}^\circ\right)}_{\mathfrak{H}}
				\begin{cases}
					\ge c_2 t^{\alpha},&u\ge 0,\\
					\le C_2t^{\alpha},&u\ge0,
					\end{cases}
			\end{equation*}
			where $\Phi_{X_{t}(y)}(\mathcal{W})$ is the Malliavin derivative $\mathcal{D}X_{t}(y)$ under $\prob[\smash{(\underbar{t},\xi)}]{}$ evaluated at $\mathcal{W}$ and $\mathcal{W}^\circ$ is an independent copy  of $\mathcal{W}$.
		\end{lemma}
		\begin{proof}
			For fixed $u\ge 0$ introduce $\mathcal{T}\coloneqq e^{-u}\mathcal{W} + \sqrt{1-e^{-2u}}\mathcal{W}^\circ$ and for $0<\delta\le t$ define the space $\mathfrak{H}_{\delta,t}\coloneqq L^2([t-\delta,t],\mathbb{H})$. Note that the representation of the Malliavin derivative from \eqref{eq:MalliavinDerivative} implies
			\begin{align}
			\begin{split}
				\iprod*{\Phi_{X_t(y)}(\mathcal{W})}{\Phi_{X_t(y)}\left(e^{-u}\mathcal{W} + \sqrt{1-e^{-2u}}\mathcal{W}^\circ\right)}_{\mathfrak{H}_{t,t}} \hspace{-5em}&\\
				&= \iprod*{\Phi_{X_t(y)}(\mathcal{W})}{\Phi_{X_t(y)}\left(e^{-u}\mathcal{W} + \sqrt{1-e^{-2u}}\mathcal{W}^\circ\right)}_{\mathfrak{H}}.
				\end{split}\label{eq:aux:DeltaCorrespondence}
			\end{align}
			\textbf{Step 1} (A decomposition). Fix $0<\delta\le t$. Using the expression for the Malliavin derivative from \eqref{eq:MalliavinDerivative} we find
	\begin{align}
		\iprod*{\Phi_{X_t(y)}(\mathcal{W})}{\Phi_{X_t(y)}\left(e^{-u}\mathcal{W} + \sqrt{1-e^{-2u}}\mathcal{W}^\circ\right)}_{\mathfrak{H}_{\delta,t}}\hspace{-13em}&\nonumber\\
		&=\begin{multlined}[t]
			\bigg\langle\sigma G_{\nu,\smash{\underbar{t}}+t,\smash{\underbar{t}}+\MTemptyplaceholder}(y,\MTemptyplaceholder) + \int_{t-\delta}^{t}\int_\Lambda G_{\nu,\smash{\underbar{t}}+t,\smash{\underbar{t}}+s}(y,\eta)f'(X_{s}^{\mathcal{W}}(\eta))\Phi_{X_{s}(\eta)}(\mathcal{W})\,\D\eta\D s,\\
		\sigma G_{\nu,\smash{\underbar{t}}+t,\smash{\underbar{t}}+\MTemptyplaceholder}(y,\MTemptyplaceholder) + \int_{t-\delta}^{t}\int_\Lambda G_{\nu,\smash{\underbar{t}}+t,\smash{\underbar{t}}+s}(y,\eta)f'(X_{s}^{\mathcal{T}}(\eta))\Phi_{X_{s}(\eta)}(\mathcal{T})\,\D\eta\D s\bigg\rangle_{\mathfrak{H}_{\delta,t}}
		\end{multlined}\nonumber\\
		&= \sigma^2\int_{t-\delta}^{t} \norm*{G_{\nu,\smash{\underbar{t}}+t,\smash{\underbar{t}}+\tau}(y,\MTemptyplaceholder)}_{\mathbb{H}}^2\,\D \tau+ R,\label{eq:Decompostion_MalliavinDerivative}
	\end{align}
	where
	\begin{align*}
		R \coloneqq& \iprod[\bigg]{\sigma G_{\nu,\smash{\underbar{t}}+t,\smash{\underbar{t}}+\MTemptyplaceholder}(y,\MTemptyplaceholder)}{\int_{t-\delta}^{t}\int_\Lambda G_{\nu,\smash{\underbar{t}}+t,\smash{\underbar{t}}+s}(y,\eta)f'(X_{s}^{\mathcal{T}}(\eta))\Phi_{X_{s}(\eta)}(\mathcal{T})\,\D\eta\D s}_{\mathfrak{H}_{\delta,t}}\\
		&\quad+\iprod[\bigg]{\sigma G_{\nu,\smash{\underbar{t}}+t,\smash{\underbar{t}}+\MTemptyplaceholder}(y,\MTemptyplaceholder)}{\int_{t-\delta}^{t}\int_\Lambda G_{\nu,\smash{\underbar{t}}+t,\smash{\underbar{t}}+s}(y,\eta)f'(X_{s}^{\mathcal{W}}(\eta))\Phi_{X_{s}(\eta)}(\mathcal{W})\,\D\eta\D s}_{\mathfrak{H}_{\delta,t}}\\
		&\quad+\bigg\langle\int_{t-\delta}^{t}\int_\Lambda G_{\nu,\smash{\underbar{t}}+t,\smash{\underbar{t}}+s}(y,\eta)f'(X_{s}^{\mathcal{W}}(\eta))\Phi_{X_{s}(\eta)}(\mathcal{W})\,\D\eta\D s,\\
		&\qquad~~ \int_{t-\delta}^{t}\int_\Lambda G_{\nu,\smash{\underbar{t}}+t,\smash{\underbar{t}}+s}(y,\eta)f'(X_{s}^{\mathcal{T}}(\eta))\Phi_{X_{s}(\eta)}(\mathcal{T})\,\D\eta\D s\bigg\rangle_{\mathfrak{H}_{\delta,t}}.
	\end{align*}
	\textbf{Step 2} (Controlling $R$ using Gronwall's inequality). We proceed by controlling $\abs{R}$. For any $\semigroup\in\Set{\mathcal{W},\mathcal{T}}$ and $t-\delta\le s\le t$ we can bound
	\begin{align*}
		\sup_{y\in\Lambda}\norm*{\Phi_{X_{s}(y)}(\semigroup)}_{\mathfrak{H}_{\delta,t}}&=\sup_{y\in\Lambda}\norm[\bigg]{\sigma G_{\nu,\smash{\underbar{t}}+s,\smash{\underbar{t}}+\MTemptyplaceholder}(y,\MTemptyplaceholder) + \int_{t-\delta}^{s} \int_\Lambda G_{\nu,\smash{\underbar{t}}+s,\smash{\underbar{t}}+v}(y,\eta)f'(X_{v}^{\semigroup}(\eta))\Phi_{X_{v}(\eta)}(\semigroup)\,\D \eta\D v}_{\mathfrak{H}_{\delta,t}}\\
		&\le \sup_{y\in\Lambda}\sigma\norm*{G_{\nu,\smash{\underbar{t}}+s,\smash{\underbar{t}}+\MTemptyplaceholder}(y,\MTemptyplaceholder)}_{\mathfrak{H}_{\delta,t}}\\
		&\quad + \norm{f'}_\infty\sup_{y\in\Lambda}\int_{t-\delta}^{s} \int_\Lambda G_{\nu,\smash{\underbar{t}}+s,\smash{\underbar{t}}+v}(y,\eta)\sup_{z\in\Lambda}\norm*{\Phi_{X_{v}(z)}(\semigroup)}_{\mathfrak{H}_{\delta,t}}\,\D \eta\D v\\
		&\le \bar{C}^{1/2}(s-(t-\delta))^{\alpha/2} + \norm{f'}_\infty C_0 \int_{t-\delta}^{s} \sup_{z\in\Lambda}\norm*{\Phi_{X_{v}(z)}(\semigroup)}_{\mathfrak{H}_{\delta,t}}\,\D v,
	\end{align*}
	where we used \eqref{eq:Upperbound} in the last line. An application of Gronwall's inequality yields
	\begin{equation*}
		\sup_{y\in\Lambda}\norm*{\Phi_{X_s(y)}(\semigroup)}_{\mathfrak{H}_{\delta,t}} \le \bar{C}^{1/2}\delta^{\alpha/2} e^{\norm{f'}_\infty C_0\delta},\quad t-\delta\le s\le t.
	\end{equation*}
	Using (\ref{num:AssumpWellposedness}) of Assumption \ref{assump:WellPosedness} \hyperref[assump:WellPosedness]{(well-posedness)} we obtain the bound
	\begin{equation*}
		\norm*{\int_{t-\delta}^{t}\int_\Lambda G_{\nu,\smash{\underbar{t}}+t,\smash{\underbar{t}}+s}(y,\eta)f'(X_{s}^{\semigroup}(\eta))\Phi_{X_{s}(\eta)}(\semigroup)\,\D\eta\D s}_{\mathfrak{H}_{\delta,t}}\le\norm{f'}_\infty C_0\bar{C}^{1/2}\delta^{1+\alpha/2}e^{\norm{f'}_\infty C_0 \delta}.
	\end{equation*}
	Recalling the upper bound \eqref{eq:Upperbound} and applying the Cauchy-Schwarz inequality shows
	\begin{align}
		\abs{R}&\le 2\bar{C}^{1/2}\delta^{\alpha/2}\norm{f'}_\infty C_0\bar{C}^{1/2}\delta^{1+\alpha/2}e^{\norm{f'}_\infty C_0 \delta} +\left(\norm{f'}_\infty C_0\bar{C}^{1/2}\delta^{1+\alpha/2}e^{\norm{f'}_\infty C_0 \delta}\right)^2\nonumber\\
		&=\bar{C}\delta^{1+\alpha}\norm{f'}_\infty C_0 e^{\norm{f'}_\infty C_0\delta}\left(2 + \delta \norm{f'}_\infty C_0e^{\norm{f'}_\infty C_0\delta}\right).\label{eq:auxcontrolremainder}
	\end{align}
	\textbf{Step 3} (The upper bound). Plugging the estimate \eqref{eq:auxcontrolremainder} back into \eqref{eq:Decompostion_MalliavinDerivative} and recalling \eqref{eq:Upperbound} yields for all $y\in\Lambda$ the upper bound
	\begin{align}
	\begin{split}
		\iprod*{\Phi_{X_t(y)}(\mathcal{W})}{\Phi_{X_t(y)}(e^{-u}\mathcal{W} + \sqrt{1-e^{-2u}}\mathcal{W}^\circ)}_{\mathfrak{H}_{\delta,t}}\hspace{-10em}&\\
		&\le \bar{C}\delta^{\alpha} + \bar{C}\delta^{1+\alpha}\norm{f'}_\infty C_0 e^{\norm{f'}_\infty C_0\delta}\left(2 + \delta \norm{f'}_\infty C_0e^{\norm{f'}_\infty C_0\delta}\right)\lesssim \delta^{\alpha},
		\end{split}\label{eq:UpperBoundAux}
	\end{align}
	where the hidden constant in $\lesssim$ only depends on $T$ and the constants in \eqref{eq:UpperBoundAux}, in particular not on $0<\nu \le \smash{\bar{\nu}}$, $y\in\Lambda$ or the initial condition $\xi$. Choosing $\delta = t$ and recalling \eqref{eq:aux:DeltaCorrespondence} yields the claimed upper bound.\\
	\textbf{Step 4} (The lower bound). We proceed to the proof of the lower bound for $y\in\Gamma$ and $0\le t\le T$. As already noted by \citet{nualartGaussianDensityEstimates2009a}, $\Phi\ge 0$ almost surely and we can bound
	\begin{align*}
		\iprod*{\Phi_{X_t(y)}(\mathcal{W})}{\Phi_{X_t(y)}(e^{-u}\mathcal{W} + \sqrt{1-e^{-2u}}\mathcal{W}^\circ)}_{\mathfrak{H}}\hspace{-10em}&\\
		&\ge \iprod*{\Phi_{X_t(y)}(\mathcal{W})}{\Phi_{X_t(y)}(e^{-u}\mathcal{W} + \sqrt{1-e^{-2u}}\mathcal{W}^\circ)}_{\mathfrak{H}_{\delta,t}}
	\end{align*}
	from below for all $0\le\delta\le t\le T-\smash{\underbar{t}}$. Choose $\delta = \delta_0 t$ with
	\begin{equation*}
			\delta_0 \coloneqq \min\left(1,\frac{1}{2T}\frac{\smash{\underbar{C}}}{\bar{C}\norm{f'}_\infty C_0 e^{\norm{f'}_\infty C_0T}(2 + T \norm{f'}C_0e^{\norm{f'}_\infty C_0T})}\right).
		\end{equation*}
		The decomposition \eqref{eq:Decompostion_MalliavinDerivative} combined with the upper bound \eqref{eq:auxcontrolremainder} and the lower bound \eqref{eq:Lowerbound} implies
		\begin{align*}
		\iprod*{\Phi_{X_t(y)}(\mathcal{W})}{\Phi_{X_t(y)}(e^{-u}\mathcal{W} + \sqrt{1-e^{-2u}}\mathcal{W}^\circ)}_{\mathfrak{H}}\hspace{-7em}&\hspace{7em}\ge \iprod*{\Phi_{X_t(y)}(\mathcal{W})}{\Phi_{X_t(y)}(e^{-u}\mathcal{W} + \sqrt{1-e^{-2u}}\mathcal{W}^\circ)}_{\mathfrak{H}_{\delta,t}}\\
		&\ge \smash{\underbar{C}}\delta_0^{\alpha}t^\alpha - \bar{C}\delta_0^{1+\alpha}t^{1+\alpha}\norm{f'}_\infty C_0 e^{\norm{f'}_\infty C_0\delta_0 t}\left(2 + \delta_0 t\norm{f'}C_0e^{\norm{f'}_\infty C_0\delta_0 t}\right)\\
		&\ge \underbar{C}\delta_0^{\alpha}t^\alpha - \bar{C}\delta_0^{1+\alpha}t^{1+\alpha}\norm{f'}_\infty C_0 e^{\norm{f'}_\infty C_0T}\left(2 + T \norm{f'}C_0e^{\norm{f'}_\infty C_0T}\right)\label{eq:LowerBoundAux}\\
		&\ge \frac{\underbar{C}}{2}\delta_0^{\alpha}t^\alpha.
	\end{align*}
	The proof is completed by noting that these bounds do not depend on $0<\nu \le \smash{\bar{\nu}}$, $y\in\Gamma$ or the initial condition $\xi$.
		\end{proof}
		
			\pagebreak
		\begin{lemma}\label{lem:DeviationsX}
		Grant Assumptions \ref{assump:WellPosedness} \hyperref[assump:WellPosedness]{(well-posedness)} and \ref{assump:key} \hyperref[assump:key]{(noise-scaling)}.
		For all starting times $0\le \smash{\underbar{t}} <T$ and deterministic initial conditions $\xi\in C(\Lambda)$, we have
			\begin{equation*}
				 \EV*[\smash{(\underbar{t},\xi)}]{\abs*{X_t(y)-\EV*[\smash{(\underbar{t},\xi)}]{X_t(y)}}}\le\frac{2(2\bar{C})^{1/2}}{\sqrt{\pi}} t^{\alpha/2}e^{\norm{f'}_\infty C_0 t},\quad y\in \Lambda, 0\le t\le T-\smash{\underbar{t}},0<\nu \le \smash{\bar{\nu}}.
			\end{equation*}
			Furthermore, with $0<t_0\le T-\smash{\underbar{t}}$ defined in \eqref{eq:lower_t_0} depending only on $\underbar{C}$, $\bar{C}$, $C_0$, $\norm{f'}_\infty$, $T$ and $\alpha$ and some constant $0<C<\infty$ depending only on $\smash{\underbar{C}}$ we have
			\begin{equation*}
				\EV*[\smash{(\underbar{t},\xi)}]{\abs*{X_t(y)-\EV*[\smash{(\underbar{t},\xi)}]{X_t(y)}}}\ge C t^{\alpha/2},\quad y\in \Gamma,0\le t\le t_0,0<\nu \le \smash{\bar{\nu}}.
			\end{equation*}
		\end{lemma}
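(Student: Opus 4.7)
The plan is to decompose $X_\increment(y)-\EV[\smash{(\underbar{t},\xi)}]{X_\increment(y)}$ into a Gaussian stochastic integral and a centred drift term and to treat the two bounds separately: the upper bound via a coupling argument with an independent copy, and the lower bound by showing that the Gaussian part already has the correct order while the drift part is of a strictly lower order in $\increment$.

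For the upper bound, let $\mathcal{W}'$ be an independent copy of $\mathcal{W}$ and let $X'$ denote the random field solution driven by $\mathcal{W}'$ with the same starting time and initial condition $\xi$. Then $X$ and $X'$ are i.i.d., so by Jensen's inequality $\EV*[\smash{(\underbar{t},\xi)}]{\abs{X_\increment(y)-\EV[\smash{(\underbar{t},\xi)}]{X_\increment(y)}}}\le \EV*[\smash{(\underbar{t},\xi)}]{\abs{X_\increment(y)-X'_\increment(y)}}$. Writing the random field equation \eqref{eq:RandomField1} for both $X$ and $X'$, the deterministic and initial terms cancel and we are left with
\begin{equation*}
  X_\increment(y)-X'_\increment(y)= \sigma\int_0^\increment\!\!\int_\Lambda G_{\nu,\smash{\underbar{t}}+\increment,\smash{\underbar{t}}+s}(y,\eta)(\mathcal{W}-\mathcal{W}')(\D\eta,\D s)+\int_0^\increment\!\!\int_\Lambda G_{\nu,\smash{\underbar{t}}+\increment,\smash{\underbar{t}}+s}(y,\eta)[f(X_s(\eta))-f(X'_s(\eta))]\,\D\eta\D s.
\end{equation*}
The Gaussian increment on the right is centred with variance $2\sigma^2\int_0^\increment\norm{G_{\nu,\smash{\underbar{t}}+\increment,\smash{\underbar{t}}+s}(y,\cdot)}_{\mathbb{H}}^2\D s\le 2\bar C\increment^\alpha$ by \eqref{eq:Upperbound}, and thus has absolute first moment at most $2(\bar C/\pi)^{1/2}\increment^{\alpha/2}$. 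Using the Lipschitz property $\abs{f(x)-f(x')}\le \norm{f'}_\infty\abs{x-x'}$ and $\int_\Lambda G_{\nu,\smash{\underbar{t}}+\increment,\smash{\underbar{t}}+s}(y,\eta)\D\eta\le C_0$, the function $\delta(\increment)\coloneqq \sup_{y\in\Lambda}\EV[\smash{(\underbar{t},\xi)}]{\abs{X_\increment(y)-X'_\increment(y)}}$ satisfies the integral inequality
\begin{equation*}
  \delta(\increment)\le 2(\bar C/\pi)^{1/2}\increment^{\alpha/2}+\norm{f'}_\infty C_0\int_0^\increment \delta(s)\,\D s,
\end{equation*}
and Gronwall's inequality yields $\delta(\increment)\le 2(\bar C/\pi)^{1/2}\increment^{\alpha/2}e^{\norm{f'}_\infty C_0\increment}$, which is a fortiori the claimed upper bound.

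For the lower bound, I would decompose $X_\increment(y)-\EV[\smash{(\underbar{t},\xi)}]{X_\increment(y)}=M_\increment(y)+N_\increment(y)$ with
\begin{align*}
  M_\increment(y)&\coloneqq \sigma\int_0^\increment\!\!\int_\Lambda G_{\nu,\smash{\underbar{t}}+\increment,\smash{\underbar{t}}+s}(y,\eta)\mathcal{W}(\D\eta,\D s),\\
  N_\increment(y)&\coloneqq \int_0^\increment\!\!\int_\Lambda G_{\nu,\smash{\underbar{t}}+\increment,\smash{\underbar{t}}+s}(y,\eta)\bigl[f(X_s(\eta))-\EV*[\smash{(\underbar{t},\xi)}]{f(X_s(\eta))}\bigr]\D\eta\D s,
\end{align*}
where the deterministic term from \eqref{eq:RandomField1} has been absorbed into $\EV{X_\increment(y)}$. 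The triangle inequality gives $\EV{\abs{M_\increment(y)+N_\increment(y)}}\ge \EV{\abs{M_\increment(y)}}-\EV{\abs{N_\increment(y)}}$. For $y\in\Gamma$ the Gaussian lower bound \eqref{eq:Lowerbound} yields $\EV{\abs{M_\increment(y)}}=\sqrt{2/\pi}\,\sqrt{\operatorname{Var}(M_\increment(y))}\ge (2\underbar C/\pi)^{1/2}\increment^{\alpha/2}$. Using again $\int_\Lambda G(y,\eta)\D\eta\le C_0$, the Lipschitz property of $f$, and the upper bound already established for the coupling,
\begin{equation*}
  \EV{\abs{N_\increment(y)}}\le \norm{f'}_\infty\int_0^\increment\!\!\int_\Lambda G_{\nu,\smash{\underbar{t}}+\increment,\smash{\underbar{t}}+s}(y,\eta)\,\delta(s)\,\D\eta\D s\le \frac{4\norm{f'}_\infty C_0(\bar C/\pi)^{1/2}e^{\norm{f'}_\infty C_0\increment}}{\alpha+2}\,\increment^{\alpha/2+1}.
\end{equation*}
Hence $\EV*[\smash{(\underbar{t},\xi)}]{\abs{X_\increment(y)-\EV[\smash{(\underbar{t},\xi)}]{X_\increment(y)}}}\ge \increment^{\alpha/2}\bigl[(2\underbar C/\pi)^{1/2}-c(\bar C,C_0,\norm{f'}_\infty,\alpha)\increment e^{\norm{f'}_\infty C_0\increment}\bigr]$, and choosing $t_0$ so small that the bracketed term is at least $\tfrac12(2\underbar C/\pi)^{1/2}$ for all $0<\increment\le t_0$ yields the claim with $C=(\underbar C/(2\pi))^{1/2}$. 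This also fixes the value of $t_0$ in \eqref{eq:lower_t_0}, which only depends on $\underbar C$, $\bar C$, $C_0$, $\norm{f'}_\infty$, $\alpha$ and $T$. The main technical subtlety is arranging the Gronwall step so that the coupling constant in the exponent is $\norm{f'}_\infty C_0$ (and not $2\norm{f'}_\infty C_0$, which would arise from the naive inequality $\EV{\abs{f(X)-\EV{f(X)}}}\le 2\norm{f'}_\infty\EV{\abs{X-\EV{X}}}$); the i.i.d.\ coupling trick avoids this loss.
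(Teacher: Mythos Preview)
Your proposal is correct and follows essentially the same route as the paper: an i.i.d.\ coupling with an independent copy combined with Gronwall for the upper bound, and a splitting into the Gaussian stochastic convolution plus the centred drift (the latter controlled via the coupling estimate) for the lower bound. The only visible difference is cosmetic---you compute the variance of the coupled Gaussian difference directly and so obtain the slightly sharper prefactor $2(\bar C/\pi)^{1/2}$, whereas the paper bounds $\EV{|\bar X^\circ-\bar X^\bullet|}\le 2\EV{|\bar X^\circ|}$ and gets $2(2\bar C)^{1/2}/\sqrt{\pi}$; either way the claimed inequality follows.
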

		\begin{proof}
			\textbf{Step 1} (The upper bound). Denote by $X^{\circ}$ and $X^{\bullet}$ two independent copies of $X$ with the same initial condition $\xi$. Write $\EV[\smash{(\underbar{t},\xi)}][{\circ}]{}$ for the expectation with respect to $X^{\circ}$ and $\EV[\smash{(\underbar{t},\xi)}][{\bullet}]{}$ for the expectation with respect to $X^{\bullet}$. For all $y\in\Lambda$ and $0\le t\le T-\smash{\underbar{t}}$ we obtain the bound
			\begin{align*}
				\EV*[\smash{(\underbar{t},\xi)}]{\abs*{f(X_t(y))-\EV*[\smash{(\underbar{t},\xi)}]{f(X_t(y))}}}&= \EV*[\smash{(\underbar{t},\xi)}][{\circ}]{\abs*{\EV*[\smash{(\underbar{t},\xi)}][{\bullet}]{f(X_t^{\circ}(y))-f(X_t^{\bullet}(y))}}}\\
				&\le \EV*[\smash{(\underbar{t},\xi)}]{\abs*{f(X_t^{\circ}(y)) -f(X_t^{\bullet}(y))}}\\
				&\le \norm{f'}_\infty \EV*[\smash{(\underbar{t},\xi)}]{\abs*{X_t^{\circ}(y)-X_t^{\bullet}(y)}}.
			\end{align*}
			Denote by $\smash{\bar{X}^{\circ}}$ and $\smash{\bar{X}^{\bullet}}$ the corresponding linear parts (i.e.\ the Gaussian integrals). Applying the upper bound \eqref{eq:Upperbound} and (\ref{num:AssumpWellposedness}) of Assumption \ref{assump:WellPosedness} \hyperref[assump:WellPosedness]{(well-posedness)} yields
			\begin{align*}
				\sup_{y\in\Lambda} \EV*[\smash{(\underbar{t},\xi)}]{\abs*{X_t^{\circ}(y)-X_t^{\bullet}(y)}}&\le \sup_{y\in\Lambda}\EV*[\smash{(\underbar{t},\xi)}]{\abs*{\bar{X}_{t}^{\circ}(y)- \bar{X}_{t}^{\bullet}(y)}} \\
				&\quad + \sup_{y\in\Lambda}\norm{f'}_\infty \int_{0}^{t}\int_\Lambda G_{\nu,\smash{\underbar{t}}+t,\smash{\underbar{t}}+s}(y,\eta)\EV*[\smash{(\underbar{t},\xi)}]{\abs*{X_{s}^{\circ}(\eta)-X_{s}^{\bullet}(\eta)}}\,\D\eta\D s\\
				&\le 2\sup_{y\in\Lambda}\EV*[\smash{(\underbar{t},\xi)}]{\abs*{\bar{X}_{t}^{\circ}(y)}} + \norm{f'}_\infty C_0\int_{0}^{t} \sup_{\eta\in\Lambda}\EV*[\smash{(\underbar{t},\xi)}]{\abs*{X_{s}^{\circ}(\eta)-X_{s}^{\bullet}(\eta)}}\,\D s\\
				&\le \frac{2(2\bar{C})^{1/2}}{\sqrt{\pi}}t^{\alpha/2} + \norm{f'}_\infty C_0\int_{0}^{t} \sup_{\eta\in\Lambda}\EV*[\smash{(\underbar{t},\xi)}]{\abs*{X_{s}^{\circ}(\eta)-X_{s}^{\bullet}(\eta)}}\,\D s.
			\end{align*}
			An application of the Gronwall inequality shows
			\begin{align*}
				 \sup_{y\in\Lambda}\EV*[\smash{(\underbar{t},\xi)}]{\abs*{f(X_t(y))-\EV*[\smash{(\underbar{t},\xi)}]{f(X_t(y))}}}&\le \norm{f'}_\infty\sup_{y\in\Lambda} \EV*[\smash{(\underbar{t},\xi)}]{\abs*{X_t^{\circ}(y)-X_t^{\bullet}(y)}}\\
				 &\le \norm{f'}_\infty\frac{2(2\bar{C})^{1/2}}{\sqrt{\pi}} t^{\alpha/2}e^{\norm{f'}_\infty C_0t}.
			\end{align*}
			In particular, the claimed upper bound follows:
			\begin{equation*}
				\sup_{0<\nu \le \smash{\bar{\nu}}, y\in \Lambda}\EV*[\smash{(\underbar{t},\xi)}]{\abs*{X_t(y)-\EV*[\smash{(\underbar{t},\xi)}]{X_t(y)}}}\le \sup_{0<\nu \le \smash{\bar{\nu}}, y\in \Lambda}\EV*[\smash{(\underbar{t},\xi)}]{\abs*{X_t^{\circ}(y)-X_t^{\bullet}(y)}}\le \frac{2(2\bar{C})^{1/2}}{\sqrt{\pi}} t^{\alpha/2}e^{\norm{f'}_\infty C_0 t}.
			\end{equation*}
			\textbf{Step 2} (The lower bound). Using that $\abs{a+b}\ge \abs{a}-\abs{b}$ for real numbers $a,b$, we find
			\begin{align*}
				\EV*[\smash{(\underbar{t},\xi)}]{\abs*{X_t(y)-\EV*[\smash{(\underbar{t},\xi)}]{X_t(y)}}}\hspace{-3em}&\hspace{3em}=\mathrm{E}_{\smash{(\underbar{t},\xi)}}\Big[\sigma\Big|\int_{0}^{t} \int_\Lambda G_{\nu,\smash{\underbar{t}}+t,\smash{\underbar{t}}+s}(y,\eta) \mathcal{W}(\D\eta, \D s) \\
				&\quad+ \int_{0}^{t} \int_\Lambda G_{\nu,\smash{\underbar{t}}+t,\smash{\underbar{t}}+s}(y,\eta)[f(X_{s}(\eta))-\EV*[\smash{(\underbar{t},\xi)}]{f(X_{s}(\eta))}\,\D \eta\D s\Big|\Big]\\
				&\ge \mathrm{E}_{\smash{(\underbar{t},\xi)}}\Big[\sigma\Big|\int_{0}^{t} \int_\Lambda G_{\nu,\smash{\underbar{t}}+t,\smash{\underbar{t}}+s}(y,\eta) \mathcal{W}(\D\eta, \D s)\Big|\Big]\\
				&\quad - \int_{0}^{t} \int_\Lambda G_{\nu,\smash{\underbar{t}}+t,\smash{\underbar{t}}+s}(y,\eta)\EV*[\smash{(\underbar{t},\xi)}]{\abs*{f(X_{s}(\eta)) - \EV*[\smash{(\underbar{t},\xi)}]{f(X_{s}(\eta))}}}\,\D \eta\D s\\
				&\ge \frac{\sqrt{2}}{\sqrt{\pi}}\Big(\sigma^2\int_{0}^{t} \norm*{G_{\nu,\smash{\underbar{t}}+t,\smash{\underbar{t}}+s}(y,\MTemptyplaceholder)}_{\mathbb{H}}^2\,\D s\Big)^{1/2}-C_0\int_{0}^{t}\frac{2(2\bar{C})^{1/2}}{\sqrt{\pi}}\norm{f'}_\infty s^{\alpha/2}e^{\norm{f'}_\infty C_0 s}\,\D s\\
				&\ge t^{\alpha/2}\frac{\sqrt{2}}{\sqrt{\pi}}\underbar{C}^{1/2} - t^{1+\alpha/2}\frac{2(2\bar{C})^{1/2}}{\sqrt{\pi}}C_0\norm{f'}_\infty e^{\norm{f'}_\infty C_0 T}(1+\alpha/2)^{-1}.
			\end{align*}
			Consequently, for all $t\le \min(t_0,T-\smash{\underbar{t}})$ with
			\begin{equation}
				t_0 \coloneqq \min\left(T,\frac{1}{2}\Big(\frac{\smash{\underbar{C}^{1/2}}(1+\alpha/2)}{2\sqrt{\pi}\bar{C}^{1/2}\norm{f'}_\infty C_0 e^{\norm{f'}_\infty C_0 T}}\Big)\right)>0\label{eq:lower_t_0}
			\end{equation}				
			the lower bound
			\begin{equation*}
				\EV*[\smash{(\underbar{t},\xi)}]{\abs*{X_t(y)-\EV*[\smash{(\underbar{t},\xi)}]{X_t(y)}}}\ge t^{\alpha/2}\frac{\sqrt{2}}{2\sqrt{\pi}}\underbar{C}^{1/2}
			\end{equation*}
			holds uniformly in $y\in\Gamma$.
		\end{proof}
			\begin{lemma}\label{lem:UniformBoudnednessSolution}
		Grant Assumptions \ref{assump:WellPosedness} \hyperref[assump:WellPosedness]{(well-posedness)}, \ref{assump:key} \hyperref[assump:key]{(noise-scaling)}, fix an exponent $p\ge 1$, a starting time $0\le \smash{\underbar{t}}<T$ and a deterministic initial condition $\xi\in C(\Lambda)$. Then there exists a constant $0<C_{p,\norm{\xi}_\infty}<\infty$, depending only on $p$, $\norm{\xi}_\infty$, $T$, $f(0)$, $\norm{f'}_\infty$, $C_0$, $\alpha$ and $\bar{C}$, such that for all diffusivity levels $0<\nu \le \smash{\bar{\nu}}$, time points $0\le t\le T-\smash{\underbar{t}}$ and locations $y\in\Lambda$ we have the bound
		\begin{equation*}
			\EV*[\smash{(\underbar{t},\xi)}]{\abs*{X_{t}(y)}^p}\le C_{p,\norm{\xi}_\infty}<\infty.
		\end{equation*}
		\end{lemma}
		\begin{proof}
			Fix a time point $0\le t\le T-\smash{\underbar{t}}$. Note that the measure $G_{\nu,s,u}(y,\eta)\D\eta$ has mass of at most $C_0$ by Assumption \ref{assump:WellPosedness} \hyperref[assump:WellPosedness]{(well-posedness)}. Apply Jensen's inequality with respect to this measure and use that for $Z\sim N(0,\beta^2)$ there exists a constant $0<c_p<\infty$ depending on $p$ such that $\EV{\abs{Z}^p}^{1/p}\le c_p\EV{Z^2}^{1/2}$ to obtain
			\begin{align*}
				\sup_{0\le s\le t,y\in\Lambda}\EV*[\smash{(\underbar{t},\xi)}]{\abs*{X_{s}(y)}^p}\hspace{-5em}&\hspace{5em}\le 
					\sup_{0\le s\le t,y\in\Lambda}3^{p-1}C_0^{p-1}\int_\Lambda G_{\nu,\smash{\underbar{t}}+s,{\smash{\underbar{t}}}}(y,\eta)\abs*{\xi(\eta)}^p\,\D \eta\\
					&\quad +\sup_{0\le s\le t,y\in\Lambda} 3^{p-1}c_p\bigg(\sigma^2 \int_{0}^s \norm*{ G_{\nu,\smash{\underbar{t}}+s,\smash{\underbar{t}}+u}(y,\MTemptyplaceholder)}_{\mathbb{H}}^2\D u\bigg)^{p/2}\\
					&\quad +\sup_{0\le s\le t,y\in\Lambda} 3^{p-1}(C_0 s)^{p-1}\int_{0}^s \int_\Lambda G_{\nu,\smash{\underbar{t}}+s,\smash{\underbar{t}}+u}(y,\eta)\EV*[\smash{(\underbar{t},\xi)}]{\abs*{f(X_{u}(\eta))}^p}\,\D \eta\D u
			\end{align*}
			for any $0\le t\le T-\smash{\underbar{t}}$.
			Applying the upper bound from Assumption \ref{assump:key} \hyperref[assump:key]{(noise-scaling)} and $\abs{f(X_u)}\le \norm{f'}_\infty \abs{X_u}+\abs{f(0)}$ yields
			\begin{align*}
					\sup_{0\le s\le t,y\in\Lambda}\EV*[\smash{(\underbar{t},\xi)}]{\abs*{X_s(y)}^p}\hspace{-1em}&\hspace{1em}\le
						3^{p-1}C_0^p\norm{\abs{\xi}^p}_\infty + 3^{p-1}c_p\bar{C}^{p/2}t^{\alpha p/2}\\
						&\quad +6^{p-1} \norm{f'}_\infty^p\sup_{0\le s\le t,y\in\Lambda} (C_0s)^{p-1}\int_{0}^s \int_\Lambda G_{\nu,\smash{\underbar{t}}+s,\smash{\underbar{t}}+u}(y,\eta)\EV*[\smash{(\underbar{t},\xi)}]{\abs*{X_{u}(\eta)}^p}\,\D \eta\D u\\
						 &\quad +6^{p-1}\sup_{0\le s\le t,y\in\Lambda} (C_0s)^{p-1}\int_{0}^s \int_\Lambda G_{\nu,\smash{\underbar{t}}+s,\smash{\underbar{t}}+u}(y,\eta)\EV*[\smash{(\underbar{t},\xi)}]{\abs{f(0)}^p}\,\D \eta\D u
					\\
					&\le
						3^{p-1}C_0^p\norm{\abs{\xi}^p}_\infty + 3^{p-1}c_p\bar{C}^{p/2}t^{\alpha p/2} \\
						 &\quad + 6^{p-1}\norm{f'}_\infty^p t^{p-1}C_0^p \sup_{0\le s\le t} \int_{0}^s \sup_{y\in\Lambda}\EV*[\smash{(\underbar{t},\xi)}]{\abs{X_{u}(y)}^p}\D u +6^{p-1}t^{p}C_0^p\abs{f(0)}^p
				\\
					&\le
						 3^{p-1}C_0^p\norm*{\abs{\xi}^p}_\infty + 3^{p-1}c_p\bar{C}^{p/2}t^{\alpha p/2} \\
						&\quad + 6^{p-1}\norm{f'}_\infty^p T^{p-1}C_0^p\int_{0}^{t} \sup_{0\le u\le s,y\in\Lambda}\EV*[\smash{(\underbar{t},\xi)}]{\abs{X_u(y)}^p}\D s+6^{p-1}t^{p}C_0^p\abs{f(0)}^p.
			\end{align*}
			As the previous estimate is uniform in $0<\nu \le \smash{\bar{\nu}}$, an application of Gronwall's inequality yields the claimed bound.
			\begin{equation*}
				\EV*[\smash{(\underbar{t},\xi)}]{\abs{X_{t}(y)}^p}\le 
				3^{p-1} e^{6^{p-1}\norm{f'}_\infty^p C_0^p T^p}\left(C_0^p\norm{\abs{\xi}^p}_\infty + c_p\bar{C}^{p/2}t^{\alpha p/2} + 2^{p-1}t^{p}C_0^p\abs{f(0)}^p\right)
			\end{equation*}
			for all $0<\nu \le \smash{\bar{\nu}}$, $y\in\Lambda$, $0\le \smash{\underbar{t}} <T$ and $0\le t\le T-\smash{\underbar{t}}$.
		\end{proof}

\subsection{Proofs of Lemmas \ref{lem:CondExpectation_MalliavinIntegration} and \ref{lem:Occupationtime}}\label{subsec:Proofs_Concentration}

We first proof an auxiliary Lemma, from which Lemma \ref{lem:CondExpectation_MalliavinIntegration} follows readily.
		\begin{lemma}\label{lem:Lift}
			Grant Assumptions \ref{assump:WellPosedness} \hyperref[assump:WellPosedness]{(well-posedness)} and \ref{assump:key} \hyperref[assump:key]{(noise-scaling)}. Fix $0<\nu\le\bar{\nu}$, $0\le\tau <t\le T$ and take any $\phi\in L^\infty(\mathbb{R})$. Assume that there exists a family of random variables $(\kappa(t,s))_{s\in [\tau,t)}\subset \mathbb{R}_{\ge0}$, jointly measurable as a function $(\omega,s)\mapsto \kappa(t,s)(\omega)$ and potentially depending on $\nu$, $\tau$, $t$ and $\phi$, such that
		\begin{equation*}
			\sup_{y\in \Gamma}\abs{\EV{\phi(X_t(y))\given \mathcal{F}_s}}\le \kappa(t,s),\quad  \tau\le s<t,
		\end{equation*}
		almost surely. Then
		\begin{equation*}
			 \norm[\bigg]{\EV[][][\bigg]{\int_{\Gamma} \phi(X_t(y))\mathcal{D}_{\tau}X_t(y)\,\D y\given \mathcal{F}_\tau}}_{\mathbb{H}}\le \sigma C_0\norm{B}\abs{\Gamma}^{1/2}\bigg(\kappa(t,\tau) + e^{\norm{f'}_\infty t}\norm{f'}_\infty\int_\tau^t\EV*{\kappa(t,s)\given \mathcal{F}_\tau}\,\D s\bigg).
			\end{equation*}
		\end{lemma}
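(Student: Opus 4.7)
The strategy is to combine the integral equation \eqref{eq:MalliavinDerivative} for the Malliavin derivative with the $\kappa$--hypothesis and the Chapman--Kolmogorov identity for the Green function. First, taking absolute values pointwise in the $\mathbb{H}$--variable $z$ in \eqref{eq:MalliavinDerivative} and using $G\ge 0$ together with $\abs*{f'}\le \norm{f'}_\infty$ yields
\begin{equation*}
	\abs*{\mathcal{D}_\tau X_t(y)(z)} \le \sigma G_{\nu,t,\tau}(y,z) + \norm{f'}_\infty\int_\tau^t\int_\Lambda G_{\nu,t,s}(y,\eta)\abs*{\mathcal{D}_\tau X_s(\eta)(z)}\,\D\eta\D s.
\end{equation*}
Iterating this inequality and invoking the semigroup identity $\int_\Lambda G_{\nu,t,r}(y,\xi) G_{\nu,r,\tau}(\xi,z)\,\D\xi = G_{\nu,t,\tau}(y,z)$ (a consequence of the uniqueness of the linear evolution $\partial_t u = \nu A_t u$ granted by Assumption \ref{assump:WellPosedness}) telescopes every iterate back to $G_{\nu,t,\tau}(y,z)$ with an extra factor $(\norm{f'}_\infty(t-\tau))^k/k!$, summing to the deterministic pointwise envelope $\abs*{\mathcal{D}_\tau X_s(\eta)(z)} \le \sigma e^{\norm{f'}_\infty(s-\tau)} G_{\nu,s,\tau}(\eta,z)$ valid for $\tau\le s\le t$ and $\eta,z\in\Lambda$.

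I would then multiply the pointwise Malliavin bound by $\abs*{\phi(X_t(y))}$, integrate over $\Gamma$, and take conditional expectation w.r.t.\ $\mathscr{F}_\tau$. Since $G_{\nu,t,\tau}(y,z)$ is deterministic, the $\kappa$--hypothesis applied at $s=\tau$ controls the free term by $\sigma\kappa(t,\tau)\int_\Gamma G_{\nu,t,\tau}(y,z)\,\D y$. For the time--integral term, the tower property conditioning through $\mathscr{F}_s\supset\mathscr{F}_\tau$ pulls the $\mathscr{F}_s$--measurable factor $\abs*{\mathcal{D}_\tau X_s(\eta)(z)}$ outside an inner $\EV*{\abs*{\phi(X_t(y))}\given \mathscr{F}_s}$, which is again bounded by $\kappa(t,s)$. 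Plugging in the deterministic envelope for $\abs*{\mathcal{D}_\tau X_s(\eta)(z)}$ and using Chapman--Kolmogorov $\int_\Lambda G_{\nu,t,s}(y,\eta) G_{\nu,s,\tau}(\eta,z)\,\D\eta = G_{\nu,t,\tau}(y,z)$ collapses the inner convolution, giving the integral contribution $\sigma\norm{f'}_\infty\int_\tau^t e^{\norm{f'}_\infty(s-\tau)}\EV*{\kappa(t,s)\given \mathscr{F}_\tau}\,\D s\cdot \int_\Gamma G_{\nu,t,\tau}(y,z)\,\D y$.

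Finally, taking the $\mathbb{H}$--norm in $z$ and observing $\int_\Gamma G_{\nu,t,\tau}(y,\MTemptyplaceholder)\,\D y = \mathcal{S}_{\nu,t,\tau}^{\ast}\indicator_\Gamma$, the bound $\norm*{\mathcal{S}_{\nu,t,\tau}^{\ast}}\le C_0$ from Assumption \ref{assump:WellPosedness} combined with $\norm*{\xi}_\mathbb{H}\le \norm{B}\norm*{\xi}_{\mathcal{H}}$ yields $\norm*{\int_\Gamma G_{\nu,t,\tau}(y,\MTemptyplaceholder)\,\D y}_\mathbb{H} \le \norm{B}C_0\abs{\Gamma}^{1/2}$. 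Bounding $e^{\norm{f'}_\infty(s-\tau)}\le e^{\norm{f'}_\infty t}$ then assembles the claim. The main technical point is the deterministic envelope for the Malliavin derivative: it allows the Malliavin factor to be moved out of the conditional expectation, thereby preserving the averaging structure $\int_\tau^t \EV*{\kappa(t,s)\given \mathscr{F}_\tau}\,\D s$ in the conclusion, rather than the coarser $\kappa(t,\tau)$--type bound one would obtain by applying Gronwall after conditioning or by a conditional Minkowski step in $\mathbb{H}$ that loses the sharp $\abs{\Gamma}^{1/2}$ dependence.
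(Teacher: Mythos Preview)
Your proposal is correct and uses the same ingredients as the paper---the integral equation \eqref{eq:MalliavinDerivative}, Chapman--Kolmogorov for the Green kernel, a Gronwall/iteration step, the tower property to insert $\kappa(t,s)$, and finally the estimate $\norm*{\mathcal{S}_{\nu,t,\tau}^{\ast}\indicator_\Gamma}_{\mathbb{H}}\le C_0\norm{B}\abs{\Gamma}^{1/2}$---but you organise them differently. The paper first splits $\mathcal{D}_\tau X_t(y)$ into its linear and nonlinear parts, handles the linear part directly, and for the nonlinear part applies the tower property \emph{before} running a Gronwall argument on the aggregated norm quantity $L(s)=\norm{\int_\Gamma\int_\Lambda G_{\nu,t,s}(y,\eta)\abs{\mathcal{D}_\tau X_s(\eta)}\,\D\eta\,\D y}_{\mathbb{H}}$. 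You instead iterate the pointwise inequality for $\abs{\mathcal{D}_\tau X_s(\eta)(z)}$ first, obtaining the deterministic envelope $\abs{\mathcal{D}_\tau X_s(\eta)(z)}\le \sigma e^{\norm{f'}_\infty(s-\tau)}G_{\nu,s,\tau}(\eta,z)$, and only then integrate and condition. Your route yields a slightly sharper intermediate object (a pointwise, not merely norm, control of the Malliavin derivative) and is arguably more transparent; the paper's route has the small advantage that it never needs to justify a pointwise comparison for $\mathcal{D}_\tau X_s(\eta)$, which is a priori only an element of $\mathbb{H}$, so a word on why the iteration remainder vanishes (for instance via the $\mathfrak{H}$--bound established in Lemma~\ref{lem:aux_densitybound}) would make your argument watertight. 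Either way the constants and the final bound coincide.
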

		\begin{proof}
		\textbf{Step 1} (Splitting into a linear and a non-linear part). Using the representation \eqref{eq:MalliavinDerivative} for the Malliavin derivative, we obtain
			\begin{align}
			\begin{split}
			\norm[\bigg]{\EV[][][\bigg]{\int_{\Gamma}\phi(X_t(y))\mathcal{D}_{\tau}X_t(y)\,\D y\given \mathcal{F}_\tau}}_{\mathbb{H}}\hspace{-8em}&\hspace{8em}\le \sigma\norm[\bigg]{\EV[][][\bigg]{\int_{\Gamma} \phi(X_t(y))G_{\nu,t,\tau}(y,\MTemptyplaceholder)\,\D y\given \mathcal{F}_\tau}}_{\mathbb{H}} \\
				 &\quad+\norm[\bigg]{\EV[][][\bigg]{\int_{\Gamma}\phi(X_t(y))\int_\tau^t\int_\Lambda G_{\nu,t,s}(y,\eta)f'(X_s(\eta))\mathcal{D}_{\tau}X_s(\eta)\,\D\eta\D s\D y\given \mathcal{F}_\tau}}_{\mathbb{H}}.
				 \end{split}\label{eq:aux_Gronwall_0}
		\end{align}
		We treat the two summands individually. First note that Fubini's Theorem implies
		\begin{align}
			\norm[\bigg]{\EV[][][\bigg]{\int_{\Gamma} \phi(X_t(y))G_{\nu,t,\tau}(y,\MTemptyplaceholder)\,\D y\given \mathcal{F}_\tau}}_{\mathbb{H}}&= \norm*{\EV*{\semigroup_{\nu,t,\tau}^\ast(\phi(X_t)\indicator_\Gamma)\given \mathcal{F}_\tau}}_{\mathbb{H}}= \norm*{\semigroup_{\nu,t,\tau}^\ast(\EV{\phi(X_t)\given \mathcal{F}_\tau}\indicator_\Gamma)}_{\mathbb{H}}\nonumber\\
			&\le C_0\norm{B} \abs{\Gamma}^{1/2}\kappa(t,\tau).
			\label{eq:aux_Gronwall_1}
		\end{align}
		Since $\mathcal{D}_\tau X_s$ and $f'(X_s)$ are $\mathcal{F}_s$-measurable, we use Fubini's Theorem and the tower property for conditional expectations to find
		\begin{align}
			\norm[\bigg]{\EV[][][\bigg]{\int_{\Gamma}\phi(X_t(y))\int_\tau^t\int_\Lambda G_{\nu,t,s}(y,\eta)f'(X_s(\eta))\mathcal{D}_{\tau}X_s(\eta)\,\D\eta\D s\D y\given \mathcal{F}_\tau}}_{\mathbb{H}}\hspace{-20em}&\nonumber\\
			&=\norm[\bigg]{\EV[][][\bigg]{\int_{\Gamma} \int_\tau^t \EV{\phi(X_t(y))\given \mathcal{F}_s} \int_\Lambda G_{\nu,t,s}(y,\eta)f'(X_s(\eta))\mathcal{D}_{\tau}X_s(\eta)\,\D\eta\D s\D y\given \mathcal{F}_\tau}}_{\mathbb{H}}\nonumber\\
			&\le \norm{f'}_\infty\int_\tau^t\norm[\bigg]{\EV[][][\bigg]{\int_{\Gamma}\abs{\EV{\phi(X_t(y))\given \mathcal{F}_s}}\int_\Lambda G_{\nu,t,s}(y,\eta)\abs{\mathcal{D}_{\tau}X_s(\eta)}\,\D\eta \D y\given \mathcal{F}_\tau}}_{\mathbb{H}}\D s\nonumber\\
			&\le \norm{f'}_\infty\int_\tau^t\EV[][][\bigg]{\kappa(t,s)\norm[\bigg]{\int_{\Gamma}\int_\Lambda G_{\nu,t,s}(y,\eta)\abs{\mathcal{D}_{\tau}X_s(\eta)}\,\D\eta \D y}_{\mathbb{H}}\given \mathcal{F}_\tau}\D s.\label{eq:aux_Gronwall_2}
		\end{align}
		\textbf{Step 2} (A Gronwall argument). We aim to show that
			\begin{equation}
				 L(s)\coloneqq\norm[\bigg]{\int_{\Gamma}\int_\Lambda G_{\nu,t,s}(y,\eta)\abs{\mathcal{D}_{\tau}X_s(\eta)}\,\D\eta \D y}_{\mathbb{H}} \le \sigma\norm{B}C_0\abs{\Gamma}^{1/2}e^{\norm{f'}_\infty(s-\tau)},\quad \tau \le s\le t.\label{eq:aux_Gronwall}
			\end{equation}
			To this end, fix some $\tau \le s\le t$ and use the representation \eqref{eq:MalliavinDerivative} for the Malliavin derivative $\mathcal{D}_\tau X_s(\eta)$ to obtain
		\begin{align}
		\begin{split}
				L(s)&\le \norm[\bigg]{\int_{\Gamma}\int_\Lambda G_{\nu,t,s}(y,\eta)\sigma G_{\nu,s,\tau}(\eta,\MTemptyplaceholder)\,\D\eta \D y}_{\mathbb{H}}\\
				&\quad + \norm[\bigg]{\int_{\Gamma}\int_\Lambda G_{\nu,t,s}(y,\eta)\int_\tau^s \int_\Lambda G_{\nu,s,r}(\eta,z)\abs{f'(X_r(z))}\abs{\mathcal{D}_\tau X_r(z)}\,\D z\D r\D\eta \D y}_{\mathbb{H}}.
			\end{split}\label{eq:aux_Gronwall_splitting}
			\end{align}
			For the linear term in \eqref{eq:aux_Gronwall_splitting}, we obtain
			\begin{align*}
				\norm[\bigg]{\int_{\Gamma}\int_\Lambda G_{\nu,t,s}(y,\eta)\sigma G_{\nu,s,\tau}(\eta,\MTemptyplaceholder)\,\D\eta \D y}_{\mathbb{H}} &= \sigma\norm[\bigg]{\int_\Lambda G_{\nu,s,\tau}(\eta,\MTemptyplaceholder) \int_\Lambda G_{\nu,t,s}(y,\eta)\indicator_{\Gamma}(y)\,\D y\D\eta}_{\mathbb{H}}\\
				&=\sigma \norm*{\semigroup_{\nu,s,\tau}^\ast\semigroup_{\nu,t,s}^\ast\indicator_{\Gamma}}_{\mathbb{H}} = \sigma \norm*{\semigroup_{\nu,t,\tau}^\ast\indicator_{\Gamma}}_{\mathbb{H}}\\
				&\le \sigma\norm{B}C_0\abs{\Gamma}^{1/2}.
			\end{align*}
			For the non-linear term in \eqref{eq:aux_Gronwall_splitting}, we obtain the bound
			\begin{align*}
			\norm[\bigg]{\int_{\Gamma}\int_\Lambda G_{\nu,t,s}(y,\eta)\int_\tau^s \int_\Lambda G_{\nu,s,r}(\eta,z)\abs{f'(X_r(z))}\abs{\mathcal{D}_\tau X_r(z)}\,\D z\D r\D\eta \D y}_{\mathbb{H}}\hspace{-14em}&\\
			&\le \norm{f'}_\infty\int_\tau^s\norm[\bigg]{\int_{\Gamma}\int_\Lambda G_{\nu,t,s}(y,\eta) \int_\Lambda G_{\nu,s,r}(\eta,z)\abs{\mathcal{D}_\tau X_r(z)}\,\D z\D\eta \D y}_{\mathbb{H}}\D r\\
			&= \norm{f'}_\infty \int_\tau^s\norm[\bigg]{\int_{\Gamma}\int_\Lambda G_{\nu,t,r}(y,\eta) \abs{\mathcal{D}_\tau X_r(\eta)}\,\D\eta \D y}_{\mathbb{H}}\D r.
			\end{align*}
			Substituting back into \eqref{eq:aux_Gronwall_splitting} yields
			\begin{equation*}
				L(s)\le \sigma \norm{B}C_0\abs{\Gamma}^{1/2} + \norm{f'}_\infty\int_\tau^s L(r)\,\D r.
			\end{equation*}
			An application of Gronwall's inequality yields the claimed bound \eqref{eq:aux_Gronwall}.\\
			\textbf{Step 3} (Conclusion).
				The claim follows by substituting the bound \eqref{eq:aux_Gronwall} into \eqref{eq:aux_Gronwall_2}, and subsequently \eqref{eq:aux_Gronwall_2} and \eqref{eq:aux_Gronwall_1} into \eqref{eq:aux_Gronwall_0}.
		\end{proof}
		
		\begin{proof}[Proof of Lemma \ref{lem:CondExpectation_MalliavinIntegration}]
		Using that
			\begin{equation*}
				\int_0^t \EV[][][\bigg]{\bigg(\int_\tau^t\EV*{\kappa(t,s)\given \mathcal{F}_\tau}\,\D s\bigg)^2}\,\D\tau \le \int_0^t (t-\tau)\int_\tau^t \EV{\kappa(t,s)^2}\,\D s\D\tau\le t^2 \int_0^t \EV{\kappa(t,s)^2}\,\D s,
			\end{equation*}
			the claim follows from squaring and integrating the bound from Lemma \ref{lem:Lift}.
		\end{proof}

		\begin{proof}[Proof of Lemma \ref{lem:Occupationtime}]
		We first note that Lemma \ref{lem:BoundsforExpectations} with $g=\indicator_A$ implies that
		\begin{equation*}
			\mu(A) = \int_\Gamma \EV*{\indicator_A(X_t(y))}\,\D y \sim \norm{\indicator_A}_{L^1(\mathbb{R})} = \abs{A}.
		\end{equation*}
		To prove the claimed concentration, let $a_0$ be the lower and $a_1$ the upper boundary point of $A$. For $0<\epsilon\le 1$ define the approximation
		\begin{equation*}
			g^{(\epsilon)}(x) = (1 - \epsilon^{-1}(a_0-x))\indicator_{(a_0-\epsilon,a_0)}(x) + \indicator_{[a_0,a_1]}(x)+(1-\epsilon^{-1}(x-a_1))\indicator_{(a_1,a_1+\epsilon)}(x)
		\end{equation*}
		of $\indicator_A(x)$, $x\in \mathbb{R}$. In particular, $g^{(\epsilon)}$ has the properties
		\begin{enumerate}[(a)]
			\item \label{num:aux_Occu_a} $\norm{\indicator_A - g^{(\epsilon)}}_{L^1(\mathbb{R})} = \epsilon$ and
			\item \label{num:aux_Occu_b} $\norm{(g^{(\epsilon)})'}_{L^1(\mathbb{R})} = 2$, $\norm{(g^{(\epsilon)})'}_{\infty}<\infty$, $\epsilon>0$.
		\end{enumerate}
		 We write
		\begin{align*}
			\frac{M(A)}{\mu(A)} = \frac{\int_\Gamma \EV{g^{(\epsilon)}(X_t(y))}\,\D y}{\int_\Gamma \prob{X_t(y)\in A}\,\D y}\frac{\int_\Gamma \indicator_A(X_t(y))\,\D y}{\int_\Gamma g^{(\epsilon)}(X_t(y))\,\D y}\frac{\int_\Gamma g^{(\epsilon)}(X_t(y))\,\D y}{\int_\Gamma \EV{g^{(\epsilon)}(X_t(y))}\,\D y}
		\end{align*}
		and consider the three factors individually.	We apply first Lemma \ref{lem:BoundsforExpectations} and then Property (\ref{num:aux_Occu_a}) to obtain
		\begin{align*}
			 \abs[\bigg]{\int_\Gamma \EV{g^{(\epsilon)}(X_t(y))}\,\D y -\int_\Gamma \prob{X_t(y)\in A}\,\D y} &&\le\int_\Gamma \EV{\abs{g^{(\epsilon)}(X_t(y)) - \indicator_A(X_t(y))}}\,\D y\\
			 &\le \abs{\Gamma}p_{\max}t^{-\alpha/2}\norm{g^{(\epsilon)}-\indicator_A}_{L^1(\mathbb{R})} = \mathcal{O}(\epsilon),
		\end{align*}
		uniformly in $0<\nu\le \bar{\nu}$. Similarly, an application of Markov's inequality yields
		\begin{equation*}
			\prob*[][][\bigg]{\abs[\bigg]{\int_\Gamma \indicator_A(X_t(y))\,\D y - \int_\Gamma g^{(\epsilon)}(X_t(y))\,\D y}\ge K}\le \frac{\int_\Gamma \EV{\abs{\indicator_A(X_t(y)) - g^{(\epsilon)}(X_t(y))}}\,\D y}{K}\le \mathcal{O}(\epsilon)K^{-1},
		\end{equation*}
		for all $K>0$, uniformly in $0<\nu\le \bar{\nu}$. We have shown that
		\begin{equation*}
					\frac{\int_\Gamma \EV{g^{(\epsilon)}(X_t(y))}\,\D y}{\int_\Gamma \prob{X_t(y)\in A}\,\D y}\frac{\int_\Gamma \indicator_A(X_t(y))\,\D y}{\int_\Gamma g^{(\epsilon)}(X_t(y))\,\D y} = (1+\mathcal{O}(\epsilon))(1+\mathcal{O}_{\prob{}}(\epsilon)),\quad 0<\nu\le \bar{\nu}.
		\end{equation*}
		For the last factor, we apply first Chebychev's inequality, then Proposition \ref{prop:VarianceBound} and finally Property (\ref{num:aux_Occu_b}) to obtain
		\begin{equation*}
			\prob[][][\bigg]{\abs*{\frac{\int_\Gamma g^{(\epsilon)}(X_t(y))\,\D y}{\int_\Gamma \EV{g^{(\epsilon)}(X_t(y))}\,\D y}-1}\ge K} \le \frac{\operatorname{Var}\left(\int_\Gamma g^{(\epsilon)}(X_t(y))\,\D y\right)}{K^2\left(\int_\Gamma \EV{g^{(\epsilon)}(X_t(y))}\,\D y\right)^2}\lesssim \sigma^2\frac{\norm{(g^{(\epsilon)})'}_{L^1(\mathbb{R})}^2}{K^2\norm{g^{(\epsilon)}}_{L^1(\mathbb{R})}^2}\lesssim \sigma^2K^{-2}.
		\end{equation*}
		Consequently, we have the decomposition
		\begin{equation*}
			\frac{\int_\Gamma \indicator_A(X_t(y))\,\D y}{\int_\Gamma \prob{X_t(y)\in A}\,\D y} =(1+\mathcal{O}(\epsilon))(1+\mathcal{O}_{\prob{}}(\epsilon))(1+\mathcal{O}_{\prob{}}(\sigma)).
		\end{equation*}
		We conclude by letting $\epsilon\to 0$ and $\sigma=\sigma(\nu)\to 0$.
		\end{proof}

\section{Proofs for Section \ref{sec:GrowingObs}}\label{sec:Proofs_GrowingDomain}

		As in the small diffusivity regime considered in Sections \ref{sec:Balanced_MainResults} and \ref{sec:Density}, the core of the proofs for the growing observation window asymptotic is given by spatial ergodicity results. Since $Z_t(y)$, $y\in\mathbb{R}$, $0\le t\le T$, does not depend on any asymptotic quantity, the proofs are considerably simpler than their counterparts in Sections \ref{sec:Balanced_MainResults} and \ref{sec:Density}, where $X_t(y)$ depends on $\nu\to 0$. We only give the main ideas and leave out details since they are only slight variations of the arguments for the small diffusivity regime. The key insight is that -- except for the (statistical) lower bound in Proposition \ref{prop:LowerBound} and the upper bound in Lemma \ref{lem:UniformBoudnednessSolution} -- the proofs of the results in Sections \ref{sec:Balanced_MainResults} and \ref{sec:Density} do not require $\Lambda$ to be bounded. The statement of Lemma \ref{lem:UniformBoudnednessSolution} carries over to $\Lambda=\mathbb{R}$ is we assume that the initial condition satisfies $\xi\in C_b(\mathbb{R})$, where $C_b(\mathbb{R})$ is the space of all bounded continuous functions from $\mathbb{R}$ to $\mathbb{R}$.
		
		The following result matches the Gaussian bounds found in Theorem 1.1 of \citet{nualartGaussianEstimatesDensity2012a}, which requires stronger smoothness assumptions on $f$.				
		\begin{lemma}\label{lem:DensityBounds_GrowingDomains}
		There exist constants $0<c_1\le C_1<\infty$, $0<c_2\le C_2<\infty$ and $0<t_0\le T$ depending only $\norm{f'}_\infty$ and $T$, such that for all starting times $0\le \smash{\underbar{t}}<T$, deterministic initial conditions $\xi\in C(\Lambda)$, locations $y\in\mathbb{R}$ and time points $0<t\le T-\smash{\underbar{t}}$ the Lebesgue-density $p_{\smash{\underbar{t}},t,y}$ of $Z_{t}(y)$ under $\prob*[\smash{(\smash{\underbar{t}},\xi)}]{}$ exists and satisfies the bound
		\begin{equation*}
			p_{\smash{\underbar{t}},t,y}\left(x-\EV[\smash{(\smash{\underbar{t}},\xi)}]{Z_{t}(y)}\right)\le C_1 t^{-1/4}\exp\left(-\frac{x^2}{2C_2t^{1/2}}\right),\quad x\in\mathbb{R}.
		\end{equation*}
		If, moreover, $0<t\le t_0$, then
		\begin{equation*}
			p_{\smash{\underbar{t}},t,y}\left(x-\EV[\smash{(\smash{\underbar{t}},\xi)}]{Z_{t}(y)}\right)\ge c_1 t^{-1/4}\exp\left(-\frac{x^2}{2c_2t^{1/2}}\right),\quad x\in\mathbb{R}.
		\end{equation*}
		\end{lemma}
	\begin{proof}
		The proof is analogous to the proof of Proposition \ref{prop:DensityBounds} by replacing the Green function $G_{\nu,t,\smash{\underbar{t}}}(y,\eta)$ with the heat kernel $\phi_{t-\smash{\underbar{t}}}(y-\eta)$, $0\le \smash{\underbar{t}}<t\le T$, $y,\eta\in\mathbb{R}$, and recalling $C_0=1$ in this setting.
	\end{proof}

		\begin{corollary}\label{cor:DensityBounds_GrowingDomains}~
		\begin{enumerate}[(a)]
			\item There exists a constant $0<p_{\max}<\infty$, depending only on $\norm{f'}_\infty$ and $T$ such that for all starting times $0\le \smash{\underbar{t}} <T$, deterministic initial conditions $\xi\in C_b(\mathbb{R})$, locations $y\in\mathbb{R}$ and times $0<t \le T-\smash{\underbar{t}}$ the Lebesgue-density $p_{\smash{\underbar{t}},t,y}$ of $Z_{t}(y)$ under $\prob[\smash{(\underbar{t},\xi)}]{}$ exists and satisfies
			\begin{equation*}
				p_{\smash{\underbar{t}},t,y}(x)\le p_{\max}t^{-1/4}<\infty,\quad x\in\mathbb{R}.
			\end{equation*}
			\item Consider the starting configuration $\smash{\underbar{t}}=0$ and $\xi\in C_b(\mathbb{R})$ deterministic. For every fixed bounded subset $\mathcal{N}\subset\mathbb{R}$ and $0<\Delta<t_0$ with $0<t_0\le T$ sufficiently small there exists a constant $0<p_{\min,\norm{\xi}_\infty,\mathcal{N},\Delta}<\infty$, depending on $\norm{\xi}_\infty$, $\mathcal{N}$, $\Delta$, $\norm{f'}_\infty$ and $T$, such that
			\begin{equation*}
				p_{\smash{\underbar{t}},t,y}(x) \ge p_{\min,\norm{\xi}_\infty,\mathcal{N}, \Delta}>0,\quad x\in\mathcal{N},
			\end{equation*}
			for all $y\in\mathbb{R}$ and $\Delta \le t\le t_0$.
		\end{enumerate}
		\end{corollary}
		\begin{proof}
			The claimed upper bound follows directly from Lemma \ref{lem:DensityBounds_GrowingDomains}. The lower bound follows from Lemma \ref{lem:DetailsStochasticError_GrowingDomain} and the bound $\sup_{0\le t\le T-\smash{\underbar{t}}, y\in\mathbb{R}}\EV[(\smash{\underbar{t},\xi})]{\abs{Z_t(y)}}<\infty$ if $\norm{\xi}_\infty<\infty$ (analogously to Lemma  \ref{lem:UniformBoudnednessSolution}).
		\end{proof}
		Having established the necessary density bounds, we can proceed to prove concentration of functionals $\mathcal{G}_{g,\gamma}$
		for suitable functions $g\colon\mathbb{R}\to\mathbb{R}$ as $\gamma=\abs{\Gamma}\to\infty$, which are stated in Lemma \ref{lem:Ergodicity_GrowingDomains}.

		\begin{proof}[Proof of Lemma \ref{lem:Ergodicity_GrowingDomains}]
		The proof is analogous to the proofs of Proposition \ref{prop:VarianceBound}, Lemma \ref{lem:BoundsforExpectations} and Proposition \ref{prop:SpatailErgodicity}, with Corollary \ref{cor:DensityBounds} replaced by Corollary \ref{cor:DensityBounds_GrowingDomains}.
		\end{proof}
		Given the concentration results provided by Lemma \ref{lem:Ergodicity_GrowingDomains}, we can proceed to prove Theorem \ref{thm:Nonparametric_GrowingDomains}.
 Similarly to Lemma \ref{lem:DetailsStochasticError}, the first step is to control the auxiliary quantities which constitute the estimator $\hat{f}(x_0)_{h,\gamma}$ from \eqref{eq:Estimator}.
		\begin{lemma}\label{lem:DetailsStochasticError_GrowingDomain}
			Assume $\sqrt{\gamma} h\to\infty$, then
			\begin{enumerate}[(a)]
			\item \label{num:AuxLemma_a_GD} the auxiliary quantities $\mathcal{T}_{h,\gamma}^{\smash{+,1}}$ ,$\mathcal{T}_{h,\gamma}^{\smash{+,2}}$, $\mathcal{T}_{h,\gamma}^{\smash{-,1}}$ and $\mathcal{T}_{h,\gamma}^{\smash{-,2}}$ satisfy
			\begin{align*}
			 	\EV{\mathcal{T}_{h,\gamma}^{\smash{\pm,1}}}&\sim \gamma h,\quad \mathcal{T}_{h,\gamma}^{\smash{\pm,1}}=\EV{\mathcal{T}_{h,\gamma}^{\smash{\pm,1}}} + \smallo_{\prob{}}\big(\EV{\mathcal{T}_{h,\gamma}^{\smash{\pm,1}}}\big),\\
			 	\EV{\mathcal{T}_{h,\gamma}^{\smash{\pm,2}}}&\sim \gamma h^2,\quad \mathcal{T}_{h,\gamma}^{\smash{\pm,2}}=\EV{\mathcal{T}_{h,\gamma}^{\smash{\pm,2}}} + \smallo_{\prob{}}\big(\EV{\mathcal{T}_{h,\gamma}^{\smash{\pm,2}}}\big).
			 	\end{align*}
			\item\label{num:AuxLemma_b_GD} Furthermore, we find $\EV{\mathcal{T}_{h,\gamma}^{\smash{-,1}}}\EV{\mathcal{T}_{h,\gamma}^{\smash{+,2}}} + \EV{\mathcal{T}_{h,\gamma}^{\smash{+,1}}}\EV{\mathcal{T}_{h,\gamma}^{\smash{-,2}}}\sim \gamma^2 h^3$ and
			\begin{equation*}
				\mathcal{J}_{h,\gamma} = \EV{\mathcal{T}_{h,\gamma}^{\smash{-,1}}}\EV{\mathcal{T}_{h,\gamma}^{\smash{+,2}}} + \EV{\mathcal{T}_{h,\gamma}^{\smash{+,1}}}\EV{\mathcal{T}_{h,\gamma}^{\smash{-,2}}} + \smallo_{\prob{}}\big(\EV{\mathcal{T}_{h,\gamma}^{\smash{-,1}}}\EV{\mathcal{T}_{h,\gamma}^{\smash{+,2}}} + \EV{\mathcal{T}_{h,\gamma}^{\smash{+,1}}}\EV{\mathcal{T}_{h,\gamma}^{\smash{-,2}}}\big).
			\end{equation*}
			\item Moreover, we have $\EV{\mathcal{I}_{h,\gamma}^{\smash{\pm}}}\sim \gamma h$ and $\mathcal{I}_{h,\gamma}^{\smash{\pm}} = \EV{\mathcal{I}_{h,\gamma}^{\smash{\pm}}} + \smallo_{\prob{}}(\EV{\mathcal{I}_{h,\gamma}^{\smash{\pm}}})$.
			\end{enumerate}
		\end{lemma}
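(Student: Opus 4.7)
The plan is to mirror the proof of Lemma \ref{lem:DetailsStochasticError} almost verbatim, replacing each call to Proposition \ref{prop:VarianceBound}, Lemma \ref{lem:BoundsforExpectations} and Proposition \ref{prop:SpatailErgodicity} with the corresponding part of Lemma \ref{lem:Ergodicity_GrowingDomains}, and using Chebychev's inequality with the scaling $\sqrt{\gamma}h\to\infty$ rather than $\sigma=\smallo(h)$. Throughout I will use the rescaling identities from Remark \ref{rmk:Scaling}: $\norm{g_h}_{L^1(\mathbb{R})}=h\norm{g}_{L^1(\mathbb{R})}$ and $\norm{g_h'}_{L^1(\mathbb{R})}=\norm{g'}_{L^1(\mathbb{R})}$.

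First, I would establish (a). For $\mathcal{T}_{h,\gamma}^{\smash{\pm,1}}=\int_0^T\int_\Gamma K_{\pm,h}(Z_t(y))\,\D y\D t$, apply Lemma \ref{lem:Ergodicity_GrowingDomains} (b)--(c) with $g=K_{\pm}$ to obtain $\EV{\mathcal{T}_{h,\gamma}^{\smash{\pm,1}}}\sim \gamma h$; the variance bound in (a) applied to $g_h=K_{\pm,h}$ gives $\var(\mathcal{T}_{h,\gamma}^{\smash{\pm,1}})\lesssim \gamma\norm{K_{\pm}'}_{L^1(\mathbb{R})}^2\sim\gamma$. For $\mathcal{T}_{h,\gamma}^{\smash{\pm,2}}$, introduce $\tilde{g}_\pm(u)\coloneqq \pm K_\pm(u)\,u$, which is non--negative, locally Lipschitz and compactly supported. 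The identity $\pm K_{\pm,h}(x)(x-x_0)=h(\tilde{g}_\pm)_h(x)$ yields
\begin{equation*}
\mathcal{T}_{h,\gamma}^{\smash{\pm,2}}=h\int_0^T\int_\Gamma (\tilde{g}_\pm)_h(Z_t(y))\,\D y\D t,
\end{equation*}
so Lemma \ref{lem:Ergodicity_GrowingDomains} (b)--(c) gives $\EV{\mathcal{T}_{h,\gamma}^{\smash{\pm,2}}}\sim \gamma h^2$ and part (a) gives $\var(\mathcal{T}_{h,\gamma}^{\smash{\pm,2}})\lesssim h^2\gamma\,\norm{\tilde{g}_\pm'}_{L^1(\mathbb{R})}^2\lesssim \gamma h^2$. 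In both cases Chebychev's inequality provides the concentration: $\var / \EV^2 = \mathcal{O}((\gamma h^2)^{-1})\to 0$ under $\sqrt{\gamma}h\to\infty$.

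Part (b) then follows immediately: multiplying the asymptotic orders in (a) gives $\EV{\mathcal{T}_{h,\gamma}^{\smash{-,1}}}\EV{\mathcal{T}_{h,\gamma}^{\smash{+,2}}}+\EV{\mathcal{T}_{h,\gamma}^{\smash{+,1}}}\EV{\mathcal{T}_{h,\gamma}^{\smash{-,2}}}\sim \gamma^2 h^3$, and since each factor concentrates around its mean in relative error $\smallo_{\prob{}}(1)$, so does the bilinear combination $\mathcal{J}_{h,\gamma}$. Finally, for (c) recall that the noise is space--time white, so $\norm[]{\indicator_\Gamma K_{\pm,h}(Z_t)}_{\mathbb{H}}^2=\int_\Gamma K_{\pm,h}(Z_t(y))^2\,\D y$ and hence $\mathcal{I}_{h,\gamma}^{\smash{\pm}}=\int_0^T\int_\Gamma (K_\pm^2)_h(Z_t(y))\,\D y\D t$ with $g=K_\pm^2\ge 0$, compactly supported and Lipschitz. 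Lemma \ref{lem:Ergodicity_GrowingDomains} (b)--(c) gives $\EV{\mathcal{I}_{h,\gamma}^{\smash{\pm}}}\sim \gamma h$, part (a) gives $\var(\mathcal{I}_{h,\gamma}^{\smash{\pm}})\lesssim \gamma\,\norm{(K_\pm^2)'}_{L^1(\mathbb{R})}^2\lesssim\gamma$, and Chebychev under $\sqrt{\gamma}h\to\infty$ yields the desired concentration.

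No step poses a genuine obstacle: the structural work has already been done in Lemma \ref{lem:Ergodicity_GrowingDomains}. The only point that requires attention is keeping track of the correct powers of $h$ and $\gamma$ when applying the variance bound of Lemma \ref{lem:Ergodicity_GrowingDomains} (a) (which gives $\gamma^{-1}$ because it is stated for the normalised functional $\mathcal{G}_{g,\gamma}=\gamma^{-1}\int_0^T\int_\Gamma g(Z_t)\,\D y\D t$), and verifying that the $\norm{g'}_{L^1(\mathbb{R})}$--variant of the bound—rather than the coarser $\norm{g'}_\infty$--variant—is what closes the argument: the latter would only give $\var(\mathcal{T}_{h,\gamma}^{\smash{\pm,1}})\lesssim \gamma h^{-2}$, yielding a useless ratio $(h^4\gamma)^{-1}$. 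This is exactly the improvement highlighted in Remark \ref{rmk:Connection} and built into Lemma \ref{lem:Ergodicity_GrowingDomains} (a).
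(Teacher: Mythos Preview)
Your proposal is correct and follows exactly the route the paper indicates: the paper's own proof is the single sentence ``analogous to the proof of Lemma \ref{lem:DetailsStochasticError} with Lemma \ref{lem:Ergodicity_GrowingDomains} instead of Proposition \ref{prop:VarianceBound} and Lemma \ref{lem:BoundsforExpectations}'', and you have faithfully unpacked that, correctly tracking the $\gamma$--scaling from the normalised functional $\mathcal{G}_{g,\gamma}$ and the $h$--powers from Remark \ref{rmk:Scaling}. Your closing observation that the $\norm{g'}_{L^1}$--variant of the variance bound (rather than the $\norm{g'}_\infty$--variant) is essential is exactly the point of Remark \ref{rmk:Connection}.
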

		\begin{proof}
		The proof is analogous to the proof of Lemma \ref{lem:DetailsStochasticError} with Lemma \ref{lem:Ergodicity_GrowingDomains} instead of Proposition \ref{prop:VarianceBound} and Lemma \ref{lem:BoundsforExpectations}.
		\end{proof}
		\begin{proof}[Proof of Theorem \ref{thm:Nonparametric_GrowingDomains}]
			The proof is analogous to the proof of Theorem \ref{thm:Nonparametric} by using Lemma \ref{lem:DetailsStochasticError_GrowingDomain} instead of Lemma \ref{lem:DetailsStochasticError}.
		\end{proof}
		
	\section{Well-posedness results}
	
	This section contains the well-posedness results for the SPDE \eqref{eq:SPDE1} based on Assumption \ref{assump:WellPosedness} {\hyperref[assump:WellPosedness]{(well-posedness)}}.

		\begin{lemma}\label{lem:aux_WellPosedness}
			Grant Assumption \ref{assump:WellPosedness} \hyperref[assump:WellPosedness]{(well-posedness)}, fix any diffusivity level $0<\nu\le \bar{\nu}$, noise level $\sigma>0$ and initial configuration $(\smash{\underbar{t}},\xi)\in [0,T)\times C(\Lambda)$.
			\begin{enumerate}[(a)]
			\item\label{num:aux_wellposed_a} The semi-linear SPDE \eqref{eq:SPDE1} has a unique mild solution $X_t(y)$, $0\le t\le T$, $y\in\Lambda$, given by \eqref{eq:RandomField1}.
			\item\label{num:aux_wellposed_b} The Malliavin derivative $\mathcal{D}X_t(y)\in\mathfrak{H}$ of $X_t(y)$ from \eqref{eq:RandomField1} exists for all times $0< t\le T$, locations $y\in\Lambda$, and satisfies \eqref{eq:MalliavinDerivative}.
			\item\label{num:aux_wellposed_c} $X_t$ from \eqref{eq:RandomField1} is also an analytically weak solution to \eqref{eq:SPDE1} in the sense that
			\begin{equation*}
				 \iprod*{X_t}{\phi}-\iprod*{X_0}{\phi} = \int_0^t \left(\iprod*{X_s}{\nu A_s^\ast \phi} + \iprod*{F(X_s)}{\phi}\right)\,\D s + \sigma \iprod*{\phi}{\D W_s}
			\end{equation*}
			for all $\phi\in C_c^\infty(\Lambda)\subset \operatorname{dom}(A_0^\ast)$.
			\end{enumerate}
		\end{lemma}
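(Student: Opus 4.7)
The plan for part (\ref{num:aux_wellposed_a}) is a standard Banach fixed-point argument. I would let $\Phi$ denote the right-hand side of \eqref{eq:RandomField1}, viewed as a map on the space of jointly measurable, $(\mathscr F_t)$-adapted random fields with weighted norm $\|Y\|_\lambda^2 = \sup_{t,y}e^{-\lambda(t-\smash{\underbar t})}\EV{|Y_t(y)|^2}$. The Gaussian stochastic integral is well-defined and square-integrable by part (\ref{num:AssumpWellposedNess_c}) of Assumption \ref{assump:WellPosedness} \hyperref[assump:WellPosedness]{(Well--posedness)}; the initial-condition term is bounded in $L^\infty$ using $\|\mathcal{S}_{\nu,t,\smash{\underbar t}}\xi\|_\infty\le C_0\|\xi\|_\infty$; and the Lipschitz constant of $f$ together with $\|G_{\nu,t,s}(y,\cdot)\|_{L^1(\Lambda)}\le C_0$ controls the drift. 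A Cauchy--Schwarz estimate then yields $\|\Phi(Y)-\Phi(Y')\|_\lambda\le C(\lambda)\|Y-Y'\|_\lambda$ with $C(\lambda)<1$ for $\lambda$ sufficiently large, providing existence and uniqueness of a fixed point.

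For part (\ref{num:aux_wellposed_b}), I would carry the Malliavin derivative through the Picard iterates $X^{(n+1)}=\Phi(X^{(n)})$ which converge to $X$. The base iterate (Gaussian integral plus initial condition) belongs to $\mathbb{D}^{1,2}$ with $\mathcal{D}_\tau X^{(0)}_t(y)=\sigma G_{\nu,t,\tau}(y,\cdot)\in\mathbb{H}$, and the Lipschitz chain rule for the Malliavin derivative (Proposition 1.2.4 of \citep{nualartMalliavinCalculusRelated2006a}) propagates differentiability through the iteration, producing the recursion matching \eqref{eq:MalliavinDerivative} with $f'(X^{(n)}_s(\eta))$ interpreted as an adapted process a.s.\ bounded by $\|f'\|_\infty$. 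A Gronwall estimate provides a uniform $L^2(\Omega;\mathfrak H)$-bound on $\mathcal{D}X^{(n)}_t(y)$, and closability of $\mathcal{D}$ combined with $X^{(n)}_t(y)\to X_t(y)$ in $L^2(\Omega)$ implies $X_t(y)\in\mathbb{D}^{1,2}$ with $\mathcal{D}X^{(n)}_t(y)\rightharpoonup\mathcal{D}X_t(y)$ weakly. Passing to the limit in the recursion yields \eqref{eq:MalliavinDerivative}, and the vanishing $\mathcal{D}_\tau X_t(y)=0$ for $t\le\tau$ follows from adaptedness. The main obstacle here is the non-differentiability of $f$; this is resolved by the Lipschitz extension of the chain rule, which provides a measurable version of $f'$ bounded by $\|f'\|_\infty$ almost surely.

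For part (\ref{num:aux_wellposed_c}), I would test \eqref{eq:RandomField1} (with $\smash{\underbar t}=0$, $\xi=X_0$) against $\phi\in C_c^\infty(\Lambda)\subset\operatorname{dom}(A_0^\ast)$, yielding
\begin{equation*}
\iprod*{X_t}{\phi}=\iprod*{X_0}{\mathcal{S}_{\nu,t,0}^\ast\phi}+\int_0^t\iprod*{F(X_s)}{\mathcal{S}_{\nu,t,s}^\ast\phi}\,\D s+\sigma\int_0^t\iprod*{\mathcal{S}_{\nu,t,s}^\ast\phi}{\D W_s}.
\end{equation*}
Integrating the commutativity relation \eqref{eq:Commuting} in time provides $\mathcal{S}_{\nu,t,s}^\ast\phi=\phi+\int_s^t \nu A_r^\ast\mathcal{S}_{\nu,r,s}^\ast\phi\,\D r$ in $\mathcal{H}$, where the right-hand integrand is well-defined since $\phi\in\operatorname{dom}(A_0^\ast)$. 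Substituting this decomposition and interchanging the order of integration --- via classical Fubini for the drift term and the stochastic Fubini theorem for the noise term, both justified by the $L^2$-bound from part (\ref{num:AssumpWellposedNess_c}) of Assumption \ref{assump:WellPosedness} \hyperref[assump:WellPosedness]{(Well--posedness)} --- the accumulated $\int_0^t\D r$-contributions can be reassembled using \eqref{eq:RandomField1} once more (applied at time $r$) into $\int_0^t\iprod*{X_r}{\nu A_r^\ast\phi}\,\D r$. Rearranging terms yields \eqref{eq:WeakSolution}.
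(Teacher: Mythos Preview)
Your proposal is correct throughout. Parts (\ref{num:aux_wellposed_a}) and (\ref{num:aux_wellposed_b}) match the paper's argument closely: the paper runs an explicit Picard iteration with a Gronwall step (your weighted-norm contraction is the usual equivalent reformulation), and for the Malliavin derivative the paper also propagates $\mathcal{D}$ through the iterates via Proposition~1.2.4 of \citep{nualartMalliavinCalculusRelated2006a}, bounds $\sup_n\sup_{t,y}\EV{\|\mathcal{D}X_t^{(n)}(y)\|_{\mathfrak H}^2}$ by Gronwall, and passes to the limit using Lemma~1.2.3 of \citep{nualartMalliavinCalculusRelated2006a}.

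For part (\ref{num:aux_wellposed_c}) you take a genuinely different route. The paper proceeds by a partition argument: it telescopes $\iprod{X_t}{\phi}-\iprod{\xi}{\phi}$ over a grid $t_i=it/n$, writes each increment via the mild equation on $[t_i,t_{i+1}]$, expands $\iprod{X_{t_i}}{\mathcal{S}^\ast_{t_{i+1},t_i}\phi}-\iprod{X_{t_i}}{\phi}$ using \eqref{eq:Commuting}, and lets $n\to\infty$ with $\mathcal{S}^\ast_{r+\Delta,r}\to\operatorname{Id}$ and path continuity. Your approach instead substitutes the integrated form $\mathcal{S}_{\nu,t,s}^\ast\phi=\phi+\int_s^t\nu A_r^\ast\mathcal{S}_{\nu,r,s}^\ast\phi\,\D r$ directly into the mild formula and swaps integrals via (stochastic) Fubini before recognising $\iprod{X_r}{\nu A_r^\ast\phi}$. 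Your argument is cleaner and avoids the limiting step, at the cost of needing stochastic Fubini; note that the integrability this requires is not quite Assumption~\ref{assump:WellPosedness}(\ref{num:AssumpWellposedNess_c}) but rather a bound on $\|\mathcal{S}_{\nu,r,s}^\ast\nu A_r^\ast\phi\|_{\mathbb H}\le\|B\|C_0\|\nu A_r^\ast\phi\|$, which follows from Assumption~\ref{assump:WellPosedness}(\ref{num:AssumpWellposedness}) once $\sup_r\|A_r^\ast\phi\|<\infty$ is granted for $\phi\in C_c^\infty(\Lambda)$. The paper's Riemann-sum approach sidesteps this technicality entirely.
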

		\begin{proof}
		Since the diffusivity $\nu>0$ and the noise level $\sigma>0$ are fixed, we set $\nu=\sigma=1$ without loss of generality and write $G_{\smash{\underbar{t}},t}$ instead of $G_{\nu,\smash{\underbar{t}},t}$.
		\begin{enumerate}[(a)]
			\item The proof follows the steps of Theorem 2.4.3 of \citet{nualartMalliavinCalculusRelated2006}, taking into account that $A_t$ is time-dependent. Consider the standard Picard iteration scheme with
			\begin{equation*}
				X_t^{(0)}(y) = \int_\Lambda G_{\smash{\underbar{t}}+t,\smash{\underbar{t}}}(y,\eta)\xi(\eta)\,\D\eta+\int_0^t \int_\Lambda G_{\smash{\underbar{t}}+t,\smash{\underbar{t}}+s}(y,\eta)\mathcal{W}(\D\eta,\D s)
			\end{equation*}
			for $0\le t\le T-\smash{\underbar{t}}$ and $y\in\Lambda$, where the stochastic integral is well-defined by Assumption \ref{assump:WellPosedness} \hyperref[assump:WellPosedness]{(well-posedness)} (\ref{num:AssumpWellposedNess_c}), and
			\begin{equation}
				X_t^{(n+1)}(y) = X_t^{(0)}(y) + \int_0^t\int_\Lambda G_{\smash{\underbar{t}}+t,\smash{\underbar{t}}+s}(y,\eta)f(X_s^{(n)}(\eta))\,\D\eta\D s\label{eq:Picard}
			\end{equation}
			for $0\le t\le T-\smash{\underbar{t}}$, $y\in\Lambda$ and $n\in\mathbb{N}_0 $. For every $p\ge 1$ we find
			\begin{equation*}
				 \EV*{\abs*{X_t^{(n+1)}(y)-X_t^{(n)}(y)}^p}\le \norm{f'}_\infty^p \EV[][][\bigg]{\bigg(\int_0^t \int_\Lambda G_{\smash{\underbar{t}}+t,\smash{\underbar{t}}+s}(y,\eta)\abs*{X_s^{(n)}(\eta)-X_s^{(n-1)}(\eta)}\,\D\eta\D s\bigg)^p}.
			\end{equation*}
			Using that $G_{\smash{\underbar{t}}+t,\smash{\underbar{t}}+s}(y,\eta)\,\D\eta$ is a measure with mass of at most $C_0$ by Assumption \ref{assump:WellPosedness} \hyperref[assump:WellPosedness]{(well-posedness)} (\ref{num:AssumpWellposedness}), we apply Jensen's inequality to find for all $0\le t\le T-\smash{\underbar{t}}$ the bounds
			\begin{align*}
				 V_n(t)&\coloneqq \sup_{y\in\Lambda}\EV*{\abs*{X_t^{(n+1)}(y)-X_t^{(n)}(y)}^p}\\
				 &\le (C_0t)^{p-1}\norm{f'}_\infty^p \sup_{y\in\Lambda}\EV[][][\bigg]{\int_0^t \int_\Lambda G_{\smash{\underbar{t}}+t,\smash{\underbar{t}}+s}(y,\eta)\abs*{X_s^{(n)}(\eta)-X_s^{(n-1)}(\eta)}^p\,\D\eta\D s}\\
				 &\le (C_0t)^{p-1}\norm{f'}_\infty^p \sup_{y\in\Lambda}\EV[][][\bigg]{\int_0^t \sup_{z\in\Lambda}\abs*{X_s^{(n)}(z)-X_s^{(n-1)}(z)}^p \int_\Lambda G_{\smash{\underbar{t}}+t,\smash{\underbar{t}}+s}(y,\eta)\,\D\eta\D s}\\
				 &\le C_0^p t^{p-1}\norm{f'}_\infty^p \int_0^t V_{n-1}(s)\,\D s.
			\end{align*}
			Gronwall's inequality in the form of Lemma 6.2 of \citet{SanzSole2005} with $k_1=k_2=0$ shows that
			\begin{equation*}
				\sup_{y\in\Lambda, 0\le t\le T} \EV[][][\bigg]{\abs[\bigg]{X_t^{(n+1)}(y)-X_t^{(n)}(y)}^p} = \sup_{0\le t\le T}V_n(t) \to 0
\end{equation*}
as $n\to\infty$. This yields convergence of $X^{(n)}$ in the Banach space
\begin{equation*}
	\Set*{u\colon \Lambda \times [0,T]\to \mathbb{R},\quad (y,t)\mapsto u_t(y)\given \tnorm{u}\coloneq \sup_{y\in\Lambda,0\le t\le T}\EV{\abs{u_t(y)}^p}^{1/p}<\infty}
\end{equation*}
to an adapted and unique stochastic process $(X_t(y))_{t\in [0,T]}$ satisfying \eqref{eq:RandomField1}. 
	\item The proof is analogous to the proof of Proposition 2.4.4 of \citet{nualartMalliavinCalculusRelated2006} and of Lemma 7.3 of \citet{SanzSole2005}, taking into account that $A_t$ is time-dependent. Consider the Picard iteration of \eqref{eq:Picard}. Using Assumption \ref{assump:WellPosedness} \hyperref[assump:WellPosedness]{(well-posedness)} (\ref{num:AssumpWellposedNess_c}), we find
			\begin{equation*}
				\sup_{0\le t\le T-\smash{\underbar{t}},y\in\Lambda}\EV{\norm{\mathcal{D}X_t^{(0)}(y)}_{\mathfrak{H}}^2} = \sup_{0\le t\le T-\smash{\underbar{t}},y\in\Lambda}\int_0^t \norm*{G_{\smash{\underbar{t}}+t,\smash{\underbar{t}}+\tau}(y,\MTemptyplaceholder)}_{\mathbb{H}}^2\,\D \tau \le C<\infty,
			\end{equation*}
			for some constant $0<C<\infty$. Assuming that $X_t^{(n)}(y)$ is Malliavin-differentiable, Proposition 1.2.4 of \citet{nualartMalliavinCalculusRelated2006} implies that $\mathcal{D}f(X_t^{(n)}(y))) = G_t(y)\mathcal{D}X_t^{(n)}(y)$ for a random variable $H_t(y)$ bounded by $\norm{f'}_\infty$ and $H_t(y)=f'(X_t^{(n)}(y))$ if $f$ is continuously differentiable. We write $\mathcal{D}f(X_t^{(n)}(y)) = f'(X_t^{(n)}(y))\mathcal{D}X_t^{(n)}(y)$ for notational convenience in both cases.
			Using Jensen's inequality for the finite measure $G_{\smash{\underbar{t}}+s,\smash{\underbar{t}}+r}(y,\eta)\,\D\eta\D r$ we compute for $0\le t\le T-\smash{\underbar{t}}$ and $n\in\mathbb{N}_0$ the bounds
			\begin{align*}
				V_{n+1}(t)&\coloneqq \sup_{0\le s\le t,y\in\Lambda}\EV[][][\bigg]{ \norm*{\mathcal{D} X_s^{(n)}(y)}_{\mathfrak{H}}^2}\\
				&= \sup_{0\le s\le t,y\in\Lambda}\EV[][][\bigg]{\norm[\bigg]{\mathcal{D} X_s^{(0)}(y) + \int_0^s\int_\Lambda G_{\smash{\underbar{t}}+s,\smash{\underbar{t}}+r}(y,\eta)f'(X_r^{(n)}(\eta))\mathcal{D} X_r^{(n)}(\eta)\,\D\eta\D r}_{\mathfrak{H}}^2}\\
				&\le 2 C + 2 \sup_{0\le s\le t,y\in\Lambda}\EV[][][\bigg]{\norm[\bigg]{\int_0^s\int_\Lambda G_{\smash{\underbar{t}}+s,\smash{\underbar{t}}+r}(y,\eta)f'(X_r^{(n)}(\eta))\mathcal{D} X_r^{(n)}(\eta)\,\D\eta\D r}_{\mathfrak{H}}^2}\\
				&\le 2 C + 2 \sup_{0\le s\le t,y\in\Lambda}sC_0\EV[][][\bigg]{\int_0^s\int_\Lambda G_{\smash{\underbar{t}}+s,\smash{\underbar{t}}+r}(y,\eta)\abs*{f'(X_r^{(n)}(\eta))}^2\norm*{\mathcal{D} X_r^{(n)}(\eta)}_{\mathfrak{H}}^2\,\D\eta\D r}\\
				&\le 2 C + 2 tC_0\norm{f'}_\infty^2\sup_{0\le s\le t,y\in\Lambda}\int_0^s \sup_{z\in\Lambda}\EV*{\norm*{\mathcal{D} X_r^{(n)}(z)}_{\mathfrak{H}}^2}\int_\Lambda G_{\smash{\underbar{t}}+s,\smash{\underbar{t}}+r}(y,\eta)\,\D\eta\D r\\
				&\le 2 C + 2 (T-\smash{\underbar{t}})C_0^2\norm{f'}_\infty^2\sup_{0\le s\le t}\int_0^s \sup_{0\le u\le r,z\in\Lambda}\EV*{\norm*{\mathcal{D} X_u^{(n)}(z)}_{\mathfrak{H}}^2}\D r\\
				&= 2 C + 2 (T-\smash{\underbar{t}})C_0^2\norm{f'}_\infty^2 \int_0^t V_n(s)\,\D s.
			\end{align*}
			Applying the Gronwall inequality (Lemma 6.2 of \citet{SanzSole2005}) with $k_1 = 2 C$ and $k_2 = 0$, we find $V_n(t)<\infty$, uniformly in $0\le t\le T-\smash{\underbar{t}}$, $y\in\Lambda$ and $n\in\mathbb{N}_0$. We conclude as in the proof of Proposition 2.4.4 of \citet{nualartMalliavinCalculusRelated2006}: Since $X_t^{(n)}(y)$ converges to $X_t(y)$ in $L^p(\Omega)$ for all $p\ge 1$, the Malliavin derivative $\mathcal{D}X_t(y)$ of $X_t(y)$ from \eqref{eq:RandomField1} exists. Even more, we find that $\mathcal{D}X_t^{(n)}(y)$ converges weakly to $\mathcal{D}X_t(y)$ in $L^2(\Omega,\mathfrak{H})$ by Lemma 1.2.3 of \citet{nualartMalliavinCalculusRelated2006} and the claim follows from applying $\mathcal{D}$ to \eqref{eq:RandomField1}.
			\item We follow the proofs of Proposition 3.2 of \citet{pardouxStochasticPartialDifferential2021} and of Theorem 2.1 of \citet{Ibragimov1999}. Take any $\phi\in C_c^\infty(\Lambda)\subset \operatorname{dom}(A_0^\ast)$, and $0\le s<t\le T-\smash{\underbar{t}}$. Using the stochastic Fubini Theorem for the stochastic integral, we find
			\begin{align*}
				\iprod*{X_t}{\phi}&= \iprod*{\semigroup_{\smash{\underbar{t}}+t,\smash{\underbar{t}}+s}X_s}{\phi} + \iprod[\bigg]{\int_s^t \semigroup_{\smash{\underbar{t}+t},\smash{\underbar{t}}+r}F(X_r)\, \D r}{\phi}+ \iprod[\bigg]{\int_s^t \semigroup_{\smash{\underbar{t}}+t,\smash{\underbar{t}}+r}\,\D W_r}{\phi}\\
				&= \iprod*{X_s}{\semigroup_{\smash{\underbar{t}}+t,\smash{\underbar{t}}+s}^\ast \phi}+ \int_s^t \iprod*{\semigroup_{\smash{\underbar{t}+t},\smash{\underbar{t}}+r}^\ast \phi}{F(X_r)}\, \D r+ \int_s^t \iprod*{\semigroup_{\smash{\underbar{t}}+t,\smash{\underbar{t}}+r}^\ast\phi}{\D W_r}.
			\end{align*}
			For fixed $n\in\mathbb{N}$ let $t_i = it/n$, $i=0,\dots ,n$. Using \eqref{eq:Commuting} we find
			\begin{align*}
				\iprod*{X_{t_i}}{\semigroup_{\smash{\underbar{t}}+t_{i+1},\smash{\underbar{t}}+t_i}^\ast\phi}-\iprod*{X_{t_{i}}}{\phi} &= \int_{t_i}^{t_{i+1}} \frac{\partial}{\partial r} \iprod*{X_{t_i}}{\semigroup_{\smash{\underbar{t}}+r,\smash{\underbar{t}}+t_i}^\ast \phi}\,\D r= \int_{t_i}^{t_{i+1}} \iprod[\bigg]{X_{t_i}}{ \frac{\partial}{\partial r}\semigroup_{\smash{\underbar{t}}+r,\smash{\underbar{t}}+t_i}^\ast\phi}\,\D r\\
				&=\int_{t_i}^{t_{i+1}}  \iprod*{X_{t_i}}{A_r^\ast \semigroup_{\smash{\underbar{t}}+r,\smash{\underbar{t}}+t_i}^\ast\phi}\,\D r=\int_{t_i}^{t_{i+1}}  \iprod*{X_{t_i}}{\semigroup_{\smash{\underbar{t}}+r,\smash{\underbar{t}}+t_i}^\ast A_r^\ast\phi}\,\D r.
			\end{align*}
			This implies
			\begin{align*}
				\iprod*{X_t}{\phi} - \iprod*{\xi}{\phi}&= \sum_{i=0}^{n-1}\left(\iprod*{X_{t_{i+1}}}{\phi} -\iprod*{X_{t_{i}}}{\phi} \right)\\
				&= \sum_{i=0}^{n-1} \left(\iprod*{X_{t_{i+1}}}{\phi} -\iprod*{X_{t_i}}{\semigroup_{\smash{\underbar{t}}+t_{i+1},\smash{\underbar{t}}+t_i}^\ast\phi}+\iprod*{X_{t_i}}{\semigroup_{\smash{\underbar{t}}+t_{i+1},\smash{\underbar{t}}+t_i}^\ast\phi}-\iprod*{X_{t_{i}}}{\phi} \right)\\
				&= \sum_{i=0}^{n-1}\bigg(\int_{t_i}^{t_{i+1}} \iprod*{\semigroup_{\smash{\underbar{t}+t_{i+1}},\smash{\underbar{t}}+r}^\ast\phi}{F(X_r)}\,\D r+ \int_{t_i}^{t_{i+1}} \iprod*{\semigroup_{\smash{\underbar{t}}+t_{i+1},\smash{\underbar{t}}+r}^\ast \phi}{\D W_r}\\
				&\quad + \int_{t_i}^{t_{i+1}}  \iprod*{X_{t_i}}{\semigroup_{\smash{\underbar{t}}+r,\smash{\underbar{t}}+t_i}^\ast A_r^\ast \phi}\,\D r\bigg).
			\end{align*}
			Since $\semigroup_{{r}+\Delta,r}$ converges to the identity operator on $\mathcal{H}$ for all $0\le r\le T$ as $\Delta\to 0$ and $X$ has almost surely continuous paths, the claim follows by letting $n\to\infty$.\qedhere
			\end{enumerate}
		\end{proof}
		\begin{remark}
			Note that Lemma \ref{lem:aux_WellPosedness} also applies to $\Lambda=\mathbb{R}$ if the bounds from Assumption \ref{assump:WellPosedness} hold true and $\xi\in C_b(\mathbb{R})$.
		\end{remark}

\printbibliography

\end{document}